\documentclass[10pt]{article}
\makeatletter
\DeclareOldFontCommand{\rm}{\normalfont\rmfamily}{\mathrm}
\DeclareOldFontCommand{\sf}{\normalfont\sffamily}{\mathsf}
\DeclareOldFontCommand{\tt}{\normalfont\ttfamily}{\mathtt}
\DeclareOldFontCommand{\bf}{\normalfont\bfseries}{\mathbf}
\DeclareOldFontCommand{\it}{\normalfont\itshape}{\mathit}
\DeclareOldFontCommand{\sl}{\normalfont\slshape}{\@nomath\sl}
\DeclareOldFontCommand{\sc}{\normalfont\scshape}{\@nomath\sc}
\makeatother

\usepackage{typearea}
\typearea{10}
\usepackage{amsmath,amsfonts,amsthm,amssymb}
\usepackage{color}
\usepackage{algorithm}
\usepackage{algcompatible}
\usepackage{framed}
\usepackage{arydshln}
\usepackage{comment}
\usepackage{url}
\usepackage{subcaption}


\newtheorem{theorem}{Theorem}[section]
\newtheorem{proposition}{Proposition}[section]
\newtheorem{lemma}{Lemma}[section]
\newtheorem{corollary}{Corollary}[section]

\theoremstyle{definition}
\newtheorem{definition}[theorem]{Definition}
\newtheorem{remark}[theorem]{Remark}

\numberwithin{equation}{section}

\newcommand{\R}{\mathbb{R}}
\newcommand{\prox}[1]{\mbox{\rm{prox}}_{#1}}
\newcommand{\phic}{\frac{\phi}{c}}

\newcommand{\AL}[3]{L_{{#3}}({#1},{#2})}

\title{The Proximal Method of Multipliers for a Class of Nonsmooth Convex Optimization
}
\author{
T.~Takeuchi\thanks{Graduate School of Mathematical Sciences, The University of Tokyo, Tokyo, Japan. (take@ms.u-tokyo.ac.jp)}
}
\date{}

\begin{document}
\maketitle

\begin{abstract}
This paper develops the proximal method of multipliers for a class of nonsmooth convex optimization.
The method generates a sequence of minimization problems (subproblems).
We show that the sequence of approximations to the solutions of the subproblems converges to a saddle point of the Lagrangian even if the original optimization problem may possess multiple solutions.

The augmented Lagrangian due to Fortin appears in the subproblem. The
remarkable property of the augmented Lagrangian over the standard
Lagrangian is that it is always differentiable, and it is often semismoothly differentiable. This fact allows us to employ a nonsmooth Newton method for computing an approximation to the subproblem. The proximal term serves as the regularization of the objective function and guarantees the solvability of the Newton system without assuming strong convexity on the objective function.
We exploit the theory of the nonsmooth Newton method to provide a rigorous proof for the global convergence of the proposed algorithm.
\\
\textbf{Keywords} nonsmooth convex optimization, augmented Lagrangian, Newton method, proximal point algorithm\\
\textbf{Mathematics Subject Classification (2010)} 90C25, 46N10, 49M15
\end{abstract}
{\footnotesize}

\section{Introduction}
This article develops the proximal method of multipliers for solving
a class of nonsmooth convex optimization
 \begin{equation}\label{min}
   \min_{x\in \R^n} f(x) +\phi(Ex). \tag{$\mathcal{P}$}
 \end{equation}
Throughout this article, we assume that
$f \colon \R^n \rightarrow \R$ is twice continuously differentiable and (not necessarily strongly) convex,
$\phi \colon \R^m \rightarrow \R\cup \{+\infty\}$ is a proper closed convex function, and $E \colon \R^n \rightarrow \R^m$ is a linear operator.
Accurate numerical solvers for nonsmooth convex optimization have
received considerable interest. Among the existing methods, the
alternating direction method of multipliers (ADMM)
\cite{Boyd+ParikhETAL-DistOptiStatLear:11,Eckstein+Bertsekas-Dougsplimethprox:92a},
the augmented Lagrangian method (ALM)
\cite{Ito+Kunisch-AugmLagrmethnons:00, Fortin+Glowinski-AugmLagrmeth:83} and its variant
\cite{Tomioka+Sugiyama-DualLagrMethEffi:09}, and the primal dual
active set (PDAS) method
\cite{Hintermueller+ItoETAL-primactistrasemi:02,Ito+Kunisch-actistrabaseaugm:99}
are predominant approaches. Most of these methods, however, are
designed for objective functions possessing strong convexity, which is
rather restrictive in practical applications. Rockafellar proposed in
\cite{Rockafellar-MonooperaugmLagr:78} the proximal method of
multipliers, which is a variant of the augmented Lagrangian method
that works without the assumption of strong convexity on the objective function. The method is designed for the inequality-constrained problem
\begin{equation}\label{min_ineq}
  \min_{x\in C} f(x) \quad \mbox{satisfying}\quad g_1(x)\le 0,\ldots,g_m(x)\le 0.
\end{equation}
Here, $C$ is a nonempty closed convex subset of $\R^n$, and $f\colon C\longrightarrow \R$ and $g_i \colon C\longrightarrow \R$ for $i=1,\ldots,m$ are lower semicontinuous convex functions. The proximal method of multipliers is an iteration scheme that generates, for any initial pair $(x_0,\lambda_0)$ and the sequence $\{c_k\}$ of nondecreasing positive real numbers, a sequence $(x_k,\lambda_k)$ by
\begin{align}\label{ppm}
\left\{\begin{array}{rl}
\displaystyle  x_{k+1} & \displaystyle \approx \arg\min_{x\in C}\left( \AL{x}{\lambda_k}{c_k} + 1/(2c_k)\Vert x - x_k\Vert^2\right),\\[5pt]
\displaystyle  \lambda_{k+1} & \displaystyle  =\lambda_k + c_k\nabla_\lambda \AL{x_{k+1}}{\lambda_k}{c_k}\\[5pt]
&= \displaystyle \lambda_k + c_k(g(x_{k+1})-\min(0,g(x_{k+1})+\lambda_k/c_k))\\[5pt]
&= \displaystyle\max(0, \lambda_k + c_k g(x_{k+1})).
\end{array}\right.
\end{align}
Here, $g(x)=(g_1(x),\ldots,g_m(x))^\top \in \R^m$ and $\AL{x}{\lambda}{c}$ is the augmented Lagrangian for the inequality-constrained problem defined by
\begin{equation}\label{ag_ineq}
  \AL{x}{\lambda}{c} =f(x)+ \frac{1}{2c}\left(\Vert \max( \lambda+cg(x),0)\Vert^2-\Vert\lambda\Vert^2\right).
\end{equation}
The precise meaning of the approximation ``$\approx$'' is specified depending on the criterion, and
the following one was treated in \cite{Rockafellar-MonooperaugmLagr:78}:
\begin{equation*}
\mbox{dist}(0,\partial \phi_k(x_{k+1})) \le \epsilon_k/c_k\min(1, \Vert (   x_{k+1}-x_k, \lambda_{k+1}-\lambda_k)\Vert^r), \qquad \sum_{k=0}^\infty \epsilon_k < \infty.
\end{equation*}
Here, $r$ is either 0 or 1, and $\phi_k(x) = \AL{x}{\lambda_k}{c_k} + 1/(2c_k)\Vert x - x_k\Vert^2$.
Based on the theory of the proximal point algorithm for maximal monotone operators developed in \cite{Rockafellar-Monooperproxpoin:76a}, Rockafellar derived the proximal method of multipliers \eqref{ppm} and established the convergence of the algorithm \cite[Thm.~7]{Rockafellar-MonooperaugmLagr:78}.
The advantage over the method of multipliers (see the Appendix for the
method of multipliers) is that, as mentioned briefly above, even when
the strong convexity of the objective function $f$ is absent and the
primal and dual problems may posses multiple solutions, the
sequences $\{x_k\}$ and $\{\lambda_k\}$ both converge to an optimal
solution of the primal problem and of the dual problem, respectively.

\noindent
\textbf{Application to nonsmooth convex optimization}

It is reasonable to expect that this framework carries over to the problem \eqref{min}.
In fact, Rockafellar noted in the concluding remarks of the article
\cite{Rockafellar-MonooperaugmLagr:78} that the algorithm can be
transferred to more general problems beyond the
inequality-constrained problem and provided brief guidelines on how one may accomplish this task.
In this paper, we follow Rockafellar's remark to obtain the proximal method of multipliers (see Algorithm~\ref{alg:1}) for the problem \eqref{min}, where the minimization problem of the augmented Lagrangian $\AL{x}{\lambda}{c}$ defined by
\begin{equation}\label{ag}
 \AL{x}{\lambda}{c} =f(x) + \phi_c(E x + \lambda/c) -\tfrac{1}{2c}\|\lambda\|^2
\end{equation}
arises naturally. Here, $\phi_c$ is the Moreau envelope defined in section~\ref{sec:moreau}.
The remarkable property of the augmented Lagrangian \eqref{ag} is that it is differentiable and the gradient is Lipschitz continuous, and hence, one can define the (generalized) Jacobian. These facts allow us to design a nonsmooth Newton method for solving the subproblem for $x_{k+1}$ in \eqref{ppm_ag2}.

\noindent
\textbf{Contributions}

This paper studies the proximal method of multipliers for the problem \eqref{min} and Newton's method for solving the sequence of subproblems. Our contributions are summarized as follows:
\begin{enumerate}
  \item We perform Rockafellar's minmax application of the proximal point algorithm to a maximal monotone operator associated with the ordinary Lagrangian of the problem \eqref{min} to derive the proximal method of multipliers. This work aims to complete the aforementioned remark noted by Rockafellar in \cite{Rockafellar-MonooperaugmLagr:78}.
  \item The proximal method of multipliers requires an optimization method for computing an approximation $x_{k+1}$ of the inner minimization problem in \eqref{ppm_ag2}. We employ a nonsmooth Newton method \cite{Pang+Qi-Nonsequa:93} with a line-search procedure to compute the approximation (see Algorithm~\ref{alg:2}).
      The proximal term $1/(2c_k)\Vert x - x_k\Vert^2$ plays several roles in the inner minimization: the quadratic term makes the objective function strongly convex. Consequently, the unique solvability of the problem is ensured. The term also positively affects the local convergence of the Newton method, i.e., the solution of the inner problem will be located in the vicinity of the previous iterate $x_k$, and hence, by starting the inner iteration with $x_k$ as the initial point, the unit step length in the line-search procedure tends to be chosen after a few iterations, and thus, the Newton iteration exhibits fast local convergence.
      This phenomenon was observed in our numerical experiments. We present theoretical evidence for the observation, that is, we show that the sequence generated by Algorithm~\ref{alg:2} globally converges to the unique minimizer of the subproblem at least quotient superlinearly.
\end{enumerate}

The augmented Lagrangian \eqref{ag} first appeared in the article of Fortin~\cite{Fortin-Minisomenon-func:75}, where he proposed a method of multipliers, also referred to as the augmented Lagrangian method:
\begin{align}\label{mm_ag}
\left\{
\begin{array}{l}
\displaystyle x_{k+1} = \arg\min_{x} \AL{x}{\lambda_k}{c},   \\[5pt]
\displaystyle \lambda_{k+1}  = \lambda_k + c(Ex_{k+1}-\mbox{prox}_{\phi/c}(Ex_{k+1}+\lambda_k/c)).
\end{array}\right.
\end{align}
(cf. section~\ref{sec:moreau} for the definition of $\prox{\phic}$.)  However, the article \cite{Fortin-Minisomenon-func:75} has attracted little attention since its appearance.
After more than two decades since his work, the same approach was proposed (rediscovered) by Ito and Kunisch~\cite{Ito+Kunisch-AugmLagrmethnons:00}.
An iterative algorithm that is slightly different in appearance but essentially identical to the augmented Lagrangian method \eqref{mm_ag} was also proposed by Tomioka and Sugiyama in the context of sparsity-regularized minimization problems \cite{Tomioka+Sugiyama-DualLagrMethEffi:09,Tomioka+SuzukiETAL-SupeConvDualAugm:11}.

There are a number of references on the augmented Lagrangian method for convex programming with equality-inequality constraints (e.g., \cite{MR690767,Birgin+Martinez-PracaugmLagrmeth:14}) and on ADMM for \eqref{min}
  (e.g., \cite{Boyd+ParikhETAL-DistOptiStatLear:11,Eckstein+Bertsekas-Dougsplimethprox:92a}). In contrast, to the
best of our knowledge, there are few articles available regarding the augmented Lagrangian method (ALM) developed by Fortin \cite{Fortin-Minisomenon-func:75}  for the problem \eqref{min}.
  To fill this gap, we provide a brief overview of the augmented
  Lagrangian method. Our focus is twofold: the first is to excavate
  the work by Fortin, which has been buried and has been scarcely referred
  to in the subsequence literature since its appearance, and the second is to clarify the difference between ADMM and ALM.
  %
 We also present some previous works closely related to the method,
 namely, the dual augmented Lagrangian (DAL) method
 \cite{Tomioka+Sugiyama-DualLagrMethEffi:09,Tomioka+SuzukiETAL-SupeConvDualAugm:11} and the forward-backward
 Newton method \cite{parikh2013proximal,Patrinos+StellaETAL-ForwtrunNewtmeth:14}. These
 are placed in the Appendix.

The remainder of this paper is organized as follows.
In section 2, we present a brief review of proximal mappings and the augmented Lagrangian, which are the basic building blocks for designing the proximal method of multipliers.
In section 3, we derive the proximal method of multipliers for the problem \eqref{min} from the general proximal point algorithm.
In section 4, we describe a nonsmooth Newton method for solving the inner problem \eqref{ppm_ag2}, and
we show that the algorithm is globally convergent at least quotient superlinearly.
We also illustrate the implementation of the algorithm.
\section{Preliminaries}
For a twice continuously differentiable function $f$, its gradient is
denoted by $\nabla f(x)$, and its Hessian is denoted by $\nabla^2 f(x)$.
A convex function $\phi$ is said to be proper if at least one $x_0$
exists such that $\phi(x_0)<+\infty$ and $\phi(x)>-\infty$ for all $x$. The set of proper, lower semicontinuous, convex functions defined on a space ${\R^n}$ is denoted by $\Gamma_0({\R^n})$. The effective domain
of a function $\phi\in\Gamma_0({\R^n})$ is denoted by $\mbox{dom}(\phi)=\{z\in {\R^n}\mid \phi(z)\mbox{ is finite}\}$, and
it is always assumed to be nonempty.
For a function $\phi\in \Gamma_0(\R^n)$, the convex conjugate $\phi^*$ is defined by $\phi^{\ast}(z^\ast) = \sup_{z\in \R^n}\left( (z^{\ast}, z) - \phi(z) \right)$.
A \textit{subgradient} of $\phi$ at $x\in \R^n$ is $g\in\R^n$ satisfying
\begin{equation*}
\phi(y)\ge \phi(x) + (g, y-x),\quad \forall y \in \R^n.
\end{equation*}
The \textit{subdifferential} of $\phi$ at $x$ is the set of all
subgradients of $\phi$ at $x$, and it is denoted by $\partial \phi(x)$.

Let $\{x_k\}\subset \R^n$ be a sequence converging to a limit $x_{\infty}$. We say that the rate of convergence is quotient superlinear (Q-superlinear) if
$\lim_{k\to \infty}\frac{\Vert x_k- x_{\infty}\Vert}{\Vert x_{k+1}-x_{\infty}\Vert}=0$,
and quotient quadratic (Q-quadratic) if
$\lim_{k\to\infty}\frac{\Vert x_k-x_{\infty}\Vert}{\Vert x_{k+1}-x_{\infty}\Vert^2}<\infty$.
\subsection{Dual problem and the (standard) Lagrangian}\label{sec:dual_Lagrangian}
Let $(\mathcal{D})$ denote the Fenchel-Rockafellar dual problem of \eqref{min} defined by
\begin{equation}\label{max}
\max_{\lambda \in \R^m} -f^\ast(-E^\top \lambda) - \phi^\ast(\lambda). \tag{$\mathcal{D}$}
\end{equation}
Throughout this article, we assume that the following conditions are satisfied:
\begin{itemize}
\item[\textrm{(A1)}] There exists an $x\in \R^n$ such that $x\in \mbox{ri}(\mbox{dom} f)$ and $Ex\in \mbox{ri}(\mbox{dom} \phi)$.
\item[\textrm{(A2)}] There exists a $\lambda\in \R^m$ such that $-\lambda\in \mbox{ri}(\mbox{dom} \phi^\ast)$ and $-E^\top \lambda\in \mbox{ri}(\mbox{dom} f^\ast)$.
\end{itemize}
Here, $\mbox{ri}(S)$ for a set $S$ denotes the relative interior of
$S$, i.e., $\mbox{ri}(S)=\{x\in S \mid \exists \epsilon>0 \mbox{ such
  that }\mbox{aff}(S)\cap B(x,\epsilon) \subseteq S\}$, where
$\mbox{aff}(S)$ is the smallest affine set that contains the set $S$.
These conditions guarantee the existence of optimal solutions both of \eqref{min} and \eqref{max} (see, e.g., \cite[Prop.~2.2.12]{Kanno-Nonsmechconvopti:11}).

Now let $L$ be the ordinary Lagrangian for \eqref{min}
\begin{equation*}
L(x,\lambda) = \inf_{u} \left(f(x) + \phi(Ex + u) - ( \lambda,u) \right)= f(x) - \phi^\ast(\lambda) +( \lambda,Ex).
\end{equation*}
A pair $(\bar{x},\bar{\lambda})$ is said to be a saddle point of the Lagrangian $L$ if it satisfies
\begin{equation*}
L(\bar{x}, {\lambda}) \le L(\bar{x},\bar{\lambda}) \le L(x,\bar{\lambda}) \quad \forall x \in \R^n,\forall\lambda\in\R^m.
\end{equation*}
The next result links a saddle point of the Lagrangian $L$ with a pair of optimal solutions for \eqref{min} and \eqref{max} (see, e.g.,
\cite[Prop.~2.2.17]{Kanno-Nonsmechconvopti:11},\cite[Thm.~4.35]{Ito+Kunisch-Lagrmultapprvari:08}).
\begin{proposition}\label{prop:saddle}
Let $\Phi(x,\lambda)=f(x)+\phi(Ex-\lambda)$. Assume that $\Phi \colon \R^n\times \R^m\rightarrow \R\cup \{+\infty\}$ is a closed proper convex function.
Then, the following are equivalent:
\begin{itemize}
\item[\textrm{(1)}] $(\bar{x},\bar{\lambda})$ is a saddle point of $L$.
\item[\textrm{(2)}] $\bar{x}$ solves \eqref{min}, $\bar{\lambda}$ solves \eqref{max}, and $\min \eqref{min} = \max \eqref{max}$ holds.
\end{itemize}
\end{proposition}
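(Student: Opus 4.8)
The plan is to reduce the saddle-point condition to the elementary minimax relation between the objectives of \eqref{min} and \eqref{max}. The first step is to record, for every $x\in\R^n$ and $\lambda\in\R^m$, the two conjugacy identities
\begin{equation*}
\sup_{\lambda\in\R^m}L(x,\lambda)=f(x)+\phi(Ex),\qquad\inf_{x\in\R^n}L(x,\lambda)=-f^\ast(-E^\top\lambda)-\phi^\ast(\lambda),
\end{equation*}
read in $\R\cup\{+\infty\}$ and $\R\cup\{-\infty\}$, respectively. The second is merely the definition of $f^\ast$ applied to $\inf_x\bigl(f(x)+(E^\top\lambda,x)\bigr)$. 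For the first I would rewrite $\sup_\lambda\bigl((\lambda,Ex)-\phi^\ast(\lambda)\bigr)=\phi^{\ast\ast}(Ex)$ and invoke the Fenchel--Moreau theorem $\phi^{\ast\ast}=\phi$, valid since $\phi\in\Gamma_0(\R^m)$; this is the one place the closedness/convexity structure is genuinely used. I would also remark in passing that the hypothesis on $\Phi$ in the statement is automatically fulfilled here, being inherited from $f$ continuous convex, $\phi\in\Gamma_0$, and $(x,\lambda)\mapsto Ex-\lambda$ affine. With the two identities in hand, $\sup_\lambda L(\cdot,\lambda)$ is the objective of \eqref{min} and $\inf_x L(x,\cdot)$ is the objective of \eqref{max}, and the universal max-min inequality $\sup_\lambda\inf_x L(x,\lambda)\le\inf_x\sup_\lambda L(x,\lambda)$ gives weak duality $\min\eqref{min}\ge\max\eqref{max}$.

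For the implication (1)$\Rightarrow$(2), I would start from a saddle point $(\bar x,\bar\lambda)$: taking the supremum over $\lambda$ in $L(\bar x,\lambda)\le L(\bar x,\bar\lambda)$ yields $\sup_\lambda L(\bar x,\lambda)=L(\bar x,\bar\lambda)$, and taking the infimum over $x$ in $L(\bar x,\bar\lambda)\le L(x,\bar\lambda)$ yields $\inf_x L(x,\bar\lambda)=L(\bar x,\bar\lambda)$, and since $f$ is finite-valued these also force $L(\bar x,\bar\lambda)$ to be finite. By the two identities, the objective value of \eqref{min} at $\bar x$ equals $L(\bar x,\bar\lambda)$ equals the objective value of \eqref{max} at $\bar\lambda$; call this common value $v$. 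Since $\bar x$ is admissible for \eqref{min} and $\bar\lambda$ for \eqref{max}, one has $\min\eqref{min}\le v\le\max\eqref{max}$, and weak duality then forces $\min\eqref{min}=v=\max\eqref{max}$, which is exactly (2).

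For the converse (2)$\Rightarrow$(1), given that $\bar x$ solves \eqref{min}, $\bar\lambda$ solves \eqref{max}, and $\min\eqref{min}=\max\eqref{max}$, the two identities give
\begin{equation*}
\sup_\lambda L(\bar x,\lambda)=f(\bar x)+\phi(E\bar x)=-f^\ast(-E^\top\bar\lambda)-\phi^\ast(\bar\lambda)=\inf_x L(x,\bar\lambda).
\end{equation*}
Since $\sup_\lambda L(\bar x,\lambda)\ge L(\bar x,\bar\lambda)\ge\inf_x L(x,\bar\lambda)$ always holds, all three quantities coincide; that is, $L(\bar x,\lambda)\le L(\bar x,\bar\lambda)\le L(x,\bar\lambda)$ for all $x$ and $\lambda$, which is (1).

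The sandwich arguments are routine; the step I expect to be the main obstacle is the bookkeeping with extended-real-valued functions, i.e.\ verifying that the relevant suprema and infima are attained or at least that no $\infty-\infty$ arises, so that the conjugacy identities and the collapse of inequalities are all legitimate. In the present setting this turns out to be essentially self-correcting (finiteness of the optimal values and of $L$ at a saddle point is forced once $f$ is finite-valued, and (A1)--(A2) are only needed to guarantee that these optima exist), but I would isolate it as a short preliminary remark before running the two implications. As a cross-check, the statement is a specialization of Rockafellar's conjugate-duality/perturbation calculus, consistent with the references already cited.
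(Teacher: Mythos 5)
Your proof is correct. Note, though, that the paper does not prove Proposition~\ref{prop:saddle} at all: it is stated as a known result with pointers to Kanno and to Ito--Kunisch, and the only in-text verification is the one-line remark that $\Phi$ is closed proper convex so that the cited theorem applies. What you have written is a self-contained version of the standard conjugate-duality argument those references contain: the two identities $\sup_\lambda L(x,\lambda)=f(x)+\phi(Ex)$ (via $\phi^{\ast\ast}=\phi$, the one place $\phi\in\Gamma_0(\R^m)$ is essential) and $\inf_x L(x,\lambda)=-f^\ast(-E^\top\lambda)-\phi^\ast(\lambda)$, followed by the collapse of the minimax sandwich in both directions. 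This is the right argument and it buys the reader a proof the paper omits. One small point of precision: in (1)$\Rightarrow$(2) the finiteness of $L(\bar x,\bar\lambda)$ is not forced by finiteness of $f$ alone; you need $\phi$ proper to exclude $+\infty$ (so that $\sup_\lambda L(\bar x,\lambda)=f(\bar x)+\phi^{\ast\ast}(E\bar x)>-\infty$ requires an extra word) and, more to the point, properness of $\phi^\ast$ (equivalently, that $\phi$ is minorized by an affine function) to exclude $L(\bar x,\cdot)\equiv-\infty$. You correctly flag this extended-real bookkeeping as the delicate step and it does work out, but the parenthetical justification as written is slightly off; I would replace ``since $f$ is finite-valued'' by an appeal to properness of $f^\ast$ and $\phi^\ast$, which bound $\inf_xL(x,\bar\lambda)$ above by a finite number and $\sup_\lambda L(\bar x,\lambda)$ below by a finite number.
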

Since $f$ and $\phi$ are closed proper convex functions, $\Phi$ is
also a closed proper convex function. Proposition~\ref{prop:saddle} together with (A1) and (A2) implies the existence of a saddle point of $L$.
The existence of the saddle point of $L$ is crucial for the proximal method of multipliers.
As we will show in section 3, if the Lagrangian $L$ possesses at least one saddle point,
then the sequence generated by the proximal method of multipliers converges to a saddle point of $L$, which again by Proposition \ref{prop:saddle} implies that the sequence converges to a pair of optimal solutions of \eqref{min} and \eqref{max}.
\subsection{Moreau envelope and proximal operator}\label{sec:moreau}
We recall the definitions and some basic properties of the Moreau envelope and proximal operator.
Let $\phi\in \Gamma_0({\R^m})$ and $c>0$.
The \textit{proximal operator} $ \prox{\phi}: {\R^m}\rightarrow {\R^m}$  is defined as
\begin{align*}
 \prox{\phic}(z)=\arg\min_{u \in {\R^m}} \left(\phi(u) + \tfrac{c}{2}\Vert u-z\Vert^2 \right).
\end{align*}
Since the function to be minimized is strongly convex, it admits a
unique solution, and thus, the proximal operator is well defined.
The value function is called the \textit{Moreau envelope} or \textit{Moreau-Yosida approximation} and is denoted by $\phi_c$:
\begin{equation*}
\phi_c(z) =\phi(\prox{\phic}(z))+\tfrac{c}{2}\Vert \prox{\phic}(z)- z\Vert^2.
\end{equation*}
We list some properties of the Moreau envelope and the proximal operator \cite{Bauschke+Combettes-Convanalmonooper:11}:\\
\textbf{Upper bound.} $0 \le \phi(z)-\phi_c(z) $ for all $z \in {\R^m}$ and all $c>0$.\\
\textbf{Approximation.} $\lim_{c\to \infty}\phi_c(z) = \phi(z)$ for all $z\in {\R^m}$.\\
\textbf{Composition.} If $\phi(z) = a\psi(\alpha z + \beta)+b$ for $\psi\in\Gamma_0(\R^m)$ with $a\in\R,b\in \R^m,\beta\in \R^m$ and $\alpha\neq 0$, then
\begin{equation}\label{composition}
\prox{\phic}(z) = \frac{1}{\alpha}\left(\prox{\frac{a\alpha^2 \psi}{c}}(\alpha z + \beta) - \beta\right)
\end{equation}
\textbf{Affine addition}  If $\phi(z) = \psi(z)+(a,x)+b$ with $a,b\in \R^m$, then
\begin{equation*}
\prox{\phic}(z) = \prox{\frac{\psi}{c}}(z- a/c).
\end{equation*}
\textbf{Nonexpansiveness.} The proximal operator $\prox{\phic}$ is nonexpansive, that is,
\begin{equation*}
 \Vert \prox{\phic}(z) - \prox{\phic}(w)\Vert^2 \le (\prox{\phic}(z) - \prox{\phic}(w), z-w), \quad \forall z, \forall w\in {\R^m}.
 \end{equation*}
\textbf{Differentiability.} The Moreau envelope $\phi_c$ is Fr\'echet differentiable, and the gradient is given by
\begin{equation*}
   \nabla\phi_c(z) =c(z - \prox{\phic}(z) ), \quad \forall c>0, \forall z\in {\R^m}.
\end{equation*}
\textbf{Lipschitz continuity of the gradient.} The gradient $\nabla\phi_c $ is Lipschitz continuous with a Lipschitz constant $c$, i.e.,
\begin{equation*}
\Vert \nabla \phi_c(z) -  \nabla \phi_c(w)\Vert \le c\Vert z -w\Vert,  \quad \forall z ,\forall w \in {\R^m}.
\end{equation*}
\textbf{Decomposition.} The Moreau envelope and the proximal mapping of the conjugate of $\phi$ are related with $\phi_c$ and $\prox{\phic}$, respectively, as
\begin{align*}
\phi_c(z) + (\phi^\ast)_{\frac{1}{c}}\left(cz\right) =\tfrac{c}{2}\Vert z \Vert^2, \qquad 
\prox{\phic}(z) + \tfrac{1}{c}\prox{c\phi^{\ast}}\left(cz\right) = z. \nonumber
\end{align*}
\textbf{Subdifferential.} The following conditions are equivalent:
\begin{equation*}
\lambda \in \partial \phi(z) \Longleftrightarrow z - \prox{\phic}(z+\lambda/c)=0 \Longleftrightarrow \phi(z) = \phi_c(z+\lambda/c)-\frac{1}{2c}\Vert \lambda \Vert^2.
\end{equation*}
We refer interested readers to Tables~10.1 and 10.2  in \cite{Combettes+Wajs-Signrecoproxforw:05} for closed-form expressions of a number of frequently used proximal mappings.\\

\subsection{The augmented Lagrangian}
Let us recall the definition of the augmented Lagrangian \eqref{ag}.
The augmented Lagrangian $L_c(x,\lambda)$ is finite for all $x\in {\R^n}$ and $\lambda\in {\R^m}$ even when $\phi$ takes infinity at some point, such as the indicator function on a convex set. Moreover, it is Frechet differentiable with respect to both variables:
\begin{proposition}[\cite{Jin+Takeuchi-Lagroptisystclas:16}]\label{prop:KKT}
Let $c>0$ and $f$ be convex and continuously differentiable, and let $\phi\in\Gamma_0({\R^m})$. The augmented Lagrangian
$L_c$ satisfies the following properties:
\begin{itemize}
 \item[\textrm{(1)}] $L_c$ is finite for all $x\in {\R^n}$ and for all $\lambda \in {\R^m}$.
 \item[\textrm{(2)}] $L_c$ is convex and continuously differentiable
   with respect to $x$, and it is concave and continuously
 differentiable with respect to $\lambda$. Furthermore, for all
 $(x,\lambda)\in {\R^n}\times {\R^m}$ and for all $c>0$, the gradients
 $\nabla_xL_c$ and $\nabla_{\lambda} L_c$ are respectively written as
\begin{align}
 \nabla_{x} {L}_{c}(x,\lambda) & = \nabla_x f(x) +c  E^\top( E x + \lambda/c -\prox{\phic}( E x + \lambda/c)), \label{gradL1}\\
  \nabla_{\lambda}{L}_{c}(x,\lambda)& = E x - \prox{\phic}( E x  + \lambda/c). \nonumber 
\end{align}
\end{itemize}
\end{proposition}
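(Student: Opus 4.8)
The plan is to establish the three claims directly from the properties of the Moreau envelope collected in Section~\ref{sec:moreau}, handling the two variables separately. First, for finiteness it is enough to check that $\phi_c$ is finite everywhere, evaluated at $z=Ex+\lambda/c$, since $f$ is real-valued and $-\tfrac{1}{2c}\|\lambda\|^2$ is finite. The upper bound is immediate from the definition of the Moreau envelope: for any fixed $u\in\mbox{dom}(\phi)$ one has $\phi_c(z)\le\phi(u)+\tfrac{c}{2}\|u-z\|^2<+\infty$. For the lower bound, $\phi\in\Gamma_0(\R^m)$ admits a continuous affine minorant $u\mapsto(a,u)+b$, so
\begin{equation*}
\phi_c(z)=\inf_{u}\Bigl(\phi(u)+\tfrac{c}{2}\|u-z\|^2\Bigr)\ge\inf_{u}\Bigl((a,u)+b+\tfrac{c}{2}\|u-z\|^2\Bigr)>-\infty,
\end{equation*}
the last infimum being that of a coercive quadratic. (Equivalently, the Decomposition identity reduces finiteness to that of $(\phi^\ast)_{1/c}$, obtained the same way from $\phi^\ast\in\Gamma_0(\R^m)$.)

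For the dependence on $x$, fix $\lambda$. The Moreau envelope $\phi_c$ is convex (as the infimal convolution of the convex functions $\phi$ and $\tfrac{c}{2}\|\cdot\|^2$) and continuously differentiable by the Differentiability and Lipschitz-continuity properties listed in Section~\ref{sec:moreau}; composing it with the affine map $x\mapsto Ex+\lambda/c$ and adding the convex $C^1$ function $f$ shows that $x\mapsto L_c(x,\lambda)$ is convex and $C^1$. The chain rule together with $\nabla\phi_c(z)=c\bigl(z-\prox{\phic}(z)\bigr)$ then gives
\begin{equation*}
\nabla_xL_c(x,\lambda)=\nabla f(x)+E^\top\nabla\phi_c(Ex+\lambda/c)=\nabla f(x)+cE^\top\bigl(Ex+\lambda/c-\prox{\phic}(Ex+\lambda/c)\bigr),
\end{equation*}
which is exactly \eqref{gradL1}.

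For the dependence on $\lambda$, the only subtle point is concavity, since as written $\lambda\mapsto\phi_c(Ex+\lambda/c)-\tfrac{1}{2c}\|\lambda\|^2$ is a convex function minus a quadratic. I would remove the ambiguity by invoking the Decomposition identity $\phi_c(z)+(\phi^\ast)_{1/c}(cz)=\tfrac{c}{2}\|z\|^2$ with $z=Ex+\lambda/c$; expanding $\tfrac{c}{2}\|Ex+\lambda/c\|^2$ and cancelling the $\tfrac{1}{2c}\|\lambda\|^2$ contributions leaves
\begin{equation*}
L_c(x,\lambda)=f(x)+\tfrac{c}{2}\|Ex\|^2+(Ex,\lambda)-(\phi^\ast)_{1/c}(cEx+\lambda).
\end{equation*}
For fixed $x$ the right-hand side is an affine function of $\lambda$ minus $(\phi^\ast)_{1/c}(cEx+\lambda)$, and the latter is convex and $C^1$ (Moreau envelope of $\phi^\ast\in\Gamma_0(\R^m)$ precomposed with an affine map), so $\lambda\mapsto L_c(x,\lambda)$ is concave and continuously differentiable. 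For the gradient it is cleanest to differentiate the original expression: since $\nabla_\lambda\phi_c(Ex+\lambda/c)=\tfrac{1}{c}\nabla\phi_c(Ex+\lambda/c)=Ex+\lambda/c-\prox{\phic}(Ex+\lambda/c)$, we get
\begin{equation*}
\nabla_\lambda L_c(x,\lambda)=\bigl(Ex+\lambda/c-\prox{\phic}(Ex+\lambda/c)\bigr)-\tfrac{1}{c}\lambda=Ex-\prox{\phic}(Ex+\lambda/c),
\end{equation*}
as asserted; continuity of both gradients follows from continuity of $\nabla f$ and of $\prox{\phic}$.

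I expect the concavity in $\lambda$ to be the main obstacle, or at least the only place where more than the chain rule is needed: it amounts to the fact that $z\mapsto\tfrac{c}{2}\|z\|^2-\phi_c(z)=(\phi^\ast)_{1/c}(cz)$ is convex (equivalently, that $\prox{\phic}$ is monotone), so that the curvature of the Moreau envelope exactly offsets the $-\tfrac{1}{2c}\|\lambda\|^2$ term. Everything else — finiteness, smoothness, and the two gradient formulas — is routine once the Moreau-envelope toolbox of Section~\ref{sec:moreau} is available.
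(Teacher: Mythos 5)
Your argument is correct. Note that the paper itself offers no proof of this proposition --- it is imported verbatim from \cite{Jin+Takeuchi-Lagroptisystclas:16} --- so there is no in-paper argument to compare against; what you have written is a legitimate self-contained verification from the Moreau-envelope toolbox of Section~\ref{sec:moreau}. You correctly isolate the one non-routine point, namely concavity in $\lambda$, and your resolution via the decomposition identity $\phi_c(z)+(\phi^\ast)_{1/c}(cz)=\tfrac{c}{2}\Vert z\Vert^2$ is exactly the right device: substituting $z=Ex+\lambda/c$ and expanding the square cancels the $+\tfrac{1}{2c}\Vert\lambda\Vert^2$ hidden inside $\phi_c(Ex+\lambda/c)$ against the explicit $-\tfrac{1}{2c}\Vert\lambda\Vert^2$, leaving an affine-minus-convex function of $\lambda$. (Equivalently, one could argue that $\lambda\mapsto L_c(x,\lambda)$ is an infimum over $u$ of the concave --- indeed affine-minus-quadratic --- functions $\lambda\mapsto f(x)+\phi(Ex+u)-(\lambda,u)+\tfrac{c}{2}\Vert u\Vert^2\cdot 0+\dots$ arising from the dual representation $L_c(x,\lambda)=\inf_u\bigl(f(x)+\phi(Ex-u)+(\lambda,u)+\tfrac{c}{2}\Vert u\Vert^2\bigr)$; your route through the conjugate envelope is cleaner and stays entirely within the stated toolbox.) The finiteness argument via an affine minorant of $\phi$, the chain-rule computation of $\nabla_xL_c$, and the cancellation $\tfrac{1}{c}\nabla\phi_c(Ex+\lambda/c)-\tfrac{1}{c}\lambda=Ex-\prox{\phic}(Ex+\lambda/c)$ are all sound, and continuity of the gradients does follow from the nonexpansiveness (hence continuity) of $\prox{\phic}$.
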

where $G\in \partial_C\prox{\phic}(z)$ at $z=Ex+\lambda/c$.
\subsection{The proximal point algorithm}
\subsubsection{Maximal monotone operator}
A set-valued operator $T\colon H \rightarrow 2^H$, defined on a real Hilbert space with an inner product $( \cdot,\cdot)$, is called \textit{monotone} if $( z - z^\prime, w-w^\prime) \ge 0$ for all $w \in T(z),\; w^\prime \in T(z^\prime)$. In addition, if the graph of $T$, $G(T) = \{(z,w)\in H\times H \mid w \in T(z)\}$,
is not properly contained in the graph of any other monotone operator,
then we call $T$ a \textit{maximal monotone} operator.
The fundamental property of the maximal monotone operator is that
for any $c>0$ and any $z\in H$, there exists a unique element $u$ such that
\begin{equation*}
z\in (I+cT)(u).
\end{equation*}
Thus, the operator $P:=(I+cT)^{-1}$ is a single-valued mapping defined on all of $H$.
$P$ is called the \textit{proximal mapping} associated with $cT$. It is nonexpansive
\begin{equation}\label{nonexpansive}
	\Vert P(z) - P(w)\Vert \le \Vert z - w\Vert, \quad \forall z, \forall w \in H;
\end{equation}
and $z=P(z)$ if and only if $0\in T(z)$.
The operator $(I +cT)^{-1}$ is also called the \textit{resolvent
  operator} of $T$ in the field of  functional analysis, and it is often denoted by $J_{cT}$.
Throughout this article, we call $P$ the proximal mapping and the point $P(z)$ for $z\in H$ \textit{the proximal point}.

\subsubsection{Proximal point algorithm}
Many problems can be recast as finding an element satisfying $0\in T(z)$ of a maximal monotone operator $T$.
A fundamental algorithm for solving the inclusion is the proximal point algorithm \cite{Rockafellar-Monooperproxpoin:76a}, which is a fixed-point algorithm
generating, for any initial point $z_0$ and a sequence $\{c_k\}$ of positive numbers bounded away from zero (i.e., $\inf_k c_k > 0$),
a sequence $\{z_k\}$ by the iteration
\begin{align*}
z_{k+1}= P_k(z_k):=(I + c_kT)^{-1}(z_k).
\end{align*}

The evaluation of $P_k(z)$ for a given $z$ is often nontrivial. In our
setting, the evaluation of $P_k(z)$ is equivalent to solving
a nonsmooth convex optimization problem (see Corollary~\ref{cor:P}).
In practice, it is often hopeless to compute the solution without computation error, and only an approximation is possible.
The following is the proximal point algorithm with the inexact evaluation of $P_k(z_k)$.
\renewcommand{\thealgorithm}{}
\begin{algorithm}
\caption{The proximal point algorithm} 
\begin{algorithmic}[1]
\STATE Let $r\ge 0$. Choose sequences of positive number $\{c_k\}_k$
such that $\liminf_{k\to \infty} c_k > 0$ and of $\{\epsilon_k\}_k$ such that $\sum_{k=0}^\infty \epsilon_k <\infty$.
\STATE Choose $z_0 \in H$ arbitrary.
\STATE Compute an approximation $z_{k+1}$ satisfying
\begin{equation}\label{ppa_stop1}
\Vert z_{k+1} - P_k(z_k) \Vert \le \epsilon_k\min(1,\Vert z_{k+1} -z_k\Vert^r). \tag{$\textrm{A}_r$}
\end{equation}
\STATE Set $z_k\leftarrow z_{k+1}$, $c_k\leftarrow c_{k+1}$ and $\epsilon_k\leftarrow \epsilon_{k+1}$, and return to Step 3.
\end{algorithmic}
\end{algorithm}
\renewcommand{\thealgorithm}{1}
\begin{remark}
At first glance, it appears meaningless to consider the criterion \eqref{ppa_stop1} because the criterion involves the unaccessible quantity $P_k(z_k)$.
The criterion is simply what is required in the context of the theoretical justification of the convergence of the iteration.
  In practical implementations of the algorithm, we will use an
  alternative criterion that does not involve $P_k(z_k)$ and that is sufficient for the condition \eqref{ppa_stop1}.
A computationally amenable criterion will be presented in the next section.
\end{remark}
We cite the general result on the convergence of the proximal point algorithm from \cite[Thm.~1]{Rockafellar-Monooperproxpoin:76a}.
\begin{theorem}[{\cite[Thm.~1]{Rockafellar-Monooperproxpoin:76a}}]\label{thm:mono_converge} Let $T$ be a maximal monotone operator.
Let $\{z_k\}$ be any sequence generated by the proximal point algorithm
satisfying the criterion \eqref{ppa_stop1} ($r\ge 0$) with $\liminf_{k\to\infty}c_k>0$.
Suppose that there exists at least one solution to $0\in T(z)$. Then, $\{z_k\}$ converges weakly to a point $z_\infty$ satisfying $0\in T(z_\infty)$.
\end{theorem}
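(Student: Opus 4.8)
The plan is to reproduce the quasi-Fej\'er argument underlying \cite{Rockafellar-Monooperproxpoin:76a}. Write $P_k=(I+c_kT)^{-1}$; each $P_k$ is single-valued on all of $H$ and firmly nonexpansive, and $0\in T(z^\ast)$ is equivalent to $z^\ast=P_k(z^\ast)$ for every $k$. Since $\min(1,\Vert z_{k+1}-z_k\Vert^r)\le 1$, the criterion \eqref{ppa_stop1} always yields the absolute bound $\Vert z_{k+1}-P_k(z_k)\Vert\le\epsilon_k$, and it is this weaker statement that the analysis really uses. First I would fix a solution $z^\ast$ of $0\in T(z)$ and combine this bound with the nonexpansiveness \eqref{nonexpansive} of $P_k$ to get $\Vert z_{k+1}-z^\ast\Vert\le\Vert z_k-z^\ast\Vert+\epsilon_k$. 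Because $\sum_k\epsilon_k<\infty$, a standard summation lemma then shows that $\lim_k\Vert z_k-z^\ast\Vert$ exists and is finite for every solution $z^\ast$; in particular $\{z_k\}$, and hence $\{P_k(z_k)\}$, is bounded.

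Next I would show that the residual vanishes, i.e. $z_k-P_k(z_k)\to 0$ strongly. Here the sharper firmly-nonexpansive estimate $\Vert z_k-P_k(z_k)\Vert^2\le\Vert z_k-z^\ast\Vert^2-\Vert P_k(z_k)-z^\ast\Vert^2$ is needed. Using $\Vert P_k(z_k)-z^\ast\Vert\ge\Vert z_{k+1}-z^\ast\Vert-\epsilon_k$ together with the boundedness from the previous step, one replaces $\Vert P_k(z_k)-z^\ast\Vert^2$ by $\Vert z_{k+1}-z^\ast\Vert^2$ up to an $O(\epsilon_k)$ error; summing over $k$, the differences $\Vert z_k-z^\ast\Vert^2-\Vert z_{k+1}-z^\ast\Vert^2$ telescope while $\sum_k\epsilon_k<\infty$, so $\sum_k\Vert z_k-P_k(z_k)\Vert^2<\infty$. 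This gives $z_k-P_k(z_k)\to 0$ and therefore also $z_{k+1}-z_k\to 0$.

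Then I would identify the weak cluster points of $\{z_k\}$. Put $u_k=P_k(z_k)$ and $w_k=(z_k-u_k)/c_k$, so that $w_k\in T(u_k)$ by definition of the resolvent. Since $\liminf_k c_k>0$ and $z_k-u_k\to 0$, we have $w_k\to 0$ strongly, while $u_k=z_k-(z_k-u_k)$ has the same weak cluster points as $\{z_k\}$. If $u_{k_j}\rightharpoonup z_\infty$ along a subsequence, then for every $(v,y)\in G(T)$ monotonicity gives $(u_{k_j}-v,\,w_{k_j}-y)\ge 0$; passing to the limit yields $(z_\infty-v,\,-y)\ge 0$ for all $(v,y)\in G(T)$, and maximality of $T$ forces $0\in T(z_\infty)$. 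Finally, since $\lim_k\Vert z_k-z^\ast\Vert$ exists for every solution $z^\ast$ and every weak cluster point of the bounded sequence $\{z_k\}$ is a solution, Opial's lemma implies that $\{z_k\}$ has exactly one weak cluster point, i.e. $z_k$ converges weakly to some $z_\infty$ with $0\in T(z_\infty)$.

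The genuinely delicate step is the last one in an infinite-dimensional $H$: compactness is unavailable, so the passage from ``the distance to each solution is convergent'' plus ``every weak cluster point solves the inclusion'' to actual weak convergence must go through Opial's lemma. In the finite-dimensional setting relevant to this paper, weak convergence is ordinary convergence and this reduces to the routine subsequence-limit argument. The other potential subtlety, the demiclosedness of $G(T)$ used above, is itself just a consequence of monotonicity together with maximality, as the limiting inequality shows.
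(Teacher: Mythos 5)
Your proposal is correct, but note that the paper does not prove this theorem at all: it is imported verbatim as Theorem~1 of Rockafellar's 1976 article, so there is nothing internal to compare against. What you have written is essentially Rockafellar's own argument in its modern quasi-Fej\'er packaging --- reduce \eqref{ppa_stop1} to the summable absolute error $\Vert z_{k+1}-P_k(z_k)\Vert\le\epsilon_k$, use firm nonexpansiveness of the resolvent to get summability of $\Vert z_k-P_k(z_k)\Vert^2$, pass $w_k=(z_k-P_k(z_k))/c_k\in T(P_k(z_k))\to 0$ through the maximality of $T$ to identify weak cluster points (this uses $\liminf_k c_k>0$, correctly), and conclude by Opial's lemma --- and each step checks out. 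Your closing remarks are also accurate: the $\min(1,\Vert z_{k+1}-z_k\Vert^r)$ factor is only needed for the rate results, not for convergence, and in the finite-dimensional setting actually used in this paper the Opial step degenerates to a routine subsequence argument.
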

In this article, we do not consider the rate of convergence of the proximal point algorithm. We refer the reader to the article, e.g.,
\cite{Luque-Asymconvanalprox:84}, for details on the convergence rates.
\section{The proximal method of multipliers}
In this section, we investigate the proximal method of multipliers (Algorithm~\ref{alg:1}).
\begin{algorithm}
\caption{The proximal method of multipliers}\label{alg:1}
\begin{algorithmic}[1]
\STATE Choose $r\ge0$ and $\{c_k\}_k$ such that $\liminf_{k\to\infty}\{c_k\}_k > 0$ and $\{\epsilon_k\}_k$ such that $\sum_{k=0}^\infty \epsilon_k <\infty$.
\STATE Choose $(x^0,\lambda^0)\in\R^n\times\R^m$.
\STATE Compute an approximation $x_{k+1}$ to the minimization problem
\begin{align}
\min_{x} \AL{x}{\lambda_k}{c_k} + 1/(2c_k)\Vert x-x_k\Vert^2,  \label{ppm_ag2}
\end{align}
with the accuracy
\begin{equation}\label{ppm_stop1}
\Vert \nabla_x \AL{x_{k+1}}{\lambda_k}{c_k} + c^{-1}_k(x_{k+1}-x_k)\Vert \le \frac{\epsilon_k}{c_k}\min(1,\Vert (x_{k+1},\lambda_{k+1}) -  (x_{k},\lambda_{k})\Vert^r ).
\end{equation}
where $\lambda_{k+1}$ is given by
\begin{align}
 \lambda_{k+1} &= \lambda_k + c_k(Ex_{k+1}-\mbox{prox}_{\phi/c_k}(Ex_{k+1}+\lambda_k/c_k)).\label{ppm_ag2_2}
\end{align}
\STATE Set $x_k\leftarrow x_{k+1}$, $\lambda_k\leftarrow \lambda_{k+1}$, $c_k\leftarrow c_{k+1}$ and $\epsilon_k\leftarrow \epsilon_{k+1}$ and return to Step 3.
\end{algorithmic}
\end{algorithm}
We begin with the statement of our main theorem on the convergence of the proximal method of multipliers.
\begin{theorem}\label{thm:ppm}
From any initial point $(x_0,\lambda_0)$, let $\{(x_k,\lambda_k)\}_k$ be any sequence generated by the proximal method of multipliers (Algorithm~\ref{alg:1}).
Suppose that $($A1$)$ and $($A2$)$ hold.
Then, $\{x_k\}_k$ converges to an optimal solution of \eqref{min}, and $\{\lambda_k\}_k$ converges to an optimal solution of \eqref{max}.
\end{theorem}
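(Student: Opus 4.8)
The plan is to realize Algorithm~\ref{alg:1} as an instance of the inexact proximal point algorithm applied to a specific maximal monotone operator, and then invoke Theorem~\ref{thm:mono_converge}. Concretely, following Rockafellar's minmax construction, I would define on $H=\R^n\times\R^m$ the operator
\[
T_L(x,\lambda) = \bigl(\partial_x L(x,\lambda)\bigr) \times \bigl(\partial_\lambda(-L)(x,\lambda)\bigr) = \bigl(\nabla f(x) + E^\top\lambda\bigr) \times \bigl(\partial \phi^\ast(\lambda) - Ex\bigr),
\]
the monotone operator associated with the ordinary Lagrangian $L(x,\lambda)=f(x)-\phi^\ast(\lambda)+(\lambda,Ex)$. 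Since $L$ is convex-concave, closed and proper (which follows from the hypotheses on $f$ and $\phi$, as noted after Proposition~\ref{prop:saddle}), a classical result of Rockafellar guarantees $T_L$ is maximal monotone, and $0\in T_L(\bar x,\bar\lambda)$ is exactly the saddle-point condition for $L$. By Proposition~\ref{prop:saddle} together with (A1)--(A2), such a saddle point exists, so the solution set of $0\in T_L(z)$ is nonempty, and by Proposition~\ref{prop:saddle} again any such saddle point is a pair of optimal solutions of \eqref{min} and \eqref{max}.

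The core of the argument is to show that one step of the proximal point iteration $z_{k+1}=(I+c_k T_L)^{-1}(z_k)$ with $z_k=(x_k,\lambda_k)$ coincides with the update $(x_{k+1},\lambda_{k+1})$ produced by \eqref{ppm_ag2}--\eqref{ppm_ag2_2}. I would start from the inclusion
\[
(x_k,\lambda_k) \in (I+c_k T_L)(x_{k+1},\lambda_{k+1}),
\]
i.e. $x_k \in x_{k+1} + c_k(\nabla f(x_{k+1}) + E^\top\lambda_{k+1})$ and $\lambda_k \in \lambda_{k+1} + c_k(\partial\phi^\ast(\lambda_{k+1}) - Ex_{k+1})$. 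The second inclusion rearranges to $Ex_{k+1} + \lambda_k/c_k \in \lambda_{k+1}/c_k + \partial\phi^\ast(\lambda_{k+1})$, which by the Moreau decomposition / resolvent identity for $\phi^\ast$ is equivalent to $\lambda_{k+1} = \prox{c_k\phi^\ast}(c_k Ex_{k+1} + \lambda_k) = c_k\bigl(Ex_{k+1}+\lambda_k/c_k - \prox{\phi/c_k}(Ex_{k+1}+\lambda_k/c_k)\bigr)$, which is precisely \eqref{ppm_ag2_2}; note this also shows $\lambda_{k+1}=\nabla_\lambda \AL{x_{k+1}}{\lambda_k}{c_k}\cdot c_k + \lambda_k$ via the gradient formula in Proposition~\ref{prop:KKT}. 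Substituting this expression for $\lambda_{k+1}$ into the first inclusion and using the gradient formula \eqref{gradL1} for $\nabla_x L_c$, the first inclusion becomes exactly the stationarity condition $\nabla_x\AL{x_{k+1}}{\lambda_k}{c_k} + c_k^{-1}(x_{k+1}-x_k)=0$ for the strongly convex subproblem \eqref{ppm_ag2}. This identifies the exact subproblem solution with the exact proximal point.

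It then remains to match the \emph{inexact} criteria: I would show that the accuracy condition \eqref{ppm_stop1} on the residual $\nabla_x\AL{x_{k+1}}{\lambda_k}{c_k}+c_k^{-1}(x_{k+1}-x_k)$, together with the exact formula \eqref{ppm_ag2_2} for $\lambda_{k+1}$, implies the abstract criterion $\|z_{k+1}-P_k(z_k)\|\le \epsilon_k\min(1,\|z_{k+1}-z_k\|^r)$ of (A$_r$). The point is that the map from the $x$-residual to the displacement $z_{k+1}-P_k(z_k)$ is controlled: since $z_{k+1}=(x_{k+1},\lambda_{k+1})$ with $\lambda_{k+1}$ computed \emph{exactly} from $x_{k+1}$ by the nonexpansive proximal formula, and since $P_k$ itself is nonexpansive, a short estimate bounds $\|z_{k+1}-P_k(z_k)\|$ by a constant times $\|\nabla_x\AL{x_{k+1}}{\lambda_k}{c_k}+c_k^{-1}(x_{k+1}-x_k)\|$ times $c_k$; absorbing the constants (and using $\liminf c_k>0$ only matters for the other direction) yields (A$_r$) up to rescaling the summable sequence $\{\epsilon_k\}$. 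Once (A$_r$) holds, Theorem~\ref{thm:mono_converge} gives weak — hence, in finite dimensions, ordinary — convergence of $\{z_k\}=\{(x_k,\lambda_k)\}$ to some $(\bar x,\bar\lambda)$ with $0\in T_L(\bar x,\bar\lambda)$, and Proposition~\ref{prop:saddle} finishes the proof.

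The main obstacle I anticipate is the bookkeeping in the inexactness translation: the criterion \eqref{ppm_stop1} is phrased purely in terms of the primal residual and the displacement $\|(x_{k+1},\lambda_{k+1})-(x_k,\lambda_k)\|$, whereas (A$_r$) is in terms of the distance to the true proximal point $P_k(z_k)$, so one must carefully quantify how a small primal stationarity residual propagates (through the $\lambda$-update and through the strong convexity modulus $1/c_k$ of the subproblem) to a small error in both components of $z_{k+1}$. Getting the dependence on $c_k$ right — and checking it is harmless because $\{c_k\}$ is bounded below and the tolerances are summable — is the delicate part; the algebraic identification of the exact iteration with the resolvent step is, by contrast, routine once the Moreau/resolvent identities from section~\ref{sec:moreau} are invoked.
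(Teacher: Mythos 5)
Your proposal is correct and follows essentially the same route as the paper: Rockafellar's minmax operator $T_L$, the identification of the exact subproblem solution with the resolvent $(I+c_kT_L)^{-1}$ via the Moreau decomposition, translation of the stopping rule \eqref{ppm_stop1} into the abstract criterion \eqref{ppa_stop1}, and an appeal to Theorem~\ref{thm:mono_converge} plus Proposition~\ref{prop:saddle}. The ``delicate part'' you flag is handled in the paper exactly as you anticipate, by the clean observation that $(x_{k+1},\lambda_{k+1})=P_k(c_kw+x_k,\lambda_k)$ for the residual $w=\nabla_x\AL{x_{k+1}}{\lambda_k}{c_k}+c_k^{-1}(x_{k+1}-x_k)$, so nonexpansiveness of $P_k$ gives $\Vert z_{k+1}-P_k(z_k)\Vert\le c_k\Vert w\Vert$ and the factor $\epsilon_k/c_k$ in \eqref{ppm_stop1} cancels the $c_k$ with no rescaling of $\{\epsilon_k\}$ needed.
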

The proof follows Rockafellar's original idea in \cite{Rockafellar-Monooperproxpoin:76a} developed for the proximal method of multipliers for the inequality-constrained problem \eqref{min_ineq}.
We begin with the definition of a set-valued operator $T_L$ associated with the ordinary Lagrangian $L$: for a point $(x,\lambda)$, $T_L(x,\lambda)$ is defined as a set of all $(u,v)$ such that $u$ is a subgradient of convex function $L(\cdot,\lambda)$ at $x$ and $v$ is a subgradient of the convex function $-L(x,\cdot)$ at $\lambda$
\begin{equation}\label{TL}
T_L(x,\lambda):= \{(u,v) \mid u\in \partial_x L(x,\lambda), v \in \partial_\lambda(-L(x,\lambda))\},\quad \forall (x,\lambda)\in \R^n\times\R^m.
\end{equation}
We have the following Lemma.
\begin{lemma}
The map $T_L$ defined by \eqref{TL} is maximal monotone.
\end{lemma}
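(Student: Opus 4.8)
\medskip
\noindent\textbf{Proof idea.} The plan is to view $T_L$ as the subgradient operator of the saddle function $L(x,\lambda)=f(x)+(\lambda,Ex)-\phi^\ast(\lambda)$, to prove monotonicity by a one‑line telescoping of subgradient inequalities, and to prove maximality through Minty's surjectivity criterion. I would first record the structural facts: for each fixed $\lambda$ the map $x\mapsto L(x,\lambda)$ is convex, and it is finite and continuous whenever $\lambda\in\mathrm{dom}\,\phi^\ast$; for each fixed $x$ the map $\lambda\mapsto -L(x,\lambda)=\phi^\ast(\lambda)-(\lambda,Ex)-f(x)$ belongs to $\Gamma_0(\R^m)$ because $\phi^\ast\in\Gamma_0(\R^m)$. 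Hence $L$ is a closed proper convex–concave function on $\R^n\times\R^m$, and $T_L$ in \eqref{TL} is exactly the monotone operator canonically associated with it.

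For monotonicity, take $(u,v)\in T_L(x,\lambda)$ and $(u',v')\in T_L(x',\lambda')$. By the definition \eqref{TL},
\begin{align*}
L(x',\lambda)&\ge L(x,\lambda)+(u,x'-x),\\
L(x,\lambda')&\ge L(x',\lambda')+(u',x-x'),\\
-L(x,\lambda')&\ge -L(x,\lambda)+(v,\lambda'-\lambda),\\
-L(x',\lambda)&\ge -L(x',\lambda')+(v',\lambda-\lambda').
\end{align*}
Summing the four inequalities, every value of $L$ cancels on the left‑hand side, and rearranging the right‑hand side yields $(x-x',u-u')+(\lambda-\lambda',v-v')\ge 0$, which is the monotonicity of $T_L$.

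For maximality, I would use the fact that a monotone operator on a (finite‑dimensional) Hilbert space is maximal iff $I+T_L$ is surjective. Fix $(p,q)\in\R^n\times\R^m$. The inclusion $(p,q)\in(I+T_L)(x,\lambda)$ means $p-x\in\partial_xL(x,\lambda)$ and $q-\lambda\in\partial_\lambda(-L(x,\lambda))$, which is equivalent to $(x,\lambda)$ being a saddle point of the regularized saddle function
\[
\Psi(x,\lambda):=L(x,\lambda)+\tfrac12\|x-p\|^2-\tfrac12\|\lambda-q\|^2 ,
\]
now strongly convex in $x$ and strongly concave in $\lambda$. I would then establish the existence of such a saddle point by the standard marginal‑function argument: the function $\psi(\lambda):=\inf_x\Psi(x,\lambda)$ equals a finite continuous concave function of $\lambda$ minus $\phi^\ast(\lambda)$ minus $\tfrac12\|\lambda-q\|^2$, hence it is concave, upper semicontinuous, and — since $\phi^\ast$ is minorized by an affine function — coercive, so it attains its maximum at some $\bar\lambda$; letting $\bar x:=\arg\min_x\Psi(x,\bar\lambda)$ and using the envelope formula for $\partial\psi(\bar\lambda)$ together with $0\in\partial(-\psi)(\bar\lambda)$, one checks that $(p,q)\in(I+T_L)(\bar x,\bar\lambda)$. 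Alternatively, the maximality assertion follows at once from Rockafellar's theorem that the subgradient operator of a closed proper saddle function is maximal monotone, which I would cite as a shortcut.

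The main obstacle is precisely this maximality step — specifically, handling the possibly extended‑real‑valued term $\phi^\ast$ (equivalently, the fact that $L(\cdot,\lambda)\equiv-\infty$ when $\phi^\ast(\lambda)=+\infty$, and that $-L(x,\cdot)$ is not everywhere finite) when asserting coerciveness of $\psi$ and the existence of the saddle point of $\Psi$. This is where closedness and properness of $L$ as a saddle function are genuinely used; the monotonicity computation, by contrast, is routine.
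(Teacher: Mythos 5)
Your proof is correct, but it takes a different route from the paper: the paper disposes of this lemma with a single citation to Rockafellar's general result that the (sub)gradient operator of a closed proper saddle function is maximal monotone (Cor.~2, p.~249 of \emph{Monotone operators associated with saddle-functions and minimax problems}) --- precisely the ``shortcut'' you mention in your last sentence --- whereas your primary argument is a self-contained verification: the four-inequality telescoping for monotonicity and Minty's surjectivity criterion for maximality. The direct route buys transparency; it makes visible exactly where the standing hypotheses enter, namely that $f$ is finite on all of $\R^n$ and that $\phi^\ast$ is proper, lsc, and affinely minorized, which is what gives coercivity of the marginal function $\psi$ and hence surjectivity of $I+T_L$. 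The citation buys brevity and covers the general extended-real-valued saddle-function setting without case analysis. One small point you should make explicit in the monotonicity step: the claim that ``every value of $L$ cancels'' presupposes that the four mixed values $L(x,\lambda),L(x',\lambda),L(x,\lambda'),L(x',\lambda')$ are all finite; this does hold here, because nonemptiness of $\partial_\lambda(-L)(x,\lambda)$ forces $\phi^\ast(\lambda)<\infty$ (likewise for $\lambda'$) and $f$ is everywhere finite, but without that observation the cancellation would be an $\infty-\infty$ manipulation. With that remark added, both halves of your argument go through, and your identification of the maximality step (handling the $-\infty$ values of $L(\cdot,\lambda)$ off $\mathrm{dom}\,\phi^\ast$) as the genuinely nontrivial part is exactly right --- it is the content of the theorem the paper cites.
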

\begin{proof}
See  \cite[Cor.~2, p.~249]{Rockafellar-Monooperassowith:70}.
\end{proof}
Since $T_L$ is maximal, the operator $P_k=(I+c_kT_L)^{-1}$
is well defined as the single-valued operator on all $\R^n\times\R^m$.　
The next result provides the connection between the approximation $x_{k+1}$ and the proximal point $P_k(x_k,\lambda_k)$.
\begin{proposition}\label{lem_P}
Let $(x_k,\lambda_k)$ be a current iteration. Let $x_{k+1}$ be any approximation to the minimization problem \eqref{ppm_ag2} and
$\lambda_{k+1}$ be defined as \eqref{ppm_ag2_2}. Then, we have the estimate
	\begin{equation}\label{estimates1}
	\Vert (x_{k+1},\lambda_{k+1}) -P_k(x_k,\lambda_k)\Vert \le c_k\Vert \nabla_x \AL{x_k}{\lambda_k}{c_k} + c_k^{-1}(x_{k+1} -x_k)\Vert.
	\end{equation}
Moreover, if $x_{k+1}$ satisfies the condition \eqref{ppm_stop1}, we have
\begin{equation}\label{estimates2}
  \Vert (x_{k+1},\lambda_{k+1}) - P_k(x_k,\lambda_k)\Vert \le \epsilon_k\min(1,\Vert (x_{k+1},\lambda_{k+1}) -  (x_{k},\lambda_{k})\Vert^r ).
\end{equation}
\end{proposition}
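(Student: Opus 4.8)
\emph{Proof plan.}\quad The plan is to exhibit $(x_{k+1},\lambda_{k+1})$ itself as the \emph{exact} proximal point $P_k$ of a point that differs from $(x_k,\lambda_k)$ only in its first coordinate, by exactly $c_ke_k$, where $e_k:=\nabla_x \AL{x_{k+1}}{\lambda_k}{c_k}+c_k^{-1}(x_{k+1}-x_k)$ is precisely the residual bounded in \eqref{ppm_stop1}; estimate \eqref{estimates1} then drops out of the nonexpansiveness \eqref{nonexpansive} of $P_k$, and \eqref{estimates2} is an immediate consequence of \eqref{ppm_stop1}. Since $L(x,\lambda)=f(x)-\phi^\ast(\lambda)+(\lambda,Ex)$ with $f$ differentiable, the operator \eqref{TL} has the explicit form $T_L(x,\lambda)=\{\nabla f(x)+E^\top\lambda\}\times\bigl(\partial\phi^\ast(\lambda)-Ex\bigr)$, so $(\bar x,\bar\lambda)=P_k(z_1,z_2)$ is equivalent to the pair of relations
\[
c_k^{-1}(z_1-\bar x)=\nabla f(\bar x)+E^\top\bar\lambda,\qquad c_k^{-1}(z_2-\bar\lambda)\in\partial\phi^\ast(\bar\lambda)-E\bar x .
\]

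Next I would record two identities connecting the augmented Lagrangian and the dual update \eqref{ppm_ag2_2} to the ordinary Lagrangian. Put $w:=\prox{\phi/c_k}(Ex_{k+1}+\lambda_k/c_k)$, so that \eqref{ppm_ag2_2} reads $\lambda_{k+1}=\lambda_k+c_k(Ex_{k+1}-w)$, equivalently $c_k(Ex_{k+1}+\lambda_k/c_k-w)=\lambda_{k+1}$. Inserting this into the gradient formula \eqref{gradL1} of Proposition~\ref{prop:KKT} gives
\[
\nabla_x \AL{x_{k+1}}{\lambda_k}{c_k}=\nabla f(x_{k+1})+c_kE^\top\bigl(Ex_{k+1}+\lambda_k/c_k-w\bigr)=\nabla f(x_{k+1})+E^\top\lambda_{k+1}.
\]
On the other hand, the optimality condition of the proximal minimization defining $w$ (equivalently, the ``Subdifferential'' property of the Moreau envelope recalled in section~\ref{sec:moreau}) yields $\lambda_{k+1}=c_k(Ex_{k+1}+\lambda_k/c_k-w)\in\partial\phi(w)$, whence $w\in\partial\phi^\ast(\lambda_{k+1})$ by the conjugate subgradient relation.

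Finally I would verify that $(x_{k+1},\lambda_{k+1})=P_k\bigl(x_k+c_ke_k,\ \lambda_k\bigr)$ by checking the two relations above at $(\bar x,\bar\lambda)=(x_{k+1},\lambda_{k+1})$ with $(z_1,z_2)=(x_k+c_ke_k,\lambda_k)$. Using the definition of $e_k$, the first relation becomes
\[
c_k^{-1}\bigl((x_k+c_ke_k)-x_{k+1}\bigr)=\nabla_x \AL{x_{k+1}}{\lambda_k}{c_k}=\nabla f(x_{k+1})+E^\top\lambda_{k+1},
\]
which is the first identity; using $\lambda_{k+1}=\lambda_k+c_k(Ex_{k+1}-w)$, the second relation becomes $w-Ex_{k+1}\in\partial\phi^\ast(\lambda_{k+1})-Ex_{k+1}$, which is the second identity. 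Hence $(x_{k+1},\lambda_{k+1})=P_k(x_k+c_ke_k,\lambda_k)$, and \eqref{nonexpansive} gives
\[
\|(x_{k+1},\lambda_{k+1})-P_k(x_k,\lambda_k)\|=\|P_k(x_k+c_ke_k,\lambda_k)-P_k(x_k,\lambda_k)\|\le\|(c_ke_k,0)\|=c_k\|e_k\|,
\]
which is \eqref{estimates1}; combining with \eqref{ppm_stop1} gives $c_k\|e_k\|\le\epsilon_k\min(1,\|(x_{k+1},\lambda_{k+1})-(x_k,\lambda_k)\|^r)$, which is \eqref{estimates2}.

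The step that will require the most care is the last one: one must correctly compute $\partial_xL$ and $\partial_\lambda(-L)$, recognize that the proximal value $w$ is a subgradient of $\phi^\ast$ at $\lambda_{k+1}$, and confirm that the perturbation turning $(x_k,\lambda_k)$ into a point whose proximal image is exactly $(x_{k+1},\lambda_{k+1})$ lies entirely in the $x$-slot and equals exactly $c_ke_k$. Everything else is routine bookkeeping, and the nonexpansiveness argument then closes the proof.
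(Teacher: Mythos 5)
Your proposal is correct and follows essentially the same route as the paper: you exhibit $(x_{k+1},\lambda_{k+1})$ as the exact proximal point $P_k(x_k+c_ke_k,\lambda_k)$ of an input perturbed only in the $x$-slot by $c_k$ times the residual, and then invoke nonexpansiveness; the only cosmetic difference is that you obtain the dual inclusion $c_k^{-1}(\lambda_k-\lambda_{k+1})\in\partial\phi^\ast(\lambda_{k+1})-Ex_{k+1}$ from the optimality condition of the prox of $\phi$ plus the conjugate subgradient relation, whereas the paper rewrites $\lambda_{k+1}=\prox{c_k\phi^\ast}(\lambda_k+c_kEx_{k+1})$ via the Moreau decomposition and uses the optimality condition of that problem. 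These are equivalent, and the rest of your argument matches the paper's step for step.
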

\begin{proof}
The proof is adapted from \cite[Prop.~8]{Rockafellar-Monooperproxpoin:76a}, where the inequality-constrained optimization problem \eqref{min_ineq} was treated.

Let $w$ be $w= \nabla_x \AL{x_{k+1}}{\lambda_k}{c_k}+ c_k^{-1}(x_{k+1} - x_k)$. We show that
\begin{equation}\label{approx_P}
(x_{k+1},\lambda_{k+1}) = P_k(c_kw+x_k,\lambda_k).
\end{equation}
From \eqref{gradL1} and the definition of $\lambda_{k+1}$ and the ordinary Lagrangian $L$, it follows that
	\begin{equation*}
	\nabla_x\AL{x_{k+1}}{\lambda_k}{c_k} = \nabla_x f(x_{k+1}) + E^{\top}\lambda_{k+1} = \partial_x L(x_{k+1},\lambda_{k+1}).
	\end{equation*}
Consequently, we have
\begin{equation}\label{x}
  w + c^{-1}(x_k-x_{k+1}) = \partial_x L(x_{k+1},\lambda_{k+1}).
\end{equation}	
However, since $\lambda_{k+1}=\lambda_k+c_k(Ex_{k+1}-\prox{\phic}(Ex_{k+1}+\lambda_k/c_k))=\prox{c_k\phi^\ast}(\lambda_k + c_kEx_{k+1})$,
it follows that  $\lambda_{k+1}$ is the solution of the minimization problem
	\begin{equation*}
	\lambda_{k+1} = \arg  \min_{\mu} \left(\frac{1}{2c_k}\Vert \mu - (\lambda_k+c_k Ex_{k+1})\Vert^2  + \phi^\ast(\mu)\right),
	\end{equation*}
whose optimality condition is given by
\begin{equation*}
0 \in \partial \phi^\ast(\lambda_{k+1}) + c_k^{-1}(\lambda_{k+1} - (c_kEx_{k+1}+\lambda_k)).
\end{equation*}
By the definition of $L(x,\lambda)$, this is equivalent to
	\begin{align}\label{lambda}
c^{-1}_k(\lambda_{k} -\lambda_{k+1})\in \partial_\lambda(- L)(x_{k+1},\lambda_{k+1}).
	\end{align}
	From \eqref{x} and \eqref{lambda}, we have
$(w + \frac{x_k - x_{k+1}}{c_k},\frac{\lambda_k-\lambda_{k+1}}{c_k})   \in   T_L(x_{k+1},\lambda_{k+1})$,
which is equivalent to
$(c_kw + x_{k},\lambda_k)\in (I + c_kT_L)(x_{k+1},\lambda_{k+1})$.
Thus, we have $P_k(c_kw+x_k,\lambda_k)=(x_{k+1},\lambda_{k+1})$. 
This proves the claim \eqref{approx_P}.

Since $P_k$ is nonexpansive (cf. \eqref{nonexpansive}), we have
	\begin{align*}
	\Vert (x_{k+1},\lambda_{k+1})-P_k(x_k,\lambda_k)\Vert& = \Vert P_k(c_kw+x_k,\lambda_k)-P_k(x_k,\lambda_k)\Vert \\
& \le \Vert (c_kw+x_k,\lambda_k)-(x_k,\lambda_k)\Vert = c_k\Vert w\Vert.
	\end{align*}

Finally, the estimate \eqref{estimates2} readily follows from \eqref{ppm_stop1} and \eqref{estimates1}.
\end{proof}
We are now in the position to prove Theorem~\ref{thm:ppm}.
\begin{proof}[Proof of Theorem~\ref{thm:ppm}]
	We first show that there exists at least one solution to $0\in T_L(x,\lambda)$.
	From (A1) and (A2) in section~\ref{sec:dual_Lagrangian}, it follows that there exist optimal solutions for both \eqref{min} and \eqref{max}.
	By virtue of Proposition \ref{prop:saddle}, a pair of optimal solutions is a saddle point of $L$ and vice versa.
	Clearly, $(x,\lambda)$ is a saddle point of $L$ if and only if $0\in \partial_x L(x,\lambda)$ and $0\in \partial_\lambda (-L)(x,\lambda)$.
	The inclusions are equivalently written as $0\in T_L(x,\lambda)$.
	Hence, the assertion is true.
	
	In the above argument, we have also proven that the set of solutions to $0\in T_L(x,\lambda)$ is identical to the set of pairs of optimal solutions for \eqref{min} and \eqref{max}, which is also identical to the set of saddle points of $L$. Hence, it is sufficient to show that the sequence converges to a solution to $0\in T_L(x,\lambda)$.
	However, since $x_{k+1}$ satisfies the estimates \eqref{ppm_stop1}, it follows from Proposition~\ref{lem_P} that
	\begin{equation*}
	\Vert (x_{k+1},\lambda_{k+1}) - P_k(x_k,\lambda_k)\Vert \le \epsilon_k\min(1,\Vert (x_{k+1},\lambda_{k+1}) -  (x_{k},\lambda_{k})\Vert^r ),
	\end{equation*}
	and thus, by Theorem~\ref{thm:mono_converge}, we can conclude that
	the sequence $\{(x_k,\lambda_k)\}_k$ converges to a solution to $0\in T_L(x,\lambda)$.
	This completes the proof.
\end{proof}
At the end of this section, we provide two interpretations of the proximal point $P_k(x_k,\lambda_k)$.
\begin{corollary}\label{cor:P}
The following assertions are valid:
	\begin{enumerate}
		\item Let us denote $(\bar{x}_k,\bar{\lambda}_k) = P_k(x_k,\lambda_k)$. Then, $\bar{x}_k$ is a unique minimizer of \eqref{ppm_ag2} and $\bar{\lambda}_k = \lambda_k + c_k(E\bar{x}_k -\prox{\phi/c_k}(E\bar{x}_k+\lambda_k/c_k))$.
		\item  Let us define the regularized Lagrangian $\mathcal{L}_k$ by
		\begin{equation*}
		\mathcal{L}_k(x,\lambda):=L(x,\lambda) + 1/(2c_k)\Vert x -x_k\Vert^2 - 1/(2c_k)\Vert \lambda - \lambda_k\Vert^2.
		\end{equation*}
		Then, $P_k(x_k,\lambda_k)$ is the unique saddle point of $\mathcal{L}_k$.
		\end{enumerate}
\end{corollary}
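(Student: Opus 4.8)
The plan is to obtain both assertions purely by unfolding the definition of the resolvent $P_k=(I+c_kT_L)^{-1}$ and matching it against the first-order optimality conditions of the two problems at hand. Since $T_L$ is maximal monotone, $P_k$ is single-valued on all of $\R^n\times\R^m$, and this single-valuedness is what will supply the uniqueness statements with no extra work. For assertion (1) I would piggyback on the computation already carried out in the proof of Proposition~\ref{lem_P}, specialized to the \emph{exact} minimizer; for assertion (2) I would translate ``$(\bar x,\bar\lambda)=P_k(x_k,\lambda_k)$'' into a pair of subdifferential inclusions and recognize them as the stationarity conditions of $\mathcal{L}_k$.

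\textbf{Assertion (1).} First note that the objective in \eqref{ppm_ag2} is $1/c_k$-strongly convex: by Proposition~\ref{prop:KKT}(2) the map $x\mapsto\AL{x}{\lambda_k}{c_k}$ is convex and differentiable, and adding $\tfrac{1}{2c_k}\|x-x_k\|^2$ makes it strongly convex, so \eqref{ppm_ag2} has a unique minimizer $\bar x_k$ determined by $\nabla_x\AL{\bar x_k}{\lambda_k}{c_k}+c_k^{-1}(\bar x_k-x_k)=0$. Now invoke the argument in the proof of Proposition~\ref{lem_P} with the (exact) choice $x_{k+1}=\bar x_k$ and $\lambda_{k+1}$ given by \eqref{ppm_ag2_2}: there the auxiliary vector $w=\nabla_x\AL{\bar x_k}{\lambda_k}{c_k}+c_k^{-1}(\bar x_k-x_k)$ is zero, so either the identity \eqref{approx_P} or the estimate \eqref{estimates1} (whose right-hand side is $c_k\|w\|=0$) gives
\[
\bigl(\bar x_k,\;\lambda_k+c_k(E\bar x_k-\prox{\phi/c_k}(E\bar x_k+\lambda_k/c_k))\bigr)=P_k(x_k,\lambda_k).
\]
Because $P_k$ is single-valued, the left-hand side is exactly $(\bar x_k,\bar\lambda_k)$, which simultaneously identifies $\bar x_k$ as the unique minimizer of \eqref{ppm_ag2} and yields the asserted formula for $\bar\lambda_k$.

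\textbf{Assertion (2).} Write $(\bar x,\bar\lambda)=P_k(x_k,\lambda_k)$. By definition of the resolvent this is equivalent to $(x_k,\lambda_k)\in(I+c_kT_L)(\bar x,\bar\lambda)$, i.e.\ to $c_k^{-1}(x_k-\bar x,\lambda_k-\bar\lambda)\in T_L(\bar x,\bar\lambda)$, which by \eqref{TL} means $c_k^{-1}(x_k-\bar x)\in\partial_xL(\bar x,\bar\lambda)$ and $c_k^{-1}(\lambda_k-\bar\lambda)\in\partial_\lambda(-L)(\bar x,\bar\lambda)$. On the other hand, since the two quadratic penalties are everywhere differentiable, $\partial_x\mathcal{L}_k(x,\lambda)=\partial_xL(x,\lambda)+c_k^{-1}(x-x_k)$ and $\partial_\lambda(-\mathcal{L}_k)(x,\lambda)=\partial_\lambda(-L)(x,\lambda)+c_k^{-1}(\lambda-\lambda_k)$, so the two inclusions above are precisely $0\in\partial_x\mathcal{L}_k(\bar x,\bar\lambda)$ and $0\in\partial_\lambda(-\mathcal{L}_k)(\bar x,\bar\lambda)$. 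Since $L$ is convex in $x$, concave in $\lambda$, and closed proper (as recorded in section~\ref{sec:dual_Lagrangian}), the same holds for $\mathcal{L}_k$; indeed $\mathcal{L}_k(\cdot,\lambda)$ is strongly convex and $\mathcal{L}_k(x,\cdot)$ strongly concave. For such a function the stationarity conditions say exactly that $\bar x$ globally minimizes $\mathcal{L}_k(\cdot,\bar\lambda)$ and $\bar\lambda$ globally maximizes $\mathcal{L}_k(\bar x,\cdot)$, i.e.\ $\mathcal{L}_k(\bar x,\lambda)\le\mathcal{L}_k(\bar x,\bar\lambda)\le\mathcal{L}_k(x,\bar\lambda)$ for all $x,\lambda$; conversely any saddle point of $\mathcal{L}_k$ satisfies the stationarity conditions, hence the $T_L$-inclusion, hence equals $P_k(x_k,\lambda_k)$. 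Single-valuedness of $P_k$ then gives uniqueness of the saddle point.

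\textbf{Where the work is.} There is no genuine obstacle; the proof is essentially bookkeeping with subdifferentials. The one place that deserves explicit care is the equivalence between first-order stationarity of $\mathcal{L}_k$ and the true saddle-point inequalities, which relies on the partial convexity/concavity and closedness of $\mathcal{L}_k$ together with the (trivial) sum rule $\partial_x(L(\cdot,\lambda)+q)=\partial_xL(\cdot,\lambda)+\nabla q$ for the finite, everywhere-differentiable quadratic $q$. One should also check that the identity $\partial_xL(x,\lambda)=\nabla f(x)+E^\top\lambda$ used implicitly in assertion (1) is consistent with \eqref{gradL1}, which is exactly the relation established in the proof of Proposition~\ref{lem_P}.
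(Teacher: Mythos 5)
Your proof is correct and follows essentially the same route as the paper: assertion (1) by specializing the computation of Proposition~\ref{lem_P} to the exact minimizer (so that $w=0$ in \eqref{estimates1}), and assertion (2) by unfolding the resolvent into the stationarity conditions of $\mathcal{L}_k$ and using strict convexity/concavity to pass to the saddle-point inequalities. Your appeal to strong (rather than merely strict) convexity in (1) is in fact a slight tightening, since it also guarantees existence of the minimizer via coercivity.
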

\begin{proof}
From Proposition~\ref{prop:KKT}, we know that the augmented Lagrangian $L_{c_k}(x,\lambda_k)$ is convex with respect to $x$.
Since $1/(2c_k)\Vert x - x_k\Vert^2$ is strictly convex, the function $L_{c_k}(x,\lambda_k)+1/(2c_k)\Vert x - x_k\Vert^2$ is strictly convex and thus
has a unique minimizer.  Let $x^\ast_k$ be the unique minimizer and $\lambda^\ast_k=\lambda_k+c_k(Ex^\ast_k-\prox{\phi/c_k}(Ex^\ast_k+\lambda_k/c_k))$. From the optimality condition, it follows that
\begin{equation*}
\nabla_x \AL{x^\ast_k}{\lambda_k}{c_k}+ c_k^{-1}(x^\ast_{k} - x_k)=0.
\end{equation*}
However, from \eqref{estimates1}, it follows that $(x^\ast_k,\lambda^\ast_k) = P_k(x_k,\lambda_k)$, which proves the first assertion.

The second assertion is valid from the following equivalences: By definition, $(\bar{x}_k,\bar{\lambda}_k) = (I+c_k T_L)^{-1}(x_k,\lambda_k)$. This is equivalently written as $ \left(c^{-1}_k({x_k-\bar{x}_k}) , c^{-1}_k(\lambda_k-\bar{\lambda}_k) \right)\in T_L(\bar{x}_k,\bar{\lambda}_k)$, which is also written as
\begin{align*}
 &  \left\{\begin{array}{l}
   0\in  \partial_x (L(x,\bar{\lambda}_k) + 1/(2c_k)\Vert x - x_k\Vert^2 )|_{x=\bar{x}_k}, \\[5pt]
   0\in  \partial_\lambda (-L(\bar{x}_k, \lambda) + 1/(2c_k)\Vert \lambda -\lambda_k\Vert^2)|_{\lambda=\bar{\lambda}_k}. 
   \end{array}\right.
 \end{align*}
 This means that $\bar{x}_k$ is the stationary point of $\mathcal{L}_k(\cdot,\bar{\lambda}_k)$ and
  $\bar{\lambda}_k$ is that of $\mathcal{L}_k(\bar{x}_k,\cdot)$; in other words,
  $0\in  \partial_x \mathcal{L}_k(\bar{x}_k,\bar{\lambda}_k)$
  and
  $0\in  \partial_\lambda (\mathcal{-L}_k)(\bar{x}_k,\bar{\lambda}_k)$.
  Since $\mathcal{L}_k(x,\bar{\lambda}_k)$ and $\mathcal{-L}_k(\bar{x}_k,\lambda)$ are strictly convex with respect to $x$ and $\lambda$, respectively, it follows that
\begin{align*}
   \left\{\begin{array}{rl}
   \mathcal{L}_k(x,\bar{\lambda}_k)\ge   \mathcal{L}_k(\bar{x}_k,\bar{\lambda}_k) + ( 0, x-\bar{x}_k) & \forall x\in \R^n,
  \\
   -\mathcal{L}_k(\bar{x}_k,\lambda)\ge   -\mathcal{L}_k(\bar{x}_k,\bar{\lambda}_k) + ( 0, \lambda-\bar{\lambda}_k) & \forall \lambda\in \R^m,
    \end{array}\right.
\end{align*}
where the equalities hold if and only if $x=\bar{x}_k$ and $\lambda=\bar{\lambda}_k$. Consequently, we have
$\mathcal{L}_k(\bar{x}_k,\lambda)   \le   \mathcal{L}_k(\bar{x}_k,\bar{\lambda}_k) \le  \mathcal{L}_k(x,\bar{\lambda}_k)$,
for all $(x,\lambda)\in \R^n\times \R^m$.
This implies that the proximal point $(\bar{x}_k,\bar{\lambda}_k)$ is the unique saddle point of the regularized Lagrangian $\mathcal{L}_k$.
\end{proof}
We readily see from Corollary~\ref{cor:P} that the proximal method of multipliers with exact minimization is the realization of the proximal point algorithm with exact evaluation of $P_k(x_k,\lambda_k)$.
\begin{theorem}
The proximal point algorithm with exact minimization
 \begin{equation*}
 (x_{k+1},\lambda_{k+1})=P_k(x_k,\lambda_k)
  \end{equation*}
 is equivalent to the proximal method of multipliers with exact minimization
\begin{align*}
\left\{
\begin{array}{rcl}
x_{k+1}& = &\displaystyle \arg \min_{x} \AL{x}{\lambda_k}{c_k} + 1/(2c_k)\Vert x-x_k\Vert^2, \\[5pt]
\lambda_{k+1} &= &\displaystyle \lambda_k + c_k(Ex_{k+1}-\prox{\phi/c_k}(Ex_{k+1}+\lambda_k/c_k)).
\end{array}\right.
\end{align*}
\end{theorem}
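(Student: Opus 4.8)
The plan is to read the statement straight off Corollary~\ref{cor:P}(1), together with the computation already performed in the proof of Proposition~\ref{lem_P}, treating the two directions of the equivalence in turn.

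For the forward direction I would start from $(x_{k+1},\lambda_{k+1}) = P_k(x_k,\lambda_k)$ and simply invoke Corollary~\ref{cor:P}(1) with $(\bar{x}_k,\bar{\lambda}_k) = (x_{k+1},\lambda_{k+1})$: it tells us that $x_{k+1}$ is the unique minimizer of \eqref{ppm_ag2} and that $\lambda_{k+1} = \lambda_k + c_k(Ex_{k+1} - \prox{\phi/c_k}(Ex_{k+1}+\lambda_k/c_k))$, which is exactly the pair of updates defining the proximal method of multipliers with exact minimization.

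For the converse, suppose $x_{k+1}$ is the minimizer of \eqref{ppm_ag2} and $\lambda_{k+1}$ is given by \eqref{ppm_ag2_2}. I would first record that this minimizer is unique: by Proposition~\ref{prop:KKT} the map $\AL{\cdot}{\lambda_k}{c_k}$ is convex, and adding the strictly convex proximal term $1/(2c_k)\Vert x - x_k\Vert^2$ makes the objective strictly convex, so the $\arg\min$ is unambiguous. The first-order optimality condition for \eqref{ppm_ag2} then reads $\nabla_x\AL{x_{k+1}}{\lambda_k}{c_k} + c_k^{-1}(x_{k+1}-x_k) = 0$; in the notation of the proof of Proposition~\ref{lem_P} this says the vector $w$ vanishes. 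Substituting $w = 0$ into the identity \eqref{approx_P} (equivalently, into the estimate \eqref{estimates1}, whose right-hand side then equals zero) yields $(x_{k+1},\lambda_{k+1}) = P_k(x_k,\lambda_k)$.

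Finally I would note that both iterations are single-valued maps on all of $\R^n\times\R^m$ — the first because $T_L$ is maximal monotone, so $P_k=(I+c_kT_L)^{-1}$ is everywhere defined, the second because the inner subproblem has a unique solution — and since they agree pointwise, they generate the same sequence from any common initial pair. I do not expect a genuine obstacle here; the substantive content is already packaged in Proposition~\ref{lem_P} and Corollary~\ref{cor:P}. The only point that needs care is the backward direction, where one must use the uniqueness of the inner minimizer (supplied by the proximal regularization) both to make sense of the $\arg\min$ and to identify its optimality condition with the equation $w=0$.
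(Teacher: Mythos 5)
Your proposal is correct and matches the paper's own (implicit) argument: the paper derives this theorem directly from Corollary~\ref{cor:P}(1), whose proof likewise uses the strict convexity of the regularized subproblem, the optimality condition $\nabla_x \AL{x^\ast_k}{\lambda_k}{c_k}+c_k^{-1}(x^\ast_k-x_k)=0$, and the identity behind \eqref{estimates1} to conclude $(x^\ast_k,\lambda^\ast_k)=P_k(x_k,\lambda_k)$. Your explicit treatment of both directions and of single-valuedness is just a slightly more spelled-out version of the same reasoning.
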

\section{Newton method for solving the subproblem}
 In the previous section, we established the convergence of the proximal method of multipliers, in which we assumed that
an approximation $x_{k+1}$ to the inner minimization problem \eqref{ppm_ag2} is available.
In this section, we develop a numerical method for computing $x_{k+1}$ satisfying the estimate \eqref{ppm_stop1}.
Let us recall the problem \eqref{ppm_ag2}:
\begin{equation}\label{min_inner}
  \min_{\xi}\psi(\xi):=\AL{\xi}{\lambda}{c} + 1/(2c)\Vert \xi - x\Vert^2.
\end{equation}
We use the symbol $\xi$ to denote the variable of $\psi$ to be minimized, and we drop the subscript $k$ from $x_k,\lambda_k$ and $c_k$ for simplicity.
Let $\xi^\ast$ denote the first $n$ components of the proximal point $P_k(x,\lambda)$. Recall that $\xi^\ast$ is the unique minimizer of \eqref{min_inner} (see Corollary~\ref{cor:P}), i.e., $\xi^\ast = \arg\min_\xi \psi(\xi)$.  Apparently, the necessary and sufficient optimality condition for $\xi^\ast$ to be
the minimizer for $\psi$ is given as
\begin{equation}\label{equ:opt}
\nabla \psi(\xi^\ast )=0.
\end{equation}
Now, we describe an algorithm for solving \eqref{equ:opt} and state
results on the convergence properties. For this purpose, we need the
concepts of the generalized Jacobian and semismoothness. \\
\textbf{Generalized Jacobian.} Let $\Phi \colon \R^m\rightarrow \R^n$ be a locally Lipschitz continuous map. Rademacher's
Theorem \cite[Sect.~3.1.2]{Evans+Gariepy-Meastheofineprop:92} states that a locally continuous map is
differentiable almost everywhere. Denote by $N_{\Phi}$ a set of measure zero such that $\Phi$ is differentiable
on $\R^m\setminus N_{\Phi}$. The \textit{limiting Jacobian} of $\Phi$ at $\xi$ is the set
\begin{equation*}
\partial_B \Phi(\xi) := \left\{G \in \R^{n\times m} \mid  \exists \{\xi^k\} \subset \R^m\setminus N_{\Phi} \mbox{ with } \xi^k\rightarrow \xi,
D_\xi\Phi(\xi^k) \rightarrow G\right\}.
\end{equation*}
\textit{(Clarke's) generalized Jacobian} $\partial\Phi(\xi)$ of $\Phi$ at $\xi \in\R^m$ is the convex hull of the limiting Jacobian:
\begin{equation*}
\partial \Phi(\xi) = \mbox{conv}(\partial_B\Phi(\xi)).
\end{equation*}
We denote by $\partial_B \Phi$ the set-valued map $\xi \rightarrow \partial_B\Phi(\xi)$ for $\xi\in\R^m$. The set-valued map $\partial\Phi$ for the generalized Jacobian is defined analogously. \\
\textbf{Semismoothness.} Let $\Phi: \R^n \rightarrow \R^m$ be a locally Lipschitz continuous map.
We say that $\Phi$ is semismooth at a point $\bar{x}$ if $\Phi$ is directionally differentiable near $\bar{x}\in \R^n$ and
\begin{equation*}
 \lim_{\substack{\bar{x} \neq x\to  \bar{x} \\  G\in \partial \Phi(x)}} \frac{\Vert \Phi(x)+G(\bar{x} - x) - \Phi(\bar{x})\Vert}{\Vert x  - \bar{x}\Vert} = 0.
\end{equation*}
If the above requirement is strengthened to
\begin{equation*}
 \limsup_{\substack{\bar{x} \neq x\to  \bar{x} \\  G\in \partial \Phi(x)}} \frac{\Vert \Phi(x)+G(\bar{x} - x) - \Phi(\bar{x})\Vert}{\Vert x  - \bar{x}\Vert^2} <\infty.
\end{equation*}
we say that $\Phi$ is strongly semismooth at $\bar{x}$. If $\Phi$ is (strongly) semismooth at each point of a subset $\Omega\subset \R^n$, we say that $\Phi$ is (strongly) semismooth on $\Omega$.

We now consider the generalized Jacobian of the map $\nabla\psi$. Here, we recall that
\begin{equation*}
\nabla\psi(\xi) = \nabla f(\xi) +c  E^{\top}( E \xi + \lambda/c -\prox{\phi/c}( E \xi + \lambda/c))+(\xi-x)/c.
\end{equation*}
 From the chain rule of the generalized Jacobian \cite[Thm.~4]{Imbert-SuppfuncClargene:02}, it follows that
\begin{equation*}
  \partial(E^{\top}\prox{\phi/c}(E\xi+\lambda/c))
 \subset \{E^{\top} G E \mid G\in \partial (\prox{\phi/c})(E\xi+\lambda/c)\},
\end{equation*}
and thus,
\begin{align}\label{def:T}
\partial(\nabla \psi)(\xi)
\subset T(\xi): = \{ \nabla^2 f(\xi) + c^{-1} I + c E^{\top}(I - G)E \mid G\in \partial (\prox{\phi/{c}})(E\xi+\lambda/c)\}.
\end{align}
With these settings, we formulate a Newton-type method with line search (Algorithm~\ref{alg:2}). 
\renewcommand{\thealgorithm}{2}
\begin{algorithm}
\caption{Newton method with line search for the inner problem \eqref{min_inner}}\label{alg:2}
\begin{algorithmic}[1]
\STATE Input: $x\in \R^n$, $\lambda\in\R^m$, $c>0$, $\gamma\in(0,1/2)$, $\rho\in(0,1)$.
\STATE Initialize $\xi_0 \in\R^n$ (e.g., $\xi_0 = x$).
\STATE Select $V_\ell \in T(\xi_\ell)$ and find $d_\ell$ satisfying
\begin{equation*}
V_\ell d_\ell = -\nabla\psi(\xi_\ell).
\end{equation*}
\STATE Find the smallest nonnegative integer $i$ such that
\begin{equation}\label{arjimo}
  \psi(\xi_\ell+\rho^i d_\ell)\le \psi(\xi_\ell) + \gamma\rho^i \nabla \psi(\xi_\ell)^{\top} d_\ell,
\end{equation}
and set $\tau_\ell = \rho^i$.
\STATE Set $\xi_{\ell+1} = \xi_{\ell}+\tau_\ell d_\ell$.
\STATE
Set $\ell \leftarrow \ell+1$ and return to Step 3.
\end{algorithmic}
\end{algorithm}

The implementation of Newton's method requires the explicit representation of the Jacobians of the proximal mapping $\prox{\phic}(z)$.
We refer the reader to the article \cite{Patrinos+StellaETAL-ForwtrunNewtmeth:14} for the closed forms of a variety of proximal mappings
that frequently appear in practical applications.

The next theorem states the convergence results of Algorithm~\ref{alg:2}.
This theorem guarantees that an approximation $x_{k+1}$ to $P_k(x_k,\lambda_k)$ satisfying the criterion \eqref{ppm_stop1} is obtained within a finite number of iterations.
\begin{theorem}\label{thm:converge_alg2} Let $\xi^\ast$ be the global minimizer of $\psi$.
Let $\{\xi_\ell\}$ be a sequence of iterates by Algorithm~\ref{alg:2}. The following statements are valid.
\begin{enumerate}\renewcommand{\labelenumi}{(\roman{enumi})}
\item The sequence globally converges to $\xi^\ast$.
\item If $\prox{\phi/c}$ is semismooth at  $\xi^\ast$, then a unit step size $\tau_\ell = 1$ in the Armijo rule
  is eventually accepted, i.e., there exists $\ell_0$ such that $\tau_\ell =1 $ for all $\ell\ge \ell_0$.
\item Under the above assumptions, the convergence rate is Q-superlinear.
In addition, if
$\prox{\phi/c}$ is strongly semismooth at $\xi^\ast$, then the convergence rate is Q-quadratic.
\end{enumerate}
\end{theorem}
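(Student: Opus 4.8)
The plan is to recognize Algorithm~\ref{alg:2} as a damped (line-search) semismooth Newton method applied to the strongly convex $C^1$ function $\psi$ of \eqref{min_inner}, and to prove the three assertions in turn, after recording two structural facts. First, $\psi$ is strongly convex with modulus $1/c$: by Proposition~\ref{prop:KKT} the map $\xi\mapsto \AL{\xi}{\lambda}{c}$ is convex, and $\tfrac1{2c}\Vert\xi-x\Vert^2$ is $1/c$-strongly convex. Hence $\psi$ is coercive, $\xi^\ast$ is its unique minimizer (consistent with Corollary~\ref{cor:P}), all sublevel sets are compact, and since \eqref{arjimo} forces $\psi(\xi_{\ell+1})\le\psi(\xi_\ell)$, the iterates remain in the compact set $\Omega_0=\{\psi\le\psi(\xi_0)\}$ and all line-search trial points remain in a slightly enlarged compact set $\Omega_1$. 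Second, every $V\in T(\xi)$ in \eqref{def:T} is symmetric with $c^{-1}I\preceq V$: $\nabla^2 f(\xi)\succeq0$ by convexity of $f$, and since $\prox{\phic}=I-\tfrac1c\nabla\phi_c$ with $\phi_c$ convex and $C^{1,1}$, every $G\in\partial\prox{\phic}$ is symmetric with $0\preceq G\preceq I$, so $cE^\top(I-G)E\succeq0$; on $\Omega_1$ continuity of $\nabla^2 f$ gives a uniform upper bound $V\preceq MI$ as well. Consequently each $V_\ell$ is invertible, $\Vert V_\ell^{-1}\Vert\le c$, the Newton direction $d_\ell=-V_\ell^{-1}\nabla\psi(\xi_\ell)$ is well defined, and it satisfies the uniform angle and length bounds $\nabla\psi(\xi_\ell)^\top d_\ell\le -M^{-1}\Vert\nabla\psi(\xi_\ell)\Vert^2$ and $M^{-1}\Vert\nabla\psi(\xi_\ell)\Vert\le\Vert d_\ell\Vert\le c\Vert\nabla\psi(\xi_\ell)\Vert$.

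\textbf{Proof of (i).} With $d_\ell$ a descent direction satisfying a uniform angle condition, and with $\nabla\psi$ Lipschitz on the compact set $\Omega_1$, the Armijo search in Step~4 terminates with a finite exponent (a suitable $\rho^i$ exists since $\psi\in C^1$ and $d_\ell$ is a descent direction), and the standard line-search/Zoutendijk argument for globally convergent Newton-type methods (see, e.g., \cite{Pang+Qi-Nonsequa:93}) gives $\Vert\nabla\psi(\xi_\ell)\Vert\to0$. Since $\{\xi_\ell\}\subset\Omega_0$ is bounded and $\xi^\ast$ is the unique zero of $\nabla\psi$, every accumulation point equals $\xi^\ast$, so $\xi_\ell\to\xi^\ast$.

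\textbf{Proof of (ii)--(iii).} Assume $\prox{\phic}$ is semismooth at $z^\ast:=E\xi^\ast+\lambda/c$. Affine precomposition, left multiplication by $cE^\top$, and addition of the $C^1$ map $\nabla f(\cdot)+c^{-1}(\cdot-x)$ all preserve semismoothness, so $\nabla\psi$ is semismooth at $\xi^\ast$ with respect to the multifunction $T$; concretely, using the chain-rule inclusion \eqref{def:T} and $\nabla\psi(\xi^\ast)=0$, one gets $\Vert\nabla\psi(\xi_\ell)-V_\ell(\xi_\ell-\xi^\ast)\Vert=o(\Vert\xi_\ell-\xi^\ast\Vert)$ for every $V_\ell\in T(\xi_\ell)$ (and $=O(\Vert\xi_\ell-\xi^\ast\Vert^2)$ when $\prox{\phic}$ is strongly semismooth, provided $\nabla^2 f$ is locally Lipschitz). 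Together with $\Vert V_\ell^{-1}\Vert\le c$ this yields $\Vert\xi_\ell+d_\ell-\xi^\ast\Vert=\Vert V_\ell^{-1}(\nabla\psi(\xi_\ell)-V_\ell(\xi_\ell-\xi^\ast))\Vert=o(\Vert\xi_\ell-\xi^\ast\Vert)$, i.e. the undamped Newton step converges Q-superlinearly (Q-quadratically in the strongly semismooth case), and in particular $\Vert d_\ell\Vert=\Vert\xi_\ell-\xi^\ast\Vert(1+o(1))=:e_\ell(1+o(1))$. For (ii), from Lipschitz continuity of $\nabla\psi$ near $\xi^\ast$ and $\nabla\psi(\xi^\ast)=0$ one has $\psi(\xi_\ell+d_\ell)-\psi(\xi^\ast)\le\tfrac{L}{2}\Vert\xi_\ell+d_\ell-\xi^\ast\Vert^2=o(e_\ell^2)$; combining this with the key second-order expansion along the Newton direction, $\psi(\xi_\ell+d_\ell)-\psi(\xi_\ell)=\tfrac12\nabla\psi(\xi_\ell)^\top d_\ell+o(\Vert d_\ell\Vert^2)$, the Armijo gap in \eqref{arjimo} at $\tau_\ell=1$ equals $(\tfrac12-\gamma)\nabla\psi(\xi_\ell)^\top d_\ell+o(\Vert d_\ell\Vert^2)\le -(\tfrac12-\gamma)c^{-1}\Vert d_\ell\Vert^2+o(\Vert d_\ell\Vert^2)$, which is negative for all large $\ell$ since $\nabla\psi(\xi_\ell)^\top d_\ell=-d_\ell^\top V_\ell d_\ell\le -c^{-1}\Vert d_\ell\Vert^2$ and $\gamma<1/2$. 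Hence $\tau_\ell=1$ is accepted for all $\ell\ge\ell_0$, proving (ii); from $\ell_0$ onward the iteration coincides with the undamped semismooth Newton method, so (iii) follows from the rate established above. Finally, since $\nabla\psi(\xi_\ell)\to0$, the accepted iterate eventually satisfies the stopping criterion \eqref{ppm_stop1} demanded by Algorithm~\ref{alg:1}.

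\textbf{Expected main obstacle.} The subtle point is exactly the second-order expansion $\psi(\xi_\ell+d_\ell)-\psi(\xi_\ell)=\tfrac12\nabla\psi(\xi_\ell)^\top d_\ell+o(\Vert d_\ell\Vert^2)$ used in (ii): the crude bound $\psi(\xi_\ell)-\psi(\xi^\ast)\ge\tfrac1{2c}e_\ell^2$ together with $d_\ell^\top V_\ell d_\ell\le M\Vert d_\ell\Vert^2$ only gives eventual acceptance of the unit step for $\gamma<\tfrac1{2cM}$, whereas the statement allows any $\gamma\in(0,\tfrac12)$; obtaining the sharp constant $\tfrac12$ (the one that makes Newton finish a quadratic in a single step) requires integrating the semismoothness estimate for $\nabla\psi$ along the Newton segment $[\xi_\ell,\xi_\ell+d_\ell]$, rather than merely using Lipschitz continuity of $\nabla\psi$. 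A lesser caveat is that the Q-quadratic claim in (iii) tacitly needs local Lipschitz continuity of $\nabla^2 f$, marginally stronger than the standing assumption that $f\in C^2$.
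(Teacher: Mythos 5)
Your proposal is correct and follows essentially the same route as the paper: global convergence from the uniform bound $c^{-1}I\preceq V_\ell$ together with coercivity of $\psi$ and a descent/accumulation-point argument, unit-step acceptance from the superlinear convergence of the Newton direction (which rests on transferring the semismoothness of $\prox{\phi/c}$ to $\nabla\psi$ through the set $T(\xi)$), and the local rates from semismooth-Newton theory; the only real difference in presentation is that the paper formalizes your ``semismoothness with respect to the multifunction $T$'' as a linear Newton approximation scheme (necessary precisely because $T(\xi)$ may strictly contain $\partial(\nabla\psi)(\xi)$) and cites Facchinei--Pang Prop.~8.3.18 for the unit-step acceptance where you sketch the second-order expansion along the Newton segment directly. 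Your closing caveat is apt: the Q-quadratic claim does require local Lipschitz continuity of $\nabla^2 f$ (not merely $f\in C^2$), a point the paper itself glosses over when it asserts the $O(\Vert\xi-\eta\Vert^2)$ remainder for the smooth part in the proof of its strong-LNA proposition.
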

\begin{remark}
Theorem~8.3.19 in \cite[Ch.~8]{Facchinei+Pang-Finivariineqcomp:03} considers a similar algorithm, where $V_\ell\in\partial_B(\nabla \psi)(\xi)$ is assumed. Note that $\partial_B(\nabla\psi)(\xi) \subset T(\xi)$ is always true but $\partial_B(\nabla\psi)(\xi) = T(\xi)$ does not necessarily hold, which implies that $V_\ell \in T(\xi)$ may not be a member of $\partial_B(\nabla\psi)(\xi)$. For this reason,
Theorem~8.3.9 cannot be directly applied to prove the convergence of Algorithm~\ref{alg:2}. We need the concept of \textit{linear Newton approximation} (LNA for short) and the convergence result of a \textit{linear Newton method}, which will be discussed below. For a comprehensive treatment and for further references on these subjects, one may refer to \cite[Ch.~7]{Facchinei+Pang-Finivariineqcomp:03}.
\end{remark}
\begin{remark}
The Newton-type method (Algorithm~\ref{alg:2}) was implemented in  \cite{Tomioka+SuzukiETAL-SupeConvDualAugm:11} for solving subproblems \eqref{step1_dal} of the method of multipliers (the augmented Lagrangian method by Fortin). However, the article did not investigate the global convergence or the convergence rate of the Newton-type method.
\end{remark}
\subsection{Convergence of the Newton method} 
In this section, we provide a proof of Theorem~\ref{thm:converge_alg2}.
Throughout this section, we use the following notations: for a given sequence $\{a_\ell\}_{\ell}$, we denote a subsequence by $\{a_{\ell}\}_{\ell \in \chi}$, where $\chi$ is a subset of $\mathbb{N}$. Let $A\in\R^{n,n}$ and $B\in\R^{n,n}$ be symmetric positive semidefinite matrices. We denote $A\preceq B$ if $(d,Ad)\le (d,Bd)$ for all $d\in \R^n$.
\subsubsection{The proof of Theorem~\ref{thm:converge_alg2} (i)}
To show the global convergence of Algorithm~\ref{alg:2}, we need the result from \cite[Thm.~3.2]{Patrinos+StellaETAL-ForwtrunNewtmeth:14} that states the basic property of the generalized Jacobian of the proximal mapping.
\begin{lemma}[{\cite[Thm.~3.2]{Patrinos+StellaETAL-ForwtrunNewtmeth:14}}]\label{lem:jacobian}
	For any $\phi \in \Gamma_0(\R^m)$, every $G \in \partial(\prox{\phic})(z)$ is a symmetric positive semidefinite matrix with
	$\Vert G\Vert \le 1$.
\end{lemma}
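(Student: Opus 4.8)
The plan is to reduce the entire statement to one structural fact—that $P:=\prox{\phic}$ is the gradient of a convex, continuously differentiable potential—and then to read off symmetry, positive semidefiniteness, and the norm bound from the second-order behaviour of that potential on a set of full measure. First I would identify the potential. Using the Differentiability property $\nabla\phi_c(z)=c\bigl(z-\prox{\phic}(z)\bigr)$, set $g(z):=\tfrac12\|z\|^2-\tfrac1c\phi_c(z)$, so that $\nabla g(z)=\prox{\phic}(z)=P(z)$ for every $z$. The Decomposition property $\phi_c(z)+(\phi^\ast)_{1/c}(cz)=\tfrac{c}{2}\|z\|^2$ then gives $g(z)=\tfrac1c(\phi^\ast)_{1/c}(cz)$, i.e.\ $g$ is a rescaled Moreau envelope of $\phi^\ast$; hence $g$ is convex and $C^{1,1}$ (its gradient $P$ is nonexpansive, in particular Lipschitz). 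Thus $P=\nabla g$ is a monotone, nonexpansive gradient map.

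Next I would examine the Jacobian $DP$ wherever it exists. Monotonicity of the gradient gives $(d,DP\,d)\ge 0$ for all $d$ (so the symmetric part of $DP$ is positive semidefinite), and nonexpansiveness (the Nonexpansiveness property of section~\ref{sec:moreau}) gives the operator-norm bound $\|DP\|\le 1$. The one delicate property is symmetry, and this is exactly where the gradient structure must be used at second order. Here I would invoke Alexandrov's theorem for the convex function $g$ \cite[Sect.~6.4]{Evans+Gariepy-Meastheofineprop:92}: there is a set $D$ of full measure on which $g$ is twice differentiable, meaning $\nabla g=P$ is differentiable with a \emph{symmetric} derivative $DP=\nabla^2 g$. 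On $D$ the two bounds above therefore upgrade to $0\preceq DP\preceq I$ (symmetric, positive semidefinite, eigenvalues in $[0,1]$). Because the limiting Jacobian $\partial_B P(z)$ is unchanged if we discard a further null set, I may enlarge the exceptional set $N_P$ so as to exclude the non-Alexandrov points and thereafter compute $\partial_B P(z)$ using only limits $DP(z^k)\to G$ with $z^k\in D$.

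Finally I would pass to the generalized Jacobian by closure and convexity. The set $\mathcal{S}=\{G\in\R^{m\times m}\mid G=G^\top,\ 0\preceq G\preceq I\}$ is closed and convex, and every $G\in\mathcal{S}$ is symmetric positive semidefinite with $\|G\|\le 1$. Since each $DP(z^k)\in\mathcal{S}$ for $z^k\in D$ and the three defining conditions are preserved under matrix limits, we obtain $\partial_B P(z)\subseteq\mathcal{S}$; taking convex hulls and using convexity of $\mathcal{S}$ yields $\partial P(z)=\mathrm{conv}\bigl(\partial_B P(z)\bigr)\subseteq\mathcal{S}$, which is precisely the assertion.

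I expect the main obstacle to be symmetry, and I would flag that it is genuinely a second-order, measure-theoretic fact rather than a pointwise one. Neither firm nonexpansiveness nor pointwise differentiability of the gradient forces $DP$ to be symmetric at an individual point: one can exhibit nonsymmetric $G$ satisfying $\|Gd\|^2\le(d,Gd)$ for all $d$ (for instance $G=\tfrac12\bigl(\begin{smallmatrix}1&1\\-1&1\end{smallmatrix}\bigr)$, which attains equality). Symmetry is recovered only almost everywhere, through Alexandrov's theorem, and it is the robustness of $\partial_B$ under the removal of null sets that allows this a.e.\ symmetry to propagate to every element of the generalized Jacobian. The remaining steps—producing the potential $g$ and checking that $\mathcal{S}$ is closed and convex—are routine given the Moreau-envelope identities already recorded in section~\ref{sec:moreau}.
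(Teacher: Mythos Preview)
The paper does not supply its own proof of this lemma; it simply quotes the result from \cite[Thm.~3.2]{Patrinos+StellaETAL-ForwtrunNewtmeth:14}. Your outline is correct and is essentially the argument given in that reference: exhibit the convex $C^{1,1}$ potential $g$ with $\nabla g=\prox{\phic}$, obtain on a set of full measure the symmetric sandwich $0\preceq DP\preceq I$, and close up under limits and convex hulls into the closed convex set $\mathcal{S}=\{G=G^\top:0\preceq G\preceq I\}$. Your diagnosis that symmetry is the only genuinely nontrivial ingredient, and that it comes from second-order structure of the potential rather than from firm nonexpansiveness alone, is exactly right.

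Two small technical refinements are worth recording. First, Alexandrov's theorem as stated in \cite[Sect.~6.4]{Evans+Gariepy-Meastheofineprop:92} yields a second-order expansion of $g$; it does not by itself assert Fr\'echet differentiability of $\nabla g$ with the (symmetric) Alexandrov matrix as Jacobian. Since here $g$ is already $C^{1,1}$, it is cleaner to apply Rademacher directly to $P=\nabla g$ and then argue that the second distributional partials of $g$ lie in $L^\infty$ and satisfy $\partial_i\partial_j g=\partial_j\partial_i g$ as distributions, hence a.e.\ as functions; this gives symmetry of $DP$ on a full-measure subset of the Rademacher set without invoking Alexandrov. Second, the null-set insensitivity you use is classically proved for the Clarke Jacobian $\partial P=\mathrm{conv}(\partial_B P)$ rather than for $\partial_B P$; since your target $\mathcal{S}$ is already closed and convex, that weaker statement is all you need, and you can pass directly from ``$DP(z)\in\mathcal{S}$ for a.e.\ $z$'' to $\partial P(z)\subseteq\mathcal{S}$ without the intermediate claim about $\partial_B$.
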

Using the result of Lemma~\ref{lem:jacobian}, one can readily show that $V\in T(\xi)$ is symmetric and strictly positive definite:
\begin{lemma}\label{lem:V}
Let $\xi\in\R^n$, and let $T(\xi)$ be the set of matrices defined by \eqref{def:T}.
Every $V\in T(\xi)$ is a symmetric positive semidefinite matrix with
\begin{equation*}
c^{-1}I \preceq V  \preceq   \nabla^2 f(\xi) + (c^{-1}+c\Vert E\Vert^2) I.
\end{equation*}
\end{lemma}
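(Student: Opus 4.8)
The plan is to take an arbitrary $V \in T(\xi)$, write it in the form $V = \nabla^2 f(\xi) + c^{-1}I + cE^\top(I-G)E$ for some $G \in \partial(\prox{\phic})(E\xi + \lambda/c)$ guaranteed by the definition \eqref{def:T}, and then estimate each summand using the quadratic form $d \mapsto (d, Vd)$ for an arbitrary $d \in \R^n$. Symmetry of $V$ is immediate: $\nabla^2 f(\xi)$ is symmetric because $f$ is twice continuously differentiable, $c^{-1}I$ is obviously symmetric, and $G$ is symmetric by Lemma~\ref{lem:jacobian}, so $E^\top(I-G)E$ is symmetric as well. This disposes of the symmetry claim with essentially no work.

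For the lower bound, I would compute $(d, Vd) = (d, \nabla^2 f(\xi)d) + c^{-1}\Vert d\Vert^2 + c(Ed, (I-G)Ed)$. Convexity of $f$ gives $(d, \nabla^2 f(\xi)d) \ge 0$; Lemma~\ref{lem:jacobian} gives $0 \preceq G$ and $\Vert G \Vert \le 1$, hence $I - G \succeq 0$ (its eigenvalues lie in $[1-\Vert G\Vert, 1] \subseteq [0,1]$), so $(Ed, (I-G)Ed) \ge 0$. Therefore $(d, Vd) \ge c^{-1}\Vert d\Vert^2$, which is exactly $c^{-1}I \preceq V$; in particular $V$ is positive definite, not merely semidefinite. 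For the upper bound, I would bound the two non-$\nabla^2 f$ terms from above: $c^{-1}\Vert d\Vert^2$ is already in the desired form, and for the last term, since $I - G \preceq I$ (again because the eigenvalues of $I-G$ are at most $1$), we get $c(Ed,(I-G)Ed) \le c\Vert Ed\Vert^2 \le c\Vert E\Vert^2 \Vert d\Vert^2$. Adding these gives $(d,Vd) \le (d,\nabla^2 f(\xi)d) + (c^{-1}+c\Vert E\Vert^2)\Vert d\Vert^2$, i.e. $V \preceq \nabla^2 f(\xi) + (c^{-1}+c\Vert E\Vert^2)I$.

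There is no real obstacle here; the lemma is a direct consequence of Lemma~\ref{lem:jacobian} together with convexity of $f$, and the only thing to be careful about is the elementary spectral fact that $0 \preceq G$ and $\Vert G\Vert \le 1$ together imply $0 \preceq I - G \preceq I$ — which follows because a symmetric matrix with spectral norm at most $1$ has all eigenvalues in $[-1,1]$, and combined with positive semidefiniteness they lie in $[0,1]$, so $I-G$ has eigenvalues in $[0,1]$. (The statement labels $V$ merely "positive semidefinite," but the lower bound $c^{-1}I \preceq V$ shows it is in fact positive definite; this stronger conclusion is what subsequently guarantees solvability of the Newton system $V_\ell d_\ell = -\nabla\psi(\xi_\ell)$ in Algorithm~\ref{alg:2}.)
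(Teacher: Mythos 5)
Your proof is correct and follows essentially the same route as the paper's: write $V=\nabla^2 f(\xi)+c^{-1}I+cE^\top(I-G)E$, use convexity of $f$ and the fact from Lemma~\ref{lem:jacobian} that $G$ is symmetric positive semidefinite with $\Vert G\Vert\le 1$ (hence $0\preceq I-G\preceq I$) to get the lower bound, and bound $(Ed,(I-G)Ed)\le\Vert E\Vert^2\Vert d\Vert^2$ for the upper bound. The paper's proof is just a terser version of the same argument, and your closing observation that the lower bound actually yields positive definiteness (hence invertibility of $V_\ell$) is consistent with how the lemma is used later.
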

\begin{proof}
We recall that $V\in T(\xi)$ is given as
\begin{equation*}
V= \nabla^2 f(\xi) + c^{-1}I + cE^{\top}(I-G)E,\quad G\in \partial(\prox{\phi/c})(E\xi + \lambda/c).
\end{equation*}
The first relation $c^{-1}I \preceq V$ is obvious.
For the second relation, by Lemma~\ref{lem:jacobian}, we have $(d,E^T(I-G)Ed) =  (Ed,(I-G)Ed)\le  (Ed,Ed)\le \Vert E\Vert^2\Vert d\Vert^2$ for $d\in\R^n$. The rest is obvious, and we complete the proof.
\end{proof}
\begin{proof}[Proof of Theorem~\ref{thm:converge_alg2} (i)]
We show that $\{\xi_\ell\}$ converges to the global minimizer $\xi^\ast$ of $\psi$.
Let $\xi_\ell$ be a nonstationary point of $\psi$, i.e., $\nabla\psi(\xi_\ell)\neq0$. Then, from Lemma~\ref{lem:V}, we have $d_\ell = -V_\ell^{-1}\nabla \psi_k(\xi_\ell)\neq 0$ and
$\nabla\psi(\xi_\ell)^{\top}d_\ell  = - (d_\ell, V_\ell d_\ell) = - \Vert V^{1/2}_{\ell}d_\ell\Vert^2<0$, which justifies that there exists $\tau_\ell=\rho^i$ that satisfies \eqref{arjimo}.
Since $\psi(\xi_{\ell+1})\le \psi(\xi_{\ell}) + \gamma \tau_{\ell}\nabla \psi(\xi_\ell)^{\top} d_\ell<\psi(\xi_{\ell})$ and $0\le \psi(\xi_\ell)$,
we see that every $\xi_\ell$ belongs to the set $\{\xi \in \R^n \mid
0\le \psi(\xi) \le\psi(\xi_0)\}$, which is a bounded subset of $\R^n$ because $\psi$ is coercive.
Namely, the generated sequence is $\{\xi_\ell\}_{\ell}$ bounded. Therefore, there exists an accumulation point $\bar{\xi}\in\R^n$ of $\{\xi_\ell\}_{\ell}$ and a subsequence $\{\xi_\ell\}_{\ell\in \chi}$ that converges to $\bar{\xi}$.
Let $\delta>0$ be an arbitrary number, and let $B_\ell = B(\xi_\ell, \delta)$ be a ball with center $\xi_\ell$ and radius $\delta$. Then, there exists a finite subset $J\subset \mathbb{N}$ such that $\cup_{\ell}B_\ell = \cup_{\ell\in J}B_\ell$.
Thus, $L = \sup_{\ell\in J}L_\ell$ is finite, and consequently, we have $\nabla^2 f(\xi_\ell)  \preceq  L I$ for all $\ell$.
By applying \cite[Prop.~8.3.7]{Facchinei+Pang-Finivariineqcomp:03} to the subsequence $\{\xi_\ell\}_{\ell\in\chi}$, we obtain that $\bar{\xi}$ is a stationary point, which is also the global minimizer of \eqref{min_inner}, i.e., $\bar{\xi}=\xi^\ast$. Since every accumulation point of $\{\xi_\ell\}_{\ell}$ coincides with the global minimizer $\xi^\ast$, the entire sequence converges to $\xi^\ast$.
\end{proof}
\subsubsection{The proof of Theorem~\ref{thm:converge_alg2} (ii)}
For a given convergent sequence $\{x_k\}$ with limit $x^\ast$, we say that a sequence $\{d_k\}$ is superlinearly convergent with respect to $\{x_k\}$ if the following limit holds:
\begin{equation*}
\lim_{k\to\infty}\frac{\Vert x_k+d_k-x^\ast\Vert}{\Vert x_k-x^\ast\Vert} = 0.
\end{equation*}

We state the result relevant to the step size in the Armijo rule. The following result is borrowed from
\cite[Prop.~8.3.13]{Facchinei+Pang-Finivariineqcomp:03}.
\begin{proposition}[{\cite[Prop.~8.3.18]{Facchinei+Pang-Finivariineqcomp:03}}]\label{prop:13}
Let $\theta : \R^n\rightarrow \R$ be a continuously differentiable function with $\nabla \theta$ semismooth near a zero $\eta^\ast$ of $\nabla \theta$.
Suppose that a sequence $\{\eta_\ell\}$ converges to $\eta^\ast$ with $\eta_\ell \neq \eta^\ast$ for all $\ell$.
Let $\{d_\ell\}$ be a superlinearly convergent sequence with respect to $\{\eta_\ell\}$.
If every matrix belonging to the generalized Jacobian $\partial(\nabla \theta)(\eta^\ast)$ is strictly positive definite, then
the following two statements hold: \\
(a) There exists a positive constant $\rho$ and $\ell_0$ such that
\begin{equation*}
\nabla\theta(\eta_\ell)^{\top} d_\ell\le -\rho \Vert d_\ell\Vert^2,\quad \mbox{for all}\quad \ell \ge \ell_0.
\end{equation*}
(b) For every $\gamma\in(0,1/2)$, there exists a positive constant $\ell^\prime_0$ such that
\begin{equation*}
  \theta(\eta_\ell + d_\ell)\le \theta(\eta_\ell)+\gamma \nabla\theta(\eta_\ell)^{\top} d_\ell,\quad \mbox{for all}\quad \ell \ge \ell^\prime_0.
\end{equation*}
\end{proposition}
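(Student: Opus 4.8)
The plan is to treat (a) and (b) in turn, drawing on three facts: the superlinear convergence of $\{d_\ell\}$ relative to $\{\eta_\ell\}$, the semismoothness of $\nabla\theta$ at $\eta^\ast$, and the strict positive definiteness of every matrix in $\partial(\nabla\theta)(\eta^\ast)$. First I would extract from superlinear convergence the elementary relations
\[
\eta_\ell-\eta^\ast=-d_\ell+o(\Vert d_\ell\Vert),\qquad \frac{\Vert d_\ell\Vert}{\Vert \eta_\ell-\eta^\ast\Vert}\to 1,
\]
obtained by writing $d_\ell=(\eta_\ell+d_\ell-\eta^\ast)-(\eta_\ell-\eta^\ast)$ and using $\Vert\eta_\ell+d_\ell-\eta^\ast\Vert=o(\Vert\eta_\ell-\eta^\ast\Vert)$. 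In particular $d_\ell\to 0$, and $\Vert d_\ell\Vert$ and $\Vert\eta_\ell-\eta^\ast\Vert$ may be used interchangeably inside $o(\cdot)$ terms.

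For (a), semismoothness of $\nabla\theta$ at $\eta^\ast$ together with $\nabla\theta(\eta^\ast)=0$ gives, for any $G_\ell\in\partial(\nabla\theta)(\eta_\ell)$,
\[
\nabla\theta(\eta_\ell)=G_\ell(\eta_\ell-\eta^\ast)+o(\Vert\eta_\ell-\eta^\ast\Vert)=-G_\ell d_\ell+o(\Vert d_\ell\Vert),
\]
hence $\nabla\theta(\eta_\ell)^\top d_\ell=-d_\ell^\top G_\ell d_\ell+o(\Vert d_\ell\Vert^2)$. The core of (a) is a uniform coercivity statement: there are $\mu>0$ and $\ell_0$ with $d^\top G d\ge\mu\Vert d\Vert^2$ for all $d$, all $G\in\partial(\nabla\theta)(\eta_\ell)$ and all $\ell\ge\ell_0$. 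I would prove this by contradiction, using local boundedness and upper semicontinuity (graph closedness) of the generalized Jacobian: a violating sequence of unit vectors $\hat d_j$ and matrices $G_j$ at points $\eta_{\ell_j}\to\eta^\ast$ would, after passing to a subsequence, converge to a unit vector $\hat d$ and a matrix $G^\ast\in\partial(\nabla\theta)(\eta^\ast)$ with $\hat d^\top G^\ast\hat d\le 0$, contradicting the hypothesis. Combining this with the previous display yields $\nabla\theta(\eta_\ell)^\top d_\ell\le-\tfrac12\mu\Vert d_\ell\Vert^2$ for $\ell$ large, which is (a) with $\rho=\mu/2$.

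For (b), the decisive step is the sharp estimate
\[
\theta(\eta_\ell+d_\ell)-\theta(\eta_\ell)=\tfrac12\,\nabla\theta(\eta_\ell)^\top d_\ell+o(\Vert d_\ell\Vert^2),
\]
after which the claim is immediate: subtracting $\gamma\,\nabla\theta(\eta_\ell)^\top d_\ell$ leaves $(\tfrac12-\gamma)\nabla\theta(\eta_\ell)^\top d_\ell+o(\Vert d_\ell\Vert^2)$, and since $\gamma<\tfrac12$ while part (a) gives $\nabla\theta(\eta_\ell)^\top d_\ell\le-\rho\Vert d_\ell\Vert^2$, this is bounded by $-(\tfrac12-\gamma)\rho\Vert d_\ell\Vert^2+o(\Vert d_\ell\Vert^2)<0$ for all large $\ell$, giving $\ell_0^\prime$. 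To prove the estimate I would start from $\theta(\eta_\ell+d_\ell)-\theta(\eta_\ell)=\int_0^1\nabla\theta(\eta_\ell+t d_\ell)^\top d_\ell\,dt$ and substitute the first-order semismooth expansion at $\eta^\ast$, $\nabla\theta(\eta^\ast+h)=(\nabla\theta)'(\eta^\ast;h)+o(\Vert h\Vert)$, where $(\nabla\theta)'(\eta^\ast;\cdot)$ is the positively homogeneous, Lipschitz directional derivative guaranteed by semismoothness. Since $\eta_\ell+t d_\ell-\eta^\ast=(t-1)d_\ell+o(\Vert d_\ell\Vert)$ uniformly in $t\in[0,1]$, Lipschitz continuity and positive homogeneity give $(\nabla\theta)'(\eta^\ast;\eta_\ell+t d_\ell-\eta^\ast)=(1-t)\,(\nabla\theta)'(\eta^\ast;-d_\ell)+o(\Vert d_\ell\Vert)$, and $\int_0^1(1-t)\,dt=\tfrac12$ produces the factor $\tfrac12$; finally $\nabla\theta(\eta_\ell)=(\nabla\theta)'(\eta^\ast;-d_\ell)+o(\Vert d_\ell\Vert)$ converts $\tfrac12(\nabla\theta)'(\eta^\ast;-d_\ell)^\top d_\ell$ into $\tfrac12\nabla\theta(\eta_\ell)^\top d_\ell+o(\Vert d_\ell\Vert^2)$.

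I expect the estimate in (b) to be the main obstacle. The naive route of expanding the integral through $G(t)\in\partial(\nabla\theta)(\eta_\ell+t d_\ell)$ fails, because the generalized Jacobian genuinely varies along the segment $[\eta_\ell,\eta_\ell+d_\ell]$, which typically crosses the region where $\nabla\theta$ is nonsmooth; bounding the resulting quadratic forms only by the crude constants $\mu$ and an upper constant $M$ does not reproduce the decisive coefficient $\tfrac12$ and hence cannot cover every $\gamma<\tfrac12$. The way around this is to pass to the fixed first-order approximation $(\nabla\theta)'(\eta^\ast;\cdot)$: being a single positively homogeneous map, its entire $t$-dependence collapses to the scalar weight $(1-t)$, which integrates exactly to $\tfrac12$. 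The remaining care is to verify that the semismooth remainder is $o(\Vert d_\ell\Vert)$ uniformly in $t\in[0,1]$ (legitimate because $\Vert\eta_\ell+t d_\ell-\eta^\ast\Vert\le(1+o(1))\Vert d_\ell\Vert$ uniformly) and to note $\Vert\nabla\theta(\eta_\ell+d_\ell)\Vert=o(\Vert d_\ell\Vert)$, a consequence of superlinear convergence and the Lipschitz continuity of $\nabla\theta$.
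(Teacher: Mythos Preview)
The paper does not supply its own proof of this proposition; it is quoted verbatim from Facchinei and Pang \cite[Prop.~8.3.18]{Facchinei+Pang-Finivariineqcomp:03} and used as a black box in the proof of Theorem~\ref{thm:converge_alg2}(ii). So there is nothing in the paper to compare your argument against.

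That said, your proposal is a correct and self-contained proof. The reduction of (a) to uniform coercivity of $\partial(\nabla\theta)$ near $\eta^\ast$ via upper semicontinuity and compactness is standard and sound. For (b), your choice to expand the integrand through the fixed positively homogeneous B-derivative $(\nabla\theta)'(\eta^\ast;\cdot)$ rather than through varying elements of $\partial(\nabla\theta)$ along the segment is exactly the right move, and you correctly identify why: only the former collapses the $t$-dependence to the scalar $(1-t)$, whose integral produces the coefficient $\tfrac12$ needed to accommodate every $\gamma\in(0,\tfrac12)$. The uniformity-in-$t$ of the $o(\Vert d_\ell\Vert)$ remainder is justified as you say, since $\Vert\eta_\ell+td_\ell-\eta^\ast\Vert\le(1+o(1))\Vert d_\ell\Vert$ and the B-differentiability remainder $\nabla\theta(\eta^\ast+h)-(\nabla\theta)'(\eta^\ast;h)=o(\Vert h\Vert)$ depends only on $\Vert h\Vert$. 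This is essentially the argument one finds in Facchinei--Pang, so had the paper reproduced that proof, your approach would match it.
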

We employ Proposition~\ref{prop:13} to prove that $\tau_\ell =1$ is accepted for all $\ell$ sufficiently large by the Armijo rule in Algorithm~\ref{alg:2}. Obviously, $\nabla \psi$ is semismooth. Since $\partial(\nabla \psi)(\xi^\ast)\subset T(\xi^\ast)$ by definition and every $V\in T(\xi^\ast)$ is nonsingular by Lemma~\ref{lem:V}, we know that the generalized Jacobian $\partial(\nabla \psi)(\xi^\ast)$ is strictly positive definite. It remains to show that $\{d_\ell\}$ is a superlinear convergent sequence with respect to $\{\xi_\ell\}$ generated by Algorithm~\ref{alg:2}, namely, we must show that
\begin{equation*}
\lim_{\ell\to\infty}\frac{\Vert \xi_\ell +d_\ell -\xi^\ast\Vert}{\Vert \xi_\ell - \xi^\ast\Vert}=0.
\end{equation*}
To this end, we exploit the theory of the linear Newton approximation scheme.
\begin{definition}\label{def:linear Newton}
	Let $\Phi\colon \R^n \rightarrow \R^n$ be locally Lipschitz continuous.
	If there exists a set-valued map $S\colon \R^n \rightrightarrows \R^{n\times n}$ such that:
	\begin{enumerate}
		\item[\textrm{(a)}] The set of matrices $S(\eta)$ is nonempty and compact for each $\eta\in\R^n$;
		\item[\textrm{(b)}] $S$ is upper semicontinuous at $\bar{\eta}$;
		\item[\textrm{(c)}] The following limit holds:
		\begin{equation*}
		\lim_{\substack{\bar{\eta}\neq \eta \rightarrow \bar{\eta}\\ H\in S(\eta) }}
		\frac{ \Vert \Phi(\eta) + H(\bar{\eta}-\eta) - \Phi(\bar{\eta})\Vert}{\Vert \eta -\bar{\eta}\Vert} = 0;
		\end{equation*}
	\end{enumerate}
	we say the map $\Phi$ admits a \textit{linear Newton approximation (LNA)} of $\Phi$ at $\bar{\eta}\in\R^n$. We also say that
	$S$ is a linear Newton approximation scheme of $\Phi$ at $\bar{\eta}\in\R^n$.
	If \textrm{(c)} is strengthened to
	\begin{enumerate}
		\item[($\textrm{c}^{\prime}$)]
		\begin{equation*}
		\limsup_{\substack{\bar{\eta}\neq \eta \rightarrow \bar{\eta}\\ H\in S(\eta) }}
		\frac{ \Vert \Phi(\eta) + H(\bar{\eta}-\eta) - \Phi(\bar{\eta})\Vert}{\Vert \eta -\bar{\eta}\Vert^2} <\infty,
		\end{equation*}
	\end{enumerate}
	we say that $S$ is a strongly linear Newton approximation scheme at $\bar{\eta}$.
\end{definition}
\begin{lemma}\label{B_LNA}
Assume that a locally Lipschitz map $\Phi \colon \R^m \rightarrow \R^m$ is (strongly) semismooth at $\eta\in \R^m$; then,
each of $\partial \Phi$ and $\partial_B \Phi$ defines a (strong) LNA scheme of $\Phi$ at $\eta$.
\end{lemma}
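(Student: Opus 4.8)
The plan is to verify the three defining conditions (a), (b), (c) of a linear Newton approximation scheme (Definition~\ref{def:linear Newton}) for the two candidate schemes $S=\partial\Phi$ and $S=\partial_B\Phi$, with the key observation that conditions (a) and (b) hold for \emph{any} locally Lipschitz map and do not invoke semismoothness, whereas condition (c) for $S=\partial\Phi$ will turn out to be literally the definition of semismoothness of $\Phi$ at $\eta$. So the real content is essentially bookkeeping, organized so that the semismooth and strongly semismooth cases are handled in parallel.

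First I would dispose of (a) and (b). For (a): Rademacher's theorem gives $\partial_B\Phi(\eta)\neq\emptyset$; it is closed by its definition as a set of accumulation points of Jacobians and bounded by any local Lipschitz constant of $\Phi$ near $\eta$, hence compact, and then $\partial\Phi(\eta)=\mbox{conv}(\partial_B\Phi(\eta))$ is nonempty and compact as the convex hull of a compact set in finite dimensions. For (b): I would show $\partial_B\Phi$ has closed graph (a diagonal argument over the differentiability sequences defining membership) and is locally bounded, and that these two facts together yield upper semicontinuity at $\bar\eta$; upper semicontinuity of $\partial\Phi$ is then the classical property of Clarke's generalized Jacobian, which I would simply cite from \cite[Ch.~7]{Facchinei+Pang-Finivariineqcomp:03}.

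Then I would turn to (c). If $\Phi$ is semismooth at $\eta$, then by definition
\begin{equation*}
 \lim_{\substack{\eta \neq x\to  \eta \\  G\in \partial \Phi(x)}} \frac{\Vert \Phi(x)+G(\eta - x) - \Phi(\eta)\Vert}{\Vert x  - \eta\Vert} = 0,
\end{equation*}
and this is exactly condition (c) with $\bar\eta=\eta$ for the scheme $S=\partial\Phi$. For $S=\partial_B\Phi$ I would use the inclusion $\partial_B\Phi(x)\subseteq\partial\Phi(x)$: every difference quotient built from an $H\in\partial_B\Phi(x)$ is among those over which the $\partial\Phi$-limit is taken, so it tends to zero as well. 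For the strong statement, the same two steps apply verbatim after replacing $\Vert x-\eta\Vert$ by $\Vert x-\eta\Vert^2$ and the limit by a $\limsup$: strong semismoothness is condition (c$^\prime$) for $\partial\Phi$, and the inclusion transfers it to $\partial_B\Phi$.

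The main obstacle, modest as it is, lies entirely in (b): the upper semicontinuity of $\partial_B\Phi$ is not immediate and needs the closed-graph/diagonal-extraction argument (or an appeal to the corresponding result in the cited reference), whereas everything else is either a citation of standard nonsmooth-analysis facts or a direct unwinding of the definition of (strong) semismoothness.
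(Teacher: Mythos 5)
Your proposal is correct and follows essentially the same route as the paper, which simply cites \cite[Prop.~7.1.4, Thm.~7.4.3, Prop.~7.5.16]{Facchinei+Pang-Finivariineqcomp:03} for conditions (a), (b), (c) and for the $\partial_B\Phi$ case. You merely spell out what those citations encode: (a) and (b) are standard compactness/upper-semicontinuity properties of the Clarke generalized Jacobian, condition (c) is literally the semismoothness limit as defined in this paper, and the inclusion $\partial_B\Phi(x)\subseteq\partial\Phi(x)$ transfers it to the limiting Jacobian.
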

\begin{proof}
It follows from \cite[Prop.~7.1.4]{Facchinei+Pang-Finivariineqcomp:03} that the set-valued map $\partial \Phi $ satisfies conditions {\rm (a)} and {\rm (b)} of Definition~\ref{def:linear Newton}, while from \cite[Thm.~7.4.3]{Facchinei+Pang-Finivariineqcomp:03}, the map satisfies condition {\rm (c)}. We refer the proof for the limiting Jacobian to \cite[Prop.~7.5.16]{Facchinei+Pang-Finivariineqcomp:03}.
\end{proof}
\begin{lemma}\label{prop:sum}
Let $\Phi_1 \colon \R^n\rightarrow \R^m$ and $\Phi_2 \colon \R^n \rightarrow \R^m$ be locally Lipschitz maps,
with $T_1$ and $T_2$ being LNAs of $\Phi_1$ and $\Phi_2$ at $\xi$, respectively. For linear maps, $A_1, A_2 \colon \R^{m}\rightarrow \R^\ell$, the map $A_1\Phi_1+A_2\Phi_2$ is locally Lipschitz continuous, and $A_1 T_1 + A_2 T_2$ is a LNA of the map.
\end{lemma}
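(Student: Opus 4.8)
The plan is to verify the three defining conditions of a linear Newton approximation (Definition~\ref{def:linear Newton}) for the set-valued map $A_1 T_1 + A_2 T_2$ at $\xi$, where this map is understood as $\eta \mapsto \{A_1 H_1 + A_2 H_2 \mid H_1 \in T_1(\eta),\ H_2 \in T_2(\eta)\}$. Local Lipschitz continuity of $A_1\Phi_1 + A_2\Phi_2$ is immediate since each $\Phi_i$ is locally Lipschitz and $A_i$ is linear (hence globally Lipschitz), and compositions/sums of locally Lipschitz maps are locally Lipschitz.

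First I would check condition (a): nonemptiness and compactness of $(A_1 T_1 + A_2 T_2)(\eta)$. Since $T_1(\eta)$ and $T_2(\eta)$ are each nonempty and compact (as LNA schemes), their images $A_1 T_1(\eta)$ and $A_2 T_2(\eta)$ under the continuous linear maps $M \mapsto A_i M$ are nonempty and compact, and the sum of two compact sets in a finite-dimensional space is compact; nonemptiness is clear. Next, condition (b), upper semicontinuity at $\xi$: this follows because $M \mapsto A_i M$ is continuous, the image of an upper semicontinuous compact-valued map under a continuous map is upper semicontinuous, and the (Minkowski) sum of two upper semicontinuous compact-valued maps is upper semicontinuous at $\xi$. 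These two points are routine topological bookkeeping.

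The substantive step is condition (c), the approximation estimate. Given $\eta \to \xi$ (with $\eta \neq \xi$) and $H = A_1 H_1 + A_2 H_2$ with $H_i \in T_i(\eta)$, I would write
\begin{align*}
&\big\| (A_1\Phi_1 + A_2\Phi_2)(\eta) + H(\xi - \eta) - (A_1\Phi_1 + A_2\Phi_2)(\xi) \big\| \\
&\quad \le \|A_1\|\, \big\| \Phi_1(\eta) + H_1(\xi-\eta) - \Phi_1(\xi)\big\| + \|A_2\|\, \big\| \Phi_2(\eta) + H_2(\xi-\eta) - \Phi_2(\xi)\big\|.
\end{align*}
Dividing by $\|\eta - \xi\|$ and taking the limit, each term on the right tends to $0$ because $T_i$ is an LNA of $\Phi_i$ at $\xi$ (the supremum over $H_i \in T_i(\eta)$ of the corresponding ratio goes to $0$ by condition (c) for $T_i$). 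Hence the left-hand ratio tends to $0$, which is condition (c) for $A_1 T_1 + A_2 T_2$. The strong (quadratic) version follows identically by replacing $\|\eta-\xi\|$ with $\|\eta-\xi\|^2$ and invoking ($\textrm{c}'$) for each $T_i$; I would note this parenthetically though the lemma as stated only asserts the LNA (not strong LNA) conclusion.

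I do not anticipate a genuine obstacle here — the proof is a direct triangle-inequality argument combined with standard facts about images and sums of compact- and upper-semicontinuous-valued maps. The only point requiring mild care is keeping the set-valued arithmetic honest: the estimate in (c) must hold uniformly over all selections $H_1 \in T_1(\eta)$ and $H_2 \in T_2(\eta)$ simultaneously, but this is exactly what the ``$\sup$ over $H \in S(\eta)$'' formulation of condition (c) for each $T_i$ delivers, so the triangle inequality bound is uniform as needed.
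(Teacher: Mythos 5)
Your proof is correct: the triangle-inequality verification of condition (c), together with the routine compactness and upper-semicontinuity bookkeeping for the Minkowski sum $A_1T_1(\eta)+A_2T_2(\eta)$, is exactly the standard argument. The paper itself offers no argument here beyond citing \cite[Cor.~7.5.18]{Facchinei+Pang-Finivariineqcomp:03}, and your write-up is precisely the (correct) content of that corollary, so there is nothing to reconcile between the two.
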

\begin{proof}
The assertion directly follows from \cite[Cor.~7.5.18]{Facchinei+Pang-Finivariineqcomp:03}.
\end{proof}
\begin{lemma}[{\cite[Thm~7.5.17]{Facchinei+Pang-Finivariineqcomp:03}}]\label{prop:chain}
Let $\Phi \colon \R^{n}\rightarrow \R^m$ be a locally Lipschitz continuous map defined by $\Phi(\xi)=\Phi_1(\Phi_2(\xi))$, where $\Phi_1 \colon \R^m\rightarrow \R^m$ and $\Phi_2 \colon \R^n\rightarrow \R^m$ are both locally Lipschitz continuous. Suppose that $T_1$ and $T_2$ are (strong) linear Newton approximation schemes of $S_1$ and $S_2$ at $\Phi_2(\xi)$ and $\xi$, respectively. Then,
\begin{equation*}
  S(\xi)=\{G_1G_2\colon G_1\in T_1(\Phi_2(\xi)),\; G_2\in S_2(\xi)\},
\end{equation*}
is a (strong) LNA of $\Phi$ at $\xi\in \R^n$.
\end{lemma}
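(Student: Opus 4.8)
The plan is to verify the three defining properties (a)--(c) of Definition~\ref{def:linear Newton} for the candidate map $S$ directly, using that $T_1$ and $T_2$ already satisfy them; here I write $T_2$ for the LNA scheme of $\Phi_2$ at $\bar\xi$ (the $S_2$ of the statement), and abbreviate $\bar z:=\Phi_2(\bar\xi)$ and $z:=\Phi_2(\xi)$. For (a), the matrix-product map $(G_1,G_2)\mapsto G_1G_2$ is continuous, so $S(\eta)$ is the continuous image of the compact set $T_1(\Phi_2(\eta))\times T_2(\eta)$ and is therefore nonempty and compact for every $\eta$. For (b), note first that $\eta\mapsto T_1(\Phi_2(\eta))$ is upper semicontinuous at $\bar\xi$, being the composition of the continuous map $\Phi_2$ with the map $T_1$, which is u.s.c.\ at $\bar z$; then $S$ is u.s.c.\ at $\bar\xi$ because taking matrix products of two compact-valued u.s.c.\ maps again yields an u.s.c.\ (compact-valued) map. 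A by-product of u.s.c.\ plus compactness is that $\{G_1 : G_1\in T_1(z),\ \|z-\bar z\|\le\delta\}$ is a bounded set for some $\delta>0$; fix an upper bound $M_1$ for the norms of these $G_1$, and let $L_2$ be a local Lipschitz constant of $\Phi_2$ near $\bar\xi$. These two constants are all the estimate below needs.

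For property (c), take $\xi\neq\bar\xi$ close to $\bar\xi$ and $H=G_1G_2\in S(\xi)$ with $G_1\in T_1(z)$, $G_2\in T_2(\xi)$. The key step is the splitting obtained by inserting $G_1(\bar z-z)$:
\begin{align*}
\Phi(\xi)+H(\bar\xi-\xi)-\Phi(\bar\xi)
&= \bigl[\Phi_1(z)+G_1(\bar z-z)-\Phi_1(\bar z)\bigr] \\
&\quad + G_1\bigl[G_2(\bar\xi-\xi)-(\bar z-z)\bigr].
\end{align*}
The first bracket is controlled by the LNA property of $T_1$ at $\bar z$: for every $\varepsilon>0$, once $\|z-\bar z\|$ is small enough it is at most $\varepsilon\|z-\bar z\|\le\varepsilon L_2\|\xi-\bar\xi\|$, using $\|z-\bar z\|\le L_2\|\xi-\bar\xi\|$ (and it vanishes identically in the degenerate case $z=\bar z$, which the LNA limit nominally excludes, so no difficulty arises there). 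The second bracket equals, up to sign, $-\bigl[\Phi_2(\xi)+G_2(\bar\xi-\xi)-\Phi_2(\bar\xi)\bigr]$, so the LNA property of $T_2$ at $\bar\xi$ gives $\|G_2(\bar\xi-\xi)-(\bar z-z)\|=o(\|\xi-\bar\xi\|)$, and multiplying by $\|G_1\|\le M_1$ keeps it $o(\|\xi-\bar\xi\|)$. Summing the two pieces yields $\|\Phi(\xi)+H(\bar\xi-\xi)-\Phi(\bar\xi)\|=o(\|\xi-\bar\xi\|)$, which is exactly (c). For the strong case the same decomposition works verbatim with $o(\cdot)$ replaced by $O(\|\xi-\bar\xi\|^2)$: the first bracket is $O(\|z-\bar z\|^2)=O(\|\xi-\bar\xi\|^2)$ by the strong LNA of $T_1$ together with the Lipschitz bound, and the second is $M_1\cdot O(\|\xi-\bar\xi\|^2)$ by the strong LNA of $T_2$.

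The main obstacle I anticipate is not the algebra but the soft-analysis bookkeeping around property (b): one must check that composing an u.s.c.\ compact-valued map with a continuous map, and then forming matrix products of two such maps, again produces an u.s.c.\ compact-valued map, and --- more importantly for (c) --- that this delivers the uniform bound $M_1$ on $\|G_1\|$ as $z$ ranges over a neighborhood of $\bar z$. Without that uniform bound the factor $G_1$ multiplying the $\Phi_2$-error term cannot be absorbed. Once these standard facts about set-valued maps are in place (they are collected in \cite[Ch.~7]{Facchinei+Pang-Finivariineqcomp:03}), the remainder is the routine estimate above, and indeed the statement coincides with \cite[Thm.~7.5.17]{Facchinei+Pang-Finivariineqcomp:03}.
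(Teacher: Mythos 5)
Your proof is correct. Note that the paper itself offers no proof of this lemma --- it is quoted verbatim from \cite[Thm.~7.5.17]{Facchinei+Pang-Finivariineqcomp:03} --- so there is nothing to compare against except the standard argument in that reference, which is exactly the one you reconstruct: verify (a) by continuity of the matrix product on a compact product set, (b) by stability of upper semicontinuity under composition with a continuous map and under products, and (c) by the telescoping identity obtained by inserting $G_1(\bar z - z)$. You correctly identify and handle the two points where a careless version of this argument would leak: the need for a uniform bound $M_1$ on $\Vert G_1\Vert$ for $z$ near $\bar z$ (which does follow from upper semicontinuity plus compactness of $T_1(\bar z)$, i.e.\ local boundedness of an u.s.c.\ compact-valued map), and the degenerate case $\Phi_2(\xi)=\Phi_2(\bar\xi)$ with $\xi\neq\bar\xi$, which the LNA limit for $T_1$ formally excludes but in which the first bracket vanishes identically. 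One trivial slip: the second bracket $G_2(\bar\xi-\xi)-(\bar z-z)$ equals $+\bigl[\Phi_2(\xi)+G_2(\bar\xi-\xi)-\Phi_2(\bar\xi)\bigr]$, not its negative; this is immaterial since only the norm is used. The strong case goes through verbatim as you say, using $\Vert z-\bar z\Vert\le L_2\Vert\xi-\bar\xi\Vert$ to convert $O(\Vert z-\bar z\Vert^2)$ into $O(\Vert\xi-\bar\xi\Vert^2)$.
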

We now show that $T(\xi)$ is a (strong) LNA of $\nabla \psi(\xi)$.
\begin{proposition}\label{prop:LNA}
Let $\xi\in\R^n$ and assume that the proximal mapping $\prox{\phi/c}$ is (strongly) semismooth at $z=E\xi+\lambda/c$.
Then, $T(\xi)$ defined by \eqref{def:T} is a (strong) linear Newton approximation scheme of $\nabla\psi$ at $\xi$.
\end{proposition}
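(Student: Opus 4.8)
The plan is to obtain $T(\xi)$ by applying the calculus of linear Newton approximations (Lemmas~\ref{B_LNA}, \ref{prop:sum}, \ref{prop:chain}) to a natural decomposition of $\nabla\psi$. Writing out the gradient,
\begin{equation*}
\nabla\psi(\xi)=\nabla f(\xi)+\underbrace{c^{-1}(\xi-x)+cE^\top E\xi+E^\top\lambda}_{=:b(\xi)}-cE^\top\prox{\phi/c}(E\xi+\lambda/c),
\end{equation*}
so $\nabla\psi$ is the sum of a $C^1$ term, an affine term $b$, and a linear image of the composition of $\prox{\phi/c}$ with an affine map.

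First I would dispose of the easy pieces. Since $f$ is twice continuously differentiable, $\nabla f$ is $C^1$, hence semismooth, and by Lemma~\ref{B_LNA} the singleton map $\xi\mapsto\{\nabla^2 f(\xi)\}=\partial(\nabla f)(\xi)$ is an LNA scheme of $\nabla f$; it is a strong LNA scheme when, in addition, the Hessian $\nabla^2 f$ is locally Lipschitz, which is the extra regularity tacitly required for the ``strong'' half of the statement. The affine map $b$ is $C^\infty$ and $\xi\mapsto\{c^{-1}I+cE^\top E\}$ is a strong LNA scheme of it (directly, since the remainder in Definition~\ref{def:linear Newton}(c$'$) vanishes identically for an affine map).

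Next comes the only substantive step, the composite term $C(\xi):=-cE^\top\prox{\phi/c}(E\xi+\lambda/c)$. I would write $C(\xi)=A\bigl(\Phi_1(\Phi_2(\xi))\bigr)$ with the linear map $A=-cE^\top$, the affine inner map $\Phi_2(\xi)=E\xi+\lambda/c$ (whose constant Jacobian $\{E\}$ is trivially a strong LNA scheme), and the outer map $\Phi_1=\prox{\phi/c}\colon\R^m\to\R^m$, which is nonexpansive, hence globally Lipschitz, and (strongly) semismooth at $z=E\xi+\lambda/c$ by hypothesis. By Lemma~\ref{B_LNA}, $\partial(\prox{\phi/c})$ is a (strong) LNA scheme of $\prox{\phi/c}$ at $z$; the chain rule Lemma~\ref{prop:chain} then gives $\{GE:G\in\partial(\prox{\phi/c})(E\xi+\lambda/c)\}$ as a (strong) LNA scheme of $\xi\mapsto\prox{\phi/c}(E\xi+\lambda/c)$, and premultiplication by the linear map $A$ (the linear-map case of Lemma~\ref{prop:sum}, taking the second summand to be zero) yields $\{-cE^\top GE:G\in\partial(\prox{\phi/c})(E\xi+\lambda/c)\}$ as a (strong) LNA scheme of $C$.

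Finally I would recombine the three pieces using Lemma~\ref{prop:sum} twice (with both linear maps equal to the identity), which produces the (strong) LNA scheme $\{\nabla^2 f(\xi)+c^{-1}I+cE^\top E-cE^\top GE:G\in\partial(\prox{\phi/c})(E\xi+\lambda/c)\}$, that is $\{\nabla^2 f(\xi)+c^{-1}I+cE^\top(I-G)E:G\in\partial(\prox{\phi/c})(E\xi+\lambda/c)\}=T(\xi)$, proving the proposition. I do not expect a serious obstacle here beyond careful bookkeeping; the one point that needs attention is making the domains and codomains line up when invoking the chain and sum rules (in particular, checking that the affine inner map qualifies as its own strong LNA scheme and that the outer map is genuinely locally Lipschitz), together with the caveat noted above that the ``strong'' conclusion additionally requires $\nabla f$ to be strongly semismooth, i.e.\ a locally Lipschitz Hessian.
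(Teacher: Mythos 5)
Your proof is correct and follows essentially the same route as the paper: split $\nabla\psi$ into the smooth/affine part, whose singleton scheme $\{\nabla^2 f(\xi)+c^{-1}I+cE^\top E\}$ is verified directly, and the composite term $\xi\mapsto\prox{\phi/c}(E\xi+\lambda/c)$, handled via Lemma~\ref{B_LNA}, the chain rule of Lemma~\ref{prop:chain}, and the sum rule of Lemma~\ref{prop:sum}. Your caveat that the ``strong'' conclusion needs $\nabla^2 f$ locally Lipschitz is a fair and worthwhile observation: the paper tacitly assumes the same extra regularity when it asserts the $O(\Vert\xi-\eta\Vert^2)$ remainder bound for $\nabla f$.
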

\begin{proof}
From Lemma~\ref{B_LNA}, Lemma~\ref{prop:sum} and Lemma~\ref{prop:chain}, we know that the map $\xi\rightarrow \prox{\phi/c}(E\xi+\lambda/c)$ admits  a (strong) LNA scheme $\{GE \mid G\in\partial(\prox{\phi/c})(E\xi+\lambda/c)\}$.

Let us write $\Phi(\xi)=\nabla f(\xi)+c^{-1}(\xi-x)+cE^{\top}E\xi$. It suffices to show that $\nabla\Phi(\xi)=\nabla^2f(\xi)+ c^{-1}I+cE^{\top}E$ is a strong LNA of $\Phi(\xi)$. Since $\nabla f(\xi)$ is locally Lipschitz continuous, we have (cf. \cite[Prop.~7.2.9]{Facchinei+Pang-Finivariineqcomp:03})
\begin{align*}
\Vert \Phi(\xi)-\Phi(\eta)-\nabla\Phi(\xi)(\xi-\eta)\Vert =\Vert \nabla f(\xi)- \nabla f(\eta)-\nabla^2f(\xi)(\xi-\eta)\Vert=O(\Vert \xi- \eta\Vert^2),
\end{align*}
which completes the proof.
\end{proof}
The following estimate is a key to establishing that the direction $\{d_\ell\}$ is superlinear convergent with respect to $\{\xi_\ell\}$.
\begin{lemma}\label{prop:d}
Let $\{\xi_\ell\}$, $\{d_\ell\}$ and $\{V_\ell\}$ be sequences generated by Algorithm~\ref{alg:2}.
We have the following estimate:
\begin{equation}\label{est}
  \Vert \xi_\ell + d_\ell - \xi^\ast \Vert \le c \Vert \nabla \psi(\xi_\ell)-\nabla \psi(\xi^\ast) - V_\ell(\xi_\ell - \xi^\ast)\Vert, \quad \mbox{for all } \ell.
\end{equation}
\end{lemma}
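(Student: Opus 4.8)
The plan is to obtain the estimate by a direct algebraic manipulation of the Newton step, exploiting the lower bound on $V_\ell$ furnished by Lemma~\ref{lem:V}. First I would recall that, by construction in Step~3 of Algorithm~\ref{alg:2}, the search direction satisfies $V_\ell d_\ell = -\nabla\psi(\xi_\ell)$ with $V_\ell\in T(\xi_\ell)$, and that by Lemma~\ref{lem:V} every such $V_\ell$ is symmetric with $c^{-1}I\preceq V_\ell$, hence nonsingular; in particular $d_\ell = -V_\ell^{-1}\nabla\psi(\xi_\ell)$ is well defined.

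Next I would use the optimality condition $\nabla\psi(\xi^\ast)=0$ from \eqref{equ:opt} to write $\nabla\psi(\xi_\ell) = \nabla\psi(\xi_\ell)-\nabla\psi(\xi^\ast)$, and then expand
\begin{equation*}
\xi_\ell + d_\ell - \xi^\ast = \xi_\ell - \xi^\ast - V_\ell^{-1}\bigl(\nabla\psi(\xi_\ell)-\nabla\psi(\xi^\ast)\bigr) = V_\ell^{-1}\bigl(V_\ell(\xi_\ell-\xi^\ast) - \nabla\psi(\xi_\ell) + \nabla\psi(\xi^\ast)\bigr).
\end{equation*}
Taking norms gives $\Vert \xi_\ell + d_\ell - \xi^\ast\Vert \le \Vert V_\ell^{-1}\Vert\,\Vert \nabla\psi(\xi_\ell)-\nabla\psi(\xi^\ast) - V_\ell(\xi_\ell-\xi^\ast)\Vert$.

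Finally I would bound $\Vert V_\ell^{-1}\Vert$: since $V_\ell$ is symmetric with $c^{-1}I \preceq V_\ell$, all its eigenvalues are at least $c^{-1}$, so the eigenvalues of $V_\ell^{-1}$ are at most $c$, whence $\Vert V_\ell^{-1}\Vert \le c$. Substituting this into the previous inequality yields \eqref{est}. There is no real obstacle here; the only point requiring care is to invoke Lemma~\ref{lem:V} correctly so that $V_\ell$ is guaranteed invertible with the spectral bound $\Vert V_\ell^{-1}\Vert\le c$, and to note that $\xi^\ast$ is exactly where $\nabla\psi$ vanishes so that the term $\nabla\psi(\xi^\ast)$ may be inserted for free.
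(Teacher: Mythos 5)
Your proof is correct and follows essentially the same route as the paper: both start from the Newton equation $V_\ell d_\ell=-\nabla\psi(\xi_\ell)$ and $\nabla\psi(\xi^\ast)=0$ to arrive at the identity $V_\ell(\xi_\ell+d_\ell-\xi^\ast)=-\bigl(\nabla\psi(\xi_\ell)-\nabla\psi(\xi^\ast)-V_\ell(\xi_\ell-\xi^\ast)\bigr)$ and then invoke the bound $c^{-1}I\preceq V_\ell$ from Lemma~\ref{lem:V}. The only cosmetic difference is that you extract the factor $c$ via $\Vert V_\ell^{-1}\Vert\le c$, whereas the paper takes the inner product with $\xi_\ell+d_\ell-\xi^\ast$ and applies the Cauchy--Schwarz inequality; for a symmetric matrix with $c^{-1}I\preceq V_\ell$ these are the same estimate.
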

\begin{proof}
The estimate follows from the argument in the proof of \cite[Thm.~8.3.15]{Facchinei+Pang-Finivariineqcomp:03}. We provide the derivation of the estimate for completeness.
Adding $\nabla \psi(\xi_\ell)+V_\ell d_\ell = 0$ and $\nabla \psi(\xi^\ast)=0$ to $d_\ell = V_{\ell}^{-1}\nabla \psi(\xi_\ell)$ yields
 $V_\ell(\xi_\ell+d_\ell -\xi^\ast) + \nabla \psi(\xi_\ell)-\nabla \psi(\xi^\ast) -V_\ell(\xi_\ell-\xi^\ast) =0$.
By multiplying $x_\ell+d_\ell -\xi^\ast$ and then by using $c^{-1}I \preceq V_\ell$ and the Cauchy-Schwarz inequality, we have the desired estimate \eqref{est}.
\end{proof}
\begin{proof}[Proof of Theorem~\ref{thm:converge_alg2} (ii)]
From Lemma~\ref{prop:d}, we have
\begin{equation*}
\frac{\Vert \xi_\ell +d_\ell -\xi^\ast\Vert}{\Vert \xi_\ell - \xi^\ast\Vert}\le \frac{c\Vert \nabla \psi(\xi_\ell)-\nabla \psi(\xi^\ast) - V_\ell(\xi_\ell - \xi^\ast)\Vert}{\Vert \xi_\ell - \xi^\ast\Vert}.
\end{equation*}
By Proposition~\ref{prop:LNA}, if the proximal mapping is semismooth at $\xi^\ast$, the left-hand side tends to 0 as $\ell\to\infty$ because $\lim_{\ell\to\infty}\xi_\ell = \xi^\ast$. Consequently,
$
\lim_{\ell\to\infty}\frac{\Vert \xi_\ell +d_\ell -\xi^\ast\Vert}{\Vert \xi_\ell - \xi^\ast\Vert}=0.
$ Thus, by Proposition~\ref{prop:13}, the Armijo step size rule eventually selects $\tau_\ell=1$ for all $\ell$ sufficiently large.
\end{proof}
\subsubsection{The proof of Theorem~\ref{thm:converge_alg2} (iii)}
We begin with the result of the local convergence of the linear Newton iteration.
\begin{proposition}[{\cite[Thm.~7.5.15]{Facchinei+Pang-Finivariineqcomp:03}}]\label{thm:converge}
Let $\Phi \colon \R^n\rightarrow \R^n$ be locally Lipschitz continuous and admit a (strong) LNA scheme
$S$ at $\eta^\ast\in \R^n$ such that $\Phi(\eta^\ast)=0$. If every matrix
$H\in S(\eta^\ast)$ is nonsingular, then the linear Newton method
\begin{equation*}
     \eta^{k+1}= \eta^{k} - H^{-1}_k\Phi(\eta^k) , \mbox{ with }H_k \in S(\eta^k),\quad k=0,1,\ldots,
\end{equation*}
 converges Q-superlinearly (resp. Q-quadratically)
to the solution $\eta^\ast$ provided that $\eta^0$ is sufficiently close to $\eta^\ast$.
\end{proposition}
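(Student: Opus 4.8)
The plan is to follow the classical local-convergence argument for Newton-type iterations, adapting it to the LNA framework of Definition~\ref{def:linear Newton}. Write $e^k=\eta^k-\eta^\ast$ for the error. The proof rests on two ingredients furnished by the LNA hypotheses: a uniform bound on the inverses $H^{-1}$ in a neighborhood of $\eta^\ast$, supplied by the structural conditions (a)--(b), and the first-order approximation estimate supplied by condition (c) (or $(\mathrm{c}')$).

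First I would establish that there exist a neighborhood $\mathcal{N}$ of $\eta^\ast$ and a constant $M>0$ such that every $H\in S(\eta)$ with $\eta\in\mathcal{N}$ is nonsingular and satisfies $\Vert H^{-1}\Vert\le M$. This is a compactness argument exploiting that $S$ is nonempty- and compact-valued and upper semicontinuous at $\eta^\ast$, together with the hypothesis that every $H\in S(\eta^\ast)$ is nonsingular. If the claim failed, there would exist $\eta_j\to\eta^\ast$ and $H_j\in S(\eta_j)$ with $H_j$ singular or $\Vert H_j^{-1}\Vert\to\infty$; upper semicontinuity and compactness would let me extract a convergent subsequence $H_j\to H\in S(\eta^\ast)$, and $H$ would then be singular, contradicting the assumption. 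I expect this step to be the main technical obstacle, since it is where conditions (a) and (b) of the LNA scheme---as opposed to the more visible approximation condition (c)---are genuinely used.

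With the uniform bound in hand, I would invoke condition (c): given any $\varepsilon>0$ there is $\delta>0$ so that for $0<\Vert e\Vert\le\delta$ and $H\in S(\eta)$,
\begin{equation*}
\Vert \Phi(\eta)-H e\Vert = \Vert \Phi(\eta)+H(\eta^\ast-\eta)-\Phi(\eta^\ast)\Vert\le \varepsilon\Vert e\Vert,
\end{equation*}
using $\Phi(\eta^\ast)=0$. The Newton update gives the exact identity
\begin{equation*}
e^{k+1}=e^k-H_k^{-1}\Phi(\eta^k)=-H_k^{-1}\bigl(\Phi(\eta^k)-H_k e^k\bigr),
\end{equation*}
so combining it with the uniform bound yields $\Vert e^{k+1}\Vert\le M\varepsilon\Vert e^k\Vert$ whenever $\eta^k$ lies in $\mathcal{N}$ with $\Vert e^k\Vert\le\delta$.

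Finally I would close the induction: fix $\varepsilon$ with $M\varepsilon\le\tfrac12$ and choose $\bar\delta>0$ small enough that $B(\eta^\ast,\bar\delta)\subset\mathcal{N}$ and $\bar\delta\le\delta$. For any $\eta^0\in B(\eta^\ast,\bar\delta)$ the bound $\Vert e^{k+1}\Vert\le\tfrac12\Vert e^k\Vert$ keeps every iterate in the ball, so each $H_k$ is nonsingular, the iteration is well defined, and $\eta^k\to\eta^\ast$. For the rate, because $\varepsilon$ may be taken arbitrarily small as $\eta^k\to\eta^\ast$, the error ratio satisfies $\Vert e^{k+1}\Vert/\Vert e^k\Vert\le M\,\Vert\Phi(\eta^k)-H_k e^k\Vert/\Vert e^k\Vert\to0$, which is Q-superlinear convergence. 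If $S$ is a strong LNA scheme, condition $(\mathrm{c}')$ upgrades the estimate to $\Vert\Phi(\eta^k)-H_k e^k\Vert\le C\Vert e^k\Vert^2$ near $\eta^\ast$, giving $\Vert e^{k+1}\Vert\le MC\Vert e^k\Vert^2$ and hence Q-quadratic convergence.
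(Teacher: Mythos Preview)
Your argument is correct and follows the standard local-convergence proof for Newton-type iterations under an LNA scheme. Note, however, that the paper does not supply its own proof of this proposition: it is simply quoted from \cite[Thm.~7.5.15]{Facchinei+Pang-Finivariineqcomp:03} and used as a black box in the proof of Theorem~\ref{thm:converge_alg2}~(iii). Your write-up is essentially the textbook argument one would expect to find in that reference, so there is nothing substantive to compare.
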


\begin{proof}[The proof of Theorem~\ref{thm:converge_alg2} (iii)]
Theorem~\ref{thm:converge_alg2} (ii) implies that Algorithm~\ref{alg:2} eventually accepts the unit step length in the line search procedure, i.e., there exists $\ell^\prime$ such that
\begin{equation*}
  \xi_{\ell+1} = \xi_\ell - V_\ell^{-1}\nabla \psi(\xi_\ell),\quad V_\ell\in T(\xi_\ell),\quad  \forall \ell \ge \ell^\prime.
\end{equation*}
By Proposition~\ref{prop:LNA}, we know that $T(\xi)$ is a (strong) LNA of $\nabla \psi$ at $\xi^\ast$. Thus,
Theorem~\ref{thm:converge_alg2} (iii) follows from Proposition~\ref{thm:converge}.
\end{proof}

\subsection{Implementation of Algorithm~\ref{alg:2}}
At the end of this section, we illustrate the implementation of Algorithm~\ref{alg:2} using
the $\ell^1$-TV regularization as an example. We only describe the construction of the gradient $\nabla \psi(\xi)$ and the linear Newton approximation scheme $T(\xi)$.
\subsubsection{$\ell^1$\textendash tv regularization}
Let $F$ be an image contaminated by salt-and-pepper noise. Let $y \in \R^{n^2}$ be a column vector collecting the columns of $F$, i.e.,
\begin{equation*}
  y_{i+n(j-1)} = F_{i,j},\quad  1\le i \le n ,  1\le j\le n.
\end{equation*}
Our object is to remove noise from the image $F$. It can be accomplished by minimizing the function defined by
\begin{equation*}
  \min_{u}\alpha \Vert u - y\Vert_1 +  \Vert \nabla u\Vert_{2,1},
\end{equation*}
where $\Vert z\Vert_{2,1}$ for $z\in\R^{2n^2}$ denotes the norm $\Vert z\Vert_{2,1} = \sum_{i=1}^{n^2}\sqrt{z_i^2 +z_{i+n^2}^2}$,
$\alpha$ is the regularization parameter, and $\nabla$ denotes the (discretized) gradient matrix $ \nabla = [D_1^{\top},D_2^{\top}]^{\top} \in \R^{2n^2\times n^2}$. Here, $D_1 = I_n\otimes D$ and $D_2 = D\otimes I_n$ with $D$ being the one-dimensional finite difference matrix with periodic boundary condition and $I_n$ being the identity matrix with $n\times n$.

The problem is written in the form of \eqref{min} by setting
\begin{equation*}
  f(u)=0,\quad \phi(v,z) = \alpha\Vert v-y\Vert_1+\Vert z \Vert_{2,1},
  \quad \mbox{and}\quad
E=\begin{bmatrix}
I_{n^2} \\ \nabla
\end{bmatrix}.
\end{equation*}
The proximal mappings of the function $\phi$ are composed of the proximal mappings of  $\ell^1$ norm transported by $y$ and $\Vert \cdot\Vert_{2,1}$ norm;
\begin{equation*}
  \prox{\phic}(v,z) = \begin{bmatrix}
    \prox{\frac{\alpha}{c}\Vert\cdot-y\Vert_1}(v)\\
    \prox{\Vert \cdot\Vert_{2,1}/c}(z)
  \end{bmatrix}.
\end{equation*}
Now let $u_k\in \R^{n^2}$, $\lambda_k$ be the $k$-th iterates of the
outer iteration, and $\xi_\ell$ be the current solution of the inner iteration. Here, $\lambda_k=(\lambda^{(1)}_k,\lambda^{(2)}_k)$, $\lambda^{(1)}_k\in \R^{n^2}$ is the Lagrange multiplier for $\prox{\frac{\alpha}{c}\Vert\cdot-y\Vert_1}(v)$, and $\lambda^{(2)}_k \in \R^{2n^2}$ is that for $\prox{\Vert \cdot\Vert_{2,1}/c}(z)$. The building blocks for determining the direction $d_\ell$ are:
\begin{align*}
  \nabla \psi(\xi_\ell) &= c_kE^{\top}(E \xi_{\ell} +{\lambda_k}/{c_k} - \prox{\phi/{c_k}}(E \xi_{\ell} + {\lambda_k}/{c_k})) + c^{-1}_k(\xi_{\ell} - u_k), \\
   V_\ell &=  c_k E^{\top} (I_{3n^2}-G)E + c^{-1}_k I_{n^2}.
\end{align*}
where $G \in \partial_B( \prox{\phic})(v,z)$ with $v =\xi_\ell +\lambda^{(1)}_k/c_k$ and $z =\nabla \xi_\ell +\lambda^{(2)}_k/c_k$.
Below, we provide closed forms of $\prox{\phic}(v,z)$ and $G$.

The proximal mapping $\prox{\frac{\alpha}{c}\Vert\cdot\Vert_1}(z)$ for $z\in\R^n$ is the well-known soft-thresholding operator
\begin{equation*}
  \prox{\frac{\alpha}{c}\Vert\cdot\Vert_1}(z) = [\prox{\frac{ \alpha}{c}\vert \cdot \vert}(z_1), \ldots,\prox{\frac{ \alpha}{c}\vert \cdot \vert}(z_n)]^{\top}.
\end{equation*}
Here, $\prox{\frac{ \alpha}{c}\vert \cdot \vert}(s) = \max(s-\tfrac{ \alpha}{c} ,\min(s+\tfrac{ \alpha}{c} ,0))$ for $s\in\R$.
By \eqref{composition}, we have
$\prox{\frac{\alpha}{c}\Vert\cdot-y\Vert_1}(z) = \prox{\frac{\alpha}{c}\Vert\cdot\Vert_1}(z-y)+y \in \R^n$.
A limiting Jacobian $G_1\in \partial_B(\prox{\frac{\alpha}{c}\Vert\cdot-y\Vert_1})(z)$ is diagonal matrix given by
\begin{equation*}
[G_1]_{j,j} =\left\{
\begin{array}{ll}
1 &  \mbox{ if } \vert z_j-y_j\vert  > \frac{ \alpha}{c} ,\\
\{0,1\} & \mbox{ if } \vert z_j -y_j\vert= \frac{ \alpha}{c}, \\
0 & \mbox{ otherwise. }
\end{array} \right.
\end{equation*}
On the other hand, the proximal mapping $\prox{\frac{\Vert \cdot \Vert_{2,1}}{c}}(z)$  is block-separable, and its $i$-th and $(i+n^2)$-th components ($i=1,\ldots,n^2$) are given by
\begin{equation*}
([\prox{\frac{\Vert \cdot \Vert_{2,1}}{c}}(z)]_{i},[\prox{\frac{\Vert \cdot \Vert_{2,1}}{c}}(z)]_{i+n^2}) = \left\{
\begin{array}{ll}
(z_{i}-\frac{z_{i}}{c r_i},z_{i+n^2}-\frac{z_{i+n^2}}{c r_i})& \mbox{ if }  c r_i \ge 1, \\[5pt]
0 & \mbox{ if } c r_i < 1, \\
\end{array}\right.
\end{equation*}
where $r_i=\sqrt{z_{i}^2 + z_{i+n^2}^2}$.  A limiting Jacobian $G_2\in\partial_B
(\prox{\frac{\Vert \cdot \Vert_{2,1}}{c}})(z)$ for $z\in \R^{2n^2}$ is composed of four diagonal matrices
\begin{align*}
G_2=\begin{bmatrix}
  G_{11} & G_{12} \\ G_{21} & G_{22}
\end{bmatrix}
\quad \mbox{with}\quad
\begin{array}{ll}
G_{11} = \mbox{diag}(\{g_{i,i}\}_{i=1}^{n^2}),   &  G_{12}=\mbox{diag}(\{g_{i,n^2+i}\}_{i=1}^{n^2})  \\
G_{21} = \mbox{diag}(\{g_{n^2+i,i}\}_{i=1}^{n^2}) ,&G_{22}=\mbox{diag}(\{g_{n^2+i,n^2+i}\}_{i=1}^{n^2})
\end{array}
\end{align*}
where the $(i,i), (i,i+n^2),(i+n^2,i),(i+n^2,i+n^2)$ components of $G\in \R^{2n^2\times2n^2}$ are given by
\begin{align*}
\begin{bmatrix}
g_{i,i} & g_{i,i+n^2}\\
g_{i+n^2,i} & g_{i+n^2,i+n^2}
\end{bmatrix}
= \left\{\begin{array}{cc}
\displaystyle I_2- \frac{1}{c r_i^{3}}
\begin{bmatrix}
z_{i+n^2}^2& -z_iz_{i+n^2} \\
-z_iz_{i+n^2} & z_i^2
\end{bmatrix} & \mbox{ if } c r_i > 1 \\[10pt]
0  & \mbox{ if } c r_i< 1,
\end{array}\right.
\end{align*}
and any of these elements if $c r_i = 1$ (see, e.g., \cite[Sect.~5.2.3]{Patrinos+StellaETAL-ForwtrunNewtmeth:14}).
Consequently, we have
\begin{align*}
  \nabla \psi(\xi_\ell)
  &=c_k\left[z^{(1)}_\ell -\prox{\frac{\alpha}{c_k}\Vert \cdot \Vert_{1}}(z_\ell^{(1)})
    + \nabla^{\top}(z^{(2)}_\ell- \prox{\frac{\Vert \cdot \Vert_{2,1}}{{c_k}}}(z^{(2)}_\ell)\right]+ c^{-1}_k(\xi_{\ell} - u_k), \\
   V_\ell
   & = c_k( (I_{n^2}-G_1) + \nabla^{\top} (I_{2n^2}-G_2)\nabla) + c^{-1}_k I_{n^2},
\end{align*}
where $z_\ell^{(1)} = \xi_\ell+\lambda^{(1)}_k/c_k$ and $z_\ell^{(2)} = \nabla \xi_\ell+\lambda^{(2)}_k/c_k$.
\section*{Conclusion}
In this paper, we have developed the proximal method of multipliers for a class of nonsmooth convex optimization problems
arising in various application domains and a Newton-type method for solving the subproblems.
We provided a rigorous proof on the global convergence of the Newton method with line search and on the rate of the convergence.

To make the proposed framework applicable to real-word applications,
further studies are needed on several important issues, including the
development of efficient solvers for the (possibly) large linear
system (Newton system) and providing parameter choice rules for $c_k$
and $\epsilon_k$, which may have a great influence on the numerical performance of the algorithm. These issues will be investigated in future work.


\renewcommand{\theequation}{A.\arabic{equation}}
\setcounter{equation}{0}
\section*{Appendix}

\renewcommand{\theequation}{A.\arabic{equation}}
\setcounter{equation}{0}
\subsection*{A. Augmented Lagrangian method (method of multipliers)}
This section provides a brief review of the augmented Lagrangian method. 

\noindent
\textbf{Augmented Lagrangian by Hestenes and Powell}\\
Initially, the method of multipliers was independently proposed by Hestenes \cite{Hestenes-Multgradmeth:69} and Powell \cite{Powell-methnonlconsmini:69} for solving nonlinear programming problems with equality constraints
\begin{equation*}
  \min_{x\in \R^n} f(x) \quad \mbox{subject to}\quad h(x) = 0,
\end{equation*}
where $f$ and $h$ are smooth functions. The method of multipliers performs a sequence of minimization problems
\begin{equation*}
  x_{k} =\arg \min_{x} \left\{ L_{c_k}(x,\lambda_k) = f(x) + (\lambda_k, h(x)) + \frac{c_k}{2}\Vert h(x) \Vert^2 \right\},
\end{equation*}
followed by the multiplier update
\begin{equation*}
  \lambda_{k+1} = \lambda_k + c_k \nabla_\lambda L_{c_k}(x_k,\lambda_k)=\lambda_k + c_k h(x_k).
\end{equation*}
The sequence $\{c_k\}$ may be either fixed a priori or adaptively increased during the iteration.

\noindent
\textbf{Augmented Lagrangian by Rockafellar}\\
This approach was generalized to nonlinear programming problems with equality and inequality constraints over a convex set $C\subset \R^n$
\begin{equation}\label{ineq}
  \min_{x\in C} f(x) \quad \mbox{subject to}\quad h(x)=0,\quad g(x) \le 0,
\end{equation}
by Rockafellar \cite{Rockafellar-dualapprsolvnonl:73,Rockafellar-multmethHestPowe:73}.  Hereafter, we
focus on the inequality constraint only to keep the presentation simpler.
The augmented Lagrangian, which is called the \textit{penalty Lagrangian} in \cite{Rockafellar-dualapprsolvnonl:73},  for the problem is derived by casting the inequality problem into the equality-constrained problem using a slack variable $v$
\begin{equation*}
  \min_{x\in C,v\in \R^m} f(x)+ \phi(v)  \quad \mbox{subject to}\quad g(x) - v = 0
\end{equation*}
where $\phi(v)$ is the indicator function of the set $K=\{v \in \R^m \mid v_i \le 0, \forall i \}$. Then, define an augmented  Lagrangian $\mathcal{L}_{c}(x,v,\lambda)$ by
\begin{align*}
\mathcal{L}_{c}(x,v,\lambda)&= f(x) + \phi(v)+ (\lambda, g(x)-v) + \frac{c}{2}\Vert g(x)-v\Vert^2.
\end{align*}
The augmented Lagrangian $L_c(x,\lambda)$ for the problem \eqref{ineq} is defined by eliminating the slack variable from $\mathcal{L}_c(x,v,\lambda)$ by marginalization with respect to $v$
\begin{align*}
  \AL{x}{\lambda}{c} &:=\min_{v\in \R^m}\mathcal{L}_{c}(x,v,\lambda)\\
  &=f(x) + \min_{v} \left(\phi(v)+ \frac{c}{2}\Vert g(x)+\lambda/c-v\Vert^2\right) - \frac{\Vert \lambda\Vert^2}{2c}\\
& = f(x) + \phi_c(g(x)+\lambda/c) - \frac{\Vert \lambda\Vert^2}{2c}.
\end{align*}
The proximal operator for the indicator function $\phi$ is the projection onto the convex set $K$,
and it is explicitly given as $ \prox{\phic} = \min(z,0)$,
and thus, the Moreau envelope $\phi_c(v)$ is written as
\begin{align*}
 \phi_c(v)= \phi(v(g(x)+\lambda/c)) + \frac{c}{2}\Vert g(x)+\lambda/c - \min(g(x)+\lambda/c,0)\Vert^2=\frac{c}{2}\Vert \max( g(x)+\lambda/c,0)\Vert^2.
\end{align*}
Thus, we obtain
\begin{align*}
  \AL{x}{\lambda}{c} =f(x)+ \frac{c}{2}\Vert \max( g(x)+\lambda/c,0)\Vert^2-\frac{\Vert\lambda\Vert^2}{2c}.
\end{align*}
The method of multipliers is then written as
\begin{align}
  x_{k+1} &= \arg\min_{x\in C} \AL{x}{\lambda_k}{c_k}, \label{step1_alm_ineq} \\
  \lambda_{k+1}&= \lambda_k + c_k\nabla_\lambda \AL{x_{k+1}}{\lambda_k}{c_k} \nonumber \\
  &= \lambda_k + c_k(g(x_{k+1})-\min(g(x_{k+1})+\lambda_k/c_k,0)) \nonumber \\
  & = \max(\lambda_k + c_k g(x_{k+1}),0). \nonumber
\end{align}
In practical situations, the minimization \eqref{step1_alm_ineq} can be performed only inexactly.
Rockafellar's approach \cite{Rockafellar-MonooperaugmLagr:78} allows
one to evaluate $x_{k+1}$ approximately provided that a certain condition on the accuracy of the approximation is fulfilled. In addition, he showed that increasing the parameter $c_k$ in every step improves the rate of convergence of the algorithm.
The method of multipliers for the equality and inequality constraints is studied in extensive detail by Bertsekas \cite{MR690767}.

\noindent
\textbf{Augmented Lagrangian by Glowinski-Marroco and Gabay-Mericer}\\
Glowinski and Marroco \cite{glowinski1975} carried out a similar approach to the problem~\eqref{min} by transforming the problem into the equality-constrained convex optimization problem
\begin{equation*}
  \min_{x\in {\R^n},v\in {\R^m}} f(x) + \phi(v) \quad \mbox{ subject to } \quad E x = v.
\end{equation*}
An augmented Lagrangian $\mathcal{L}_c(x,v,\lambda)$ is defined by
\begin{equation}\label{ag_glowinski}
\mathcal{L}_c(x,v,\lambda) = f(x) + \phi(v) + (\lambda,E x - v) + \frac{c}{2}\Vert E x - v\Vert^2.
\end{equation}
The method of multipliers is written as
\begin{align*}
  (x_{k+1},v_{k+1}) &= \arg \min_{x,v} \mathcal{L}_c(x,v,\lambda_k), \\
  \lambda_{k+1} & = \lambda_k + c\nabla_\lambda \mathcal{L}_c(x_{k+1},v_{k+1},\lambda_k)=\lambda_k + c(Ex_{k+1} - v_{k+1}).
\end{align*}
Gabay and Mericer \cite{Gabay+Mercier-dualalgosolunonl:76a} proposed decoupling the simultaneous minimization over $x$ and $v$ into the minimization problems with respect to $x$ followed by $v$;
\begin{align*}
  x_{k+1} &= \arg \min_{x} \mathcal{L}_c(x,v_k,\lambda_k), \\
v_{k+1} &= \arg \min_{v} \mathcal{L}_c(x_{k+1},v,\lambda_k) = \prox{\phic}(Ex_{k+1}+\lambda_k/c), \\
  \lambda_{k+1} & = \lambda_k + c\nabla_\lambda \mathcal{L}_c(x_{k+1},v_{k+1},\lambda_k)=\lambda_k + c(Ex_{k+1} - v_{k+1}).
\end{align*}
The algorithm is called \textit{alternating direction method of multipliers (ADMM)}, which is frequently used in a wide range of scientific communities, such as numerical partial differential equations \cite{Fortin+Glowinski-AugmLagrmeth:83} and statistical learning \cite{Boyd+ParikhETAL-DistOptiStatLear:11}, to name a few.

\noindent
\textbf{Augmented Lagrangian by Fortin}\\
Fortin \cite{Fortin-Minisomenon-func:75} proposed an alternative Lagrangian obtained by eliminating $v$ from
\eqref{ag_glowinski}.
\begin{align}\label{ag_fortin}
\AL{x}{\lambda}{c}&=\min_{v}\mathcal{L}_c(x,v,\lambda)=f(x) + \min_{v} \left(\phi(v)+ \frac{c}{2}\Vert Ex+\lambda/c-v\Vert^2\right) - \frac{\Vert \lambda\Vert^2}{2c}.
\end{align}
Using the Lagrangian $L_c$, he developed an algorithm for solving the problem \eqref{min}.
By following exactly the same argument for the inequality-constrained problem,
one obtains the expression of $L_c$ (see \eqref{ag})
and the method of multipliers, which is given as
\begin{align}
  x_{k+1} &= \arg\min_{x\in {\R^n}} \AL{x}{\lambda_k}{c}, \label{step1_alm_fortin} \\
  \lambda_{k+1}&= \lambda_k + c\nabla_\lambda \AL{x_{k+1}}{\lambda_k}{c}= \lambda_k + c(Ex_{k+1}-\prox{\phic}(Ex_{k+1} +\lambda_k/c)) \nonumber\\
  & = \prox{c\phi^\ast}(\lambda_k + c Ex_{k+1}). \label{step2-2_alm_fortin}
\end{align}
The equality in \eqref{step2-2_alm_fortin} follows from the Moreau decomposition, $\prox{\phic}(z)+c^{-1}\prox{c\phi^\ast}(cz)=z$.

 For a particular case where $\phi$ is the indicator function of the convex set $K=\R^n_{-}=\{x\in \R^n\mid x\le 0\}$,
 the proximal mapping is the projection onto the set $K$, i.e., $\prox{\phic}(z) = \min(z,0)$. Therefore,
 the augmented Lagrangian \eqref{ag} is identical to \eqref{ag_ineq} with $g(x)=Ex$.
 This implies that the augmented Lagrangian \eqref{ag_fortin}
 is the legitimate generalization of Rockafellar's augmented Lagrangian.

 The method was initially developed for the unconstrained problem \eqref{min}, and it was extended to the problem with an abstract constraint $x\in C$ in \cite{Ito+Kunisch-AugmLagrmethnons:00}, where the convergence of the algorithm (with the presence of the constraint $x\in C$) is established under the strict assumption that the exact minimization for $x_{k+1}$ is available.
\renewcommand{\theequation}{B.\arabic{equation}}
\setcounter{equation}{0}
\subsection*{B. Dual-Augmented Lagrangian (DAL)}
The dual-augmented Lagrangian (DAL) algorithm proposed in \cite{Tomioka+Sugiyama-DualLagrMethEffi:09} and analyzed in-depth in \cite{Tomioka+SuzukiETAL-SupeConvDualAugm:11} is an efficient numerical solver for the sparse optimization problem of estimating an $n$-dimensional parameter vector $x$ from $m$ training examples
\begin{equation}\label{min_dal}
f_\ell(Ax) + \phi(x).
\end{equation}
Here, $f_\ell$ is a $1/\nu$ smooth loss function, e.g., the square
loss, the logistic loss, the hyperbolic secant likelihood, and
multiclass logit (see Table 1 in
\cite{Tomioka+SuzukiETAL-SupeConvDualAugm:11}), and $A\in \R^{m\times
  n}$ is a design matrix. The prior $\phi$ is a proper, convex
function introducing sparseness into the solution; such function
includes the $\ell_1$ norm for sparse estimation, the group norm for grouped lasso and the trace norm for a low rank matrix estimation (see Table 2 in \cite{Tomioka+SuzukiETAL-SupeConvDualAugm:11}).
Each of the convex conjugate functions of these priors listed above is an indicator function of a convex set, and hence, the proximal operator of $\phi^\ast$ is the projection onto the convex set; hence, it holds that
  \begin{equation}\label{ass_dal}
   \phi^\ast( \prox{c\phi^\ast}(z) ) = 0,\quad \forall z\in {\R^m}, \forall c>0.
  \end{equation}

The DAL algorithm is the method of multipliers applied to the Fenchel dual problem
 \begin{equation}\label{min_dual}
   \min_\alpha f^{\ast}_{\ell}(-\alpha) + \phi^\ast(A^\top \alpha),
 \end{equation}
and the algorithm is described as follows:
\begin{align}
\alpha_k & \approx \arg \min_{\alpha}  f^{\ast}_{\ell}(-\alpha) + 1/(2c_k)\Vert \mbox{prox}_{c_k\phi}(w_k+c_kA^\top \alpha)\Vert^2, \label{step1_dal} \\
w_{k+1} & = \mbox{prox}_{c_k\phi}(w_k+c_kA^\top \alpha_k), \label{step2_dal}
\end{align}
where the approximation $\alpha_k$ must satisfy the following criterion
\begin{equation*}
\Vert \nabla_\alpha \AL{\alpha_k}{w_k}{c_k} \Vert \le \sqrt{\nu/c_k}\Vert w_{k+1}-w_k\Vert.
\end{equation*}
Here, $c_k$ is an increasing positive parameter prescribed prior to the execution of the algorithm, e.g., $c_k = 2^k$.
It is shown in \cite{Tomioka+SuzukiETAL-SupeConvDualAugm:11} that the sequence $\{w_k\}$ generated by DAL converges to a solution of \eqref{min_dal}.

The derivation of \eqref{step1_dal}, \eqref{step2_dal} presented in \cite{Tomioka+Sugiyama-DualLagrMethEffi:09} is similar to the argument deriving $L_c$ in \eqref{ag_fortin} from $\mathcal{L}_c$ in \eqref{ag_glowinski}.
Here, for the reader's convenience, we provide a derivation of DAL.
Let $\AL{\alpha}{w}{c}$ be the augmented Lagrangian of the dual problem \eqref{min_dual}
\begin{equation}\label{ag_dual}
\AL{\alpha}{w}{c}= f^\ast_{\ell}(-\alpha) + \phi_c^\ast(A^\top \alpha + w/c) - \frac{\Vert w\Vert^2}{2c}.
\end{equation}
From the Moreau decomposition $\prox{c\phi}(z)+c\prox{\phi^\ast/c}(z/c)=z$ and the assumption \eqref{ass_dal}, we obtain
\begin{align*}
\phi^\ast_c(z)  &=\phi^\ast(\prox{\phi^\ast/c}(z)) + \frac{c}{2}\Vert \prox{\phi^\ast/c}(z)-z\Vert^2 \\
&=\frac{c}{2}\Vert \prox{\phi^\ast/c}(z)-z\Vert^2=\frac{c}{2}\Vert \prox{\phi^\ast/c}(cz/c)-cz/c\Vert^2=\frac{1}{2c}\Vert \prox{c\phi}(cz)\Vert^2,
\end{align*}
for all $z\in {\R^m}$. Thus, we see that the minimization step \eqref{step1_alm_fortin} is written as
\begin{equation*}
\arg\min_{\alpha} L_c(\alpha,w_k)=\arg\min_{\alpha} f^\ast_{\ell}(-\alpha) + \frac{1}{2c_k}\Vert \prox{c\phi}(c_k(A^\top \alpha + w_k/c_k))\Vert^2,
\end{equation*}
which indicates that DAL is the augmented Lagrangian method by \eqref{ag_dual}. From this perspective, DAL can be interpreted as a special instance of Fortin's augmented Lagrangian method.

\renewcommand{\theequation}{C.\arabic{equation}}
\setcounter{equation}{0}
\subsection*{C. Forward-Backward Newton Method}
Let us consider the minimization problem of a composite function, which is a special instance of \eqref{min} with $E=I$.
\begin{equation}\label{min2}
\min_{x\in \R^n} F(x):=f(x)   +\phi(x).
\end{equation}
The optimality system for this problem is written as (cf., e.g.,  \cite[Ch.~4]{Ito+Kunisch-AugmLagrmethnons:00}, \cite{Jin+Takeuchi-Lagroptisystclas:16})
\begin{equation*}
  \nabla f(x) + \lambda=0\quad \mbox{and}\quad x-\prox{\phic}(x+\lambda/c)=0.
\end{equation*}
Eliminating $\lambda$ from the optimality system yields the well-known optimality condition for \eqref{min2}:
\begin{equation}\label{fbn_opt}
 x-\prox{\phic}(x-\nabla f(x)/c)=0.
\end{equation}
The forward-backward Newton method proposed in \cite{parikh2013proximal} and further investigated in \cite{Patrinos+StellaETAL-ForwtrunNewtmeth:14} is a nonsmooth Newton method applied to the equation \eqref{fbn_opt}.
\begin{align*}
\left(I - G_k(I -c^{-1}\nabla^2f(x_k))\right)d_k&= -(x_k - \prox{\phic}(z_k)), \\
x_{k+1} &= x_k + \tau_k d_k.
\end{align*}
Here, $x_k$ is the current iterate, $\tau_k>0$ is a step size to be selected appropriately, $z_k= x_k -\nabla f(x_k)/c$, and $G_k$ is a generalized Jacobian of ${\prox{\phic}}(z)$ at $z=z_k$.
If $c$ is larger than the Lipschitz constant $L_f$ of the gradient $\nabla f$, then
the Newton system is nonsingular and the direction $d_k$ is uniquely determined, which is shown to be a descent direction of the function defined using the augmented Lagrangian
\begin{equation*}
F_c(x):=\AL{x}{-\nabla f(x)/c}{c} = f(x) + \phi_c(x -\nabla f(x)/c)-1/(2c)\Vert \nabla f(x)\Vert^2.
\end{equation*}
The function $F_c$, which is called \textit{forward-backward envelope (FBE)} in \cite{Patrinos+StellaETAL-ForwtrunNewtmeth:14}, is used as a merit function for determining the
step length $\tau_k$ in the Armijo line search algorithm.

To avoid the possible ill-conditioning of the Hessian $\nabla^2 f(x)$, a regularization term $\delta_k I$, where $\delta_k$ is an appropriately chosen positive constant, is performed, i.e., the direction $d_k$ is determined by
 \begin{align}\label{pn2}
\left(I - G_k(I -c^{-1}\nabla^2f(x_k)) + \delta_k I\right)d&= -(x_k - \prox{\phic}(z_k))
\end{align}
The article  \cite{Patrinos+StellaETAL-ForwtrunNewtmeth:14} presents the proof of the convergence of the algorithm (\eqref{pn2} followed by the line search), the extensive numerical tests and a list of the generalized Jacobian of the proximal operator of directly relevant to the sparse optimization.
\begin{remark}
The function $F_c$ seems to have appeared for the first time in the monograph \cite[p.~709, p.~787]{Facchinei+Pang-Finivariineqcomp:03} for the minimization problem of a smooth convex function $f$ over a convex set $K$ (hence, the proximal operator is the projection on a convex set $K$), where the direction $d_k$ is determined by $d_k = x_k -\Pi_K(x_k-\nabla f(x_k))$ and a step size $\tau_k$ is selected by the Armijo line search applied to the function $F_c$.
The work \cite{Patrinos+StellaETAL-ForwtrunNewtmeth:14} can be viewed as the generalization: the projection $\Pi_K$ is replaced by  the proximal mapping $\prox{\phi/c}$, and the direction $d_k$ is determined by the Newton-like method \eqref{pn2}.
\end{remark}


\begin{thebibliography}{10}
\bibitem{Boyd+ParikhETAL-DistOptiStatLear:11}
{S.~Boyd, N.~Parikh, E.~Chu, B.~Peleato, and J.~Eckstein}, {\em Distributed
  optimization and statistical learning via the alternating direction method of
  multipliers}, Found. Trends Mach. Learn., 3 (2011), pp.~1--122.

\bibitem{Eckstein+Bertsekas-Dougsplimethprox:92a}
{J.~Eckstein and D.~P. Bertsekas}, {\em On the {D}ouglas-{R}achford
  splitting method and the proximal point algorithm for maximal monotone
  operators}, Math. Programming, 55 (1992), pp.~293--318.

\bibitem{Ito+Kunisch-AugmLagrmethnons:00}
{K.~Ito and K.~Kunisch}, {\em {A}ugmented {L}agrangian methods for
  nonsmooth, convex optimization in {H}ilbert spaces}, Nonlin. Anal. Ser. A
  Theory Methods, 41 (2000), pp.~591--616.

  \bibitem{Fortin+Glowinski-AugmLagrmeth:83}
{M.~Fortin and R.~Glowinski}, {\em Augmented {L}agrangian methods}, vol.~15
  of Studies in Mathematics and its Applications, North-Holland Publishing Co.,
  Amsterdam, 1983.
\newblock Applications to the numerical solution of boundary value problems,
  Translated from the French by B. Hunt and D. C. Spicer.

\bibitem{Tomioka+Sugiyama-DualLagrMethEffi:09}
{R.~Tomioka and M.~Sugiyama}, {\em Dual-augmented lagrangian method for
  efficient sparse reconstruction}, IEEE Signal Processing Letters, 16 (2009),
  pp.~1067--1070.

  \bibitem{Tomioka+SuzukiETAL-SupeConvDualAugm:11}
{R.~Tomioka, T.~Suzuki, and M.~Sugiyama}, {\em Super-{L}inear {C}onvergence
  of {D}ual {A}ugmented {L}agrangian {A}lgorithm for {S}parsity {R}egularized
  {E}stimation}, J. Mach. Learn. Res., 12 (2011), pp.~1537--1586.

 \bibitem{Hintermueller+ItoETAL-primactistrasemi:02}
{M.~Hinterm{\"u}ller, K.~Ito, and K.~Kunisch}, {\em The primal-dual active
  set strategy as a semismooth {N}ewton method}, SIAM J. Optim., 13 (2002),
  pp.~865--888 (2003).

  \bibitem{Ito+Kunisch-actistrabaseaugm:99}
{K.~Ito and K.~Kunisch}, {\em An active set strategy based on the augmented
  {L}agrangian formulation for image restoration}, ESAIM: Math. Model. Numer.
  Anal., 33 (1999), pp.~1--21.

\bibitem{Rockafellar-MonooperaugmLagr:78}
{R.~T. Rockafellar}, {\em Monotone operators and augmented {L}agrangian
  methods in nonlinear programming}, in Nonlinear programming, 3 ({P}roc.
  {S}ympos., {S}pecial {I}nterest {G}roup {M}ath. {P}rogramming, {U}niv.
  {W}isconsin, {M}adison, {W}is., 1977), Academic Press, New York-London, 1978,
  pp.~1--25.



\bibitem{Rockafellar-Monooperproxpoin:76a}
{R.~T. Rockafellar}, {\em Monotone operators and the proximal point
  algorithm}, SIAM J. Control Optimization, 14 (1976), pp.~877--898.

\bibitem{Fortin-Minisomenon-func:75}
{M.~Fortin}, {\em Minimization of some non-differentiable functionals by
  the augmented {L}agrangian method of {H}estenes and {P}owell}, Appl. Math.
  Optim., 2 (1975), pp.~236--250.


\bibitem{Pang+Qi-Nonsequa:93}
{J.-S. Pang and L.~Q. Qi}, {\em Nonsmooth equations: motivation and
  algorithms}, SIAM J. Optim., 3 (1993), pp.~443--465.

\bibitem{MR690767}
{D.~P. Bertsekas}, {\em Constrained optimization and {L}agrange multiplier
  methods}, Computer Science and Applied Mathematics, Academic Press, Inc.
  [Harcourt Brace Jovanovich, Publishers], New York-London, 1982.

\bibitem{Birgin+Martinez-PracaugmLagrmeth:14}
{ E.~G. Birgin and J.~M. Mart{\'{\i}}nez}, {\em Practical augmented
  {L}agrangian methods for constrained optimization}, vol.~10 of Fundamentals
  of Algorithms, Society for Industrial and Applied Mathematics (SIAM),
  Philadelphia, PA, 2014.

\bibitem{parikh2013proximal}
{N.~Parikh and S.~Boyd}, {\em Proximal algorithms}, Found. Trends Optim., 1
  (2013), pp.~123--231.

\bibitem{Patrinos+StellaETAL-ForwtrunNewtmeth:14}
{P.~Patrinos, L.~Stella, and A.~Bemporad}, {\em Forward-backward truncated
  {N}ewton methods for convex composite optimization}.
\newblock preprint, arXiv:1402.6655v2, 2014.

\bibitem{Kanno-Nonsmechconvopti:11}
{Y.~Kanno}, {\em Nonsmooth mechanics and convex optimization}, CRC Press,
  Boca Raton, FL, 2011.

  \bibitem{Ito+Kunisch-Lagrmultapprvari:08}
\leavevmode\vrule height 2pt depth -1.6pt width 23pt, {\em Lagrange
  {M}ultiplier {A}pproach to {V}ariational {P}roblems and {A}pplications},
  SIAM, Philadelphia, PA, 2008.

\bibitem{Bauschke+Combettes-Convanalmonooper:11}
{H.~H. Bauschke and P.~L. Combettes}, {\em Convex {A}nalysis and {M}onotone
  {O}perator {T}heory in {H}ilbert {S}paces}, Springer, New York, 2011.

\bibitem{Combettes+Wajs-Signrecoproxforw:05}
{P.~L. Combettes and V.~R. Wajs}, {\em {S}ignal recovery by proximal
  forward-backward splitting}, Multiscale Model. Simul., 4 (2005),
  pp.~1168--1200.

\bibitem{Jin+Takeuchi-Lagroptisystclas:16}
{B.~Jin and T.~Takeuchi}, {\em Lagrange optimality system for a class of
  nonsmooth convex optimization}, Optimization, 65 (2016), pp.~1151--1166.

  \bibitem{Luque-Asymconvanalprox:84}
{F.~J. Luque}, {\em Asymptotic convergence analysis of the proximal point
  algorithm}, SIAM J. Control Optim., 22 (1984), pp.~277--293.

  \bibitem{Rockafellar-Monooperassowith:70}
{R.~T. Rockafellar}, {\em Monotone operators associated with
  saddle-functions and minimax problems}, in Nonlinear {F}unctional {A}nalysis
  ({P}roc. {S}ympos. {P}ure {M}ath., {V}ol. {XVIII}, {P}art 1, {C}hicago,
  {I}ll., 1968), Amer. Math. Soc., Providence, R.I., 1970, pp.~241--250.

\bibitem{Evans+Gariepy-Meastheofineprop:92}
{L.~C. Evans and R.~F. Gariepy}, {\em Measure {T}heory and {F}ine
  {P}roperties of {F}unctions}, CRC Press, Boca Raton, FL, 1992.

\bibitem{Imbert-SuppfuncClargene:02}
{C.~Imbert}, {\em Support functions of the {C}larke generalized {J}acobian
  and of its plenary hull}, Nonlinear Anal., 49 (2002), pp.~1111--1125.

\bibitem{Facchinei+Pang-Finivariineqcomp:03}
{F.~Facchinei and J.-S. Pang}, {\em Finite-{D}imensional {V}ariational
  {I}nequalities and {C}omplementarity {P}roblems. {V}ol. {II}},
  Springer-Verlag, New York, 2003.

\bibitem{Hestenes-Multgradmeth:69}
{M.~R. Hestenes}, {\em Multiplier and gradient methods}, J. Optim. Theory
  Appl., 4 (1969), pp.~303--320.

\bibitem{Powell-methnonlconsmini:69}
{M.~J.~D. Powell}, {\em A method for nonlinear constraints in minimization
  problems}, in Optimization ({S}ympos., {U}niv. {K}eele, {K}eele, 1968),
  Academic Press, London, 1969, pp.~283--298.

 \bibitem{Rockafellar-dualapprsolvnonl:73}
{R.~Rockafellar}, {\em A dual approach to solving nonlinear programming
  problems by unconstrained optimization}, Math. Programming, 5 (1973),
  pp.~354--373.

\bibitem{Rockafellar-multmethHestPowe:73}
{R.~Rockafellar}, {\em The multiplier method of {Hestenes and Powell}
  applied to convex programming}, J. Optim. Theory Appl., 12 (1973),
  pp.~555--562.

\bibitem{glowinski1975}
{R.~Glowinski and A.~Marroco}, {\em Sur l'approximation, par
  {\'e}l{\'e}ments finis d'ordre un, et la r{\'e}solution, par
  p{\'e}nalisation-dualit{\'e} d'une classe de probl{\`e}mes de dirichlet non
  lin{\'e}aires}, ESAIM: Math. Model. Numer. Anal., 9 (1975), pp.~41--76.

\bibitem{Gabay+Mercier-dualalgosolunonl:76a}
{D.~Gabay and B.~Mercier}, {\em A dual algorithm for the solution of
  nonlinear variational problems via finite element approximation}, Computers
  $\&$ Mathematics with Applications, 2 (1976), pp.~17 -- 40.




\end{thebibliography}

\end{document}